\renewcommand{\tilde}{\widetilde}
\renewcommand{\phi}{\varphi}
\newtheorem*{main}{Theorem A}
\newtheorem*{mainB}{Theorem B}
\newtheorem*{mainD}{Conjecture C}
\newtheorem*{mainE}{Theorem D}
\numberwithin{equation}{section}
\crefname{ques}{Question}{Questions}
\crefname{defn}{Definition}{Definitions}
\crefname{thm}{Theorem}{Theorems}
\crefname{prop}{Proposition}{Propositions}
\crefname{lem}{Lemma}{Lemmas}
\crefname{cor}{Corollary}{Corollaries}
\crefname{conj}{Conjecture}{Conjectures}
\crefname{section}{Section}{Sections}
\crefname{subsection}{Subsection}{Subsections}
\crefname{eg}{Example}{Examples}
\crefname{figure}{Figure}{Figures}
\crefname{rem}{Remark}{Remarks}
\crefname{rmk}{Remark}{Remarks}
\crefname{equation}{equation}{equation}
\Crefname{ques}{Question}{Questions}
\Crefname{defn}{Definition}{Definitions}
\Crefname{thm}{Theorem}{Theorems}
\Crefname{prop}{Proposition}{Propositions}
\Crefname{lem}{Lemma}{Lemmas}
\Crefname{cor}{Corollary}{Corollaries}
\Crefname{conj}{Conjecture}{Conjectures}
\Crefname{section}{Section}{Sections}
\Crefname{subsection}{Subsection}{Subsections}
\Crefname{eg}{Example}{Examples}
\Crefname{figure}{Figure}{Figures}
\Crefname{rem}{Remark}{Remarks}
\Crefname{rmk}{Remark}{Remarks}
\newtheorem{theorem}{Theorem}[section]
\newtheorem{lem}[theorem]{Lemma}
\newtheorem{cor}[theorem]{Corollary}
\newtheorem{rem}[theorem]{Remark}
\newtheorem{rems}[theorem]{Remarks}
\newtheorem{prop}[theorem]{Proposition}
\newtheorem{example}[theorem]{Example}
\newtheorem{conj}[theorem]{Conjecture}
\newcommand{\benum}{\begin{enumerate}}
\newcommand{\ennum}{\end{enumerate}}
\newcommand{\N}{\mathbb{N}}
\newcommand{\Z}{\mathbb{Z}}
\newcommand{\C}{\mathbb{C}}
\newcommand{\epps}{\varepsilon}
\newcommand{\gen}[1]{\mbox{$\langle #1 \rangle$}}
\newcommand{\la}{\lambda}
\newcommand{\cla}{[\la]} 
\newcommand{\cmu}{[\mu]}
\newcommand{\al}{\alpha}
\newcommand{\1}{\mbox{\bf 1}}
\newcommand{\down}{{\downarrow}}
\newcommand{\dstyle}{\displaystyle}
\newcommand{\Irr}{{\rm Irr}}
\newcommand{\IBr}{{\rm IBr}}
\DeclareMathOperator{\id}{id}
\DeclareMathOperator{\ord}{ord}
\DeclareMathOperator{\Rem}{Rem}
\newcommand{\sts}{\mathsf{s}}  
\newcommand{\stt}{\mathsf{t}}  
\title[Splitting Kronecker squares]{
Splitting Kronecker squares, \\
2-decomposition numbers, \\
Catalan Combinatorics, \\
and the Saxl conjecture\\[4ex]
}
\author{Christine Bessenrodt}
\author{Chris  Bowman}
\address{
Department of Mathematics,
University of York, Heslington, York, YO10 5DD, UK
}
\begin{document}

\maketitle

\vspace{-0.1cm} 
$$\text{\scriptsize \em  It is with much sadness that we mark the passing of Christine Bessenrodt,   a great friend and mathematician.} $$

\begin{abstract}
This paper 
concerns the 
   symmetric and anti-symmetric Kronecker products of  
characters of the symmetric groups.  
We   provide new closed formulas for decomposing these products, 
 unexpected  connections with 2-modular decomposition numbers, Catalan combinatorics, 
and a refinement  of  the famous  Saxl conjecture.      
\end{abstract}
  
\maketitle
\thispagestyle{empty}

    \section{Introduction}\label{sec:intro}
 The Kronecker problem asks for  an understanding of the
  tensor products of   characters of  symmetric groups. 
 Given $ \la \vdash n$ a partition of $n$, we let $[\la]$ denote the corresponding simple
   $\mathbb C S_n$-character.
 The Kronecker coefficients $g(\lambda,\mu,\nu) $ encode  the multiplicities 
\begin{align}\label{jhhj2}\textstyle 
[\lambda][\mu]= \sum_{\nu} g(\lambda,\mu,\nu) [\nu]  .
\end{align}
  The {Kronecker coefficients}   have been described as 
    `perhaps the most challenging, deep and mysterious objects in algebraic combinatorics' \cite{PP}.  
%
    Richard Stanley identified   the calculation of   Kronecker coefficients as  one of the definitive 
     open problems in algebraic combinatorics \cite[Problems 9]{Sta00}. 
   The positivity of Kronecker coefficients is equivalent to the existence of certain quantum systems  \cite{ky,MR2197548,MR2276458}  and  they have been used to understand entanglement entropy \cite{MR3748296}.

The  Kronecker squares  decompose  as   sums of   symmetric and anti-symmetric parts;   we hence define  the symmetric and anti-symmetric Kronecker coefficients 
\begin{align}\label{jhhj}
\textstyle 
[\la] [\la] = S^2[\la]+A^2[\la]\qquad
S^2[\la]=\sum_\nu {\rm sg}(\la,\nu)\color{black} [\nu] \color{black}
\qquad
A^2[\la]=\sum_\nu {\rm ag}(\la,\nu)\color{black} [\nu] \color{black}.
\end{align}
The question of the irreducibility of symmetric and  anti-symmetric tensor products is a central problem in group theory, where it is key to the Aschbacher--Scott maximal subgroup programme \cite{MM,MR1287249,MR1888172}.  
 For $\la$ a rectangular partition, the  coefficients  $ {\rm sg}(\la,\nu)$   played a starring  role in the demise of several famous conjectures Geometric Complexity Theory \cite{MR3695867,MR3868002}.   
 Despite  their central importance, almost nothing is known about the symmetric and anti-symmetric Kronecker coefficients.  In particular, much less is known about the  coefficients in (\ref{jhhj}) than their classical counterparts in (\ref{jhhj2}). 

In this paper, we take the first significant steps towards understanding symmetric and anti-symmetric Kronecker products.  
We   provide new closed formulas for decomposing these products, 
 unexpected  connections with 2-modular decomposition numbers, Catalan combinatorics, 
and a refinement  of  the famous  Saxl conjecture.      

\subsection{Generalising milestones from the classical theory}
The bulk of the paper  is dedicated to advancing our understanding of  
symmetric and anti-symmetric Kronecker coefficients by analogy with well-known milestones   in the classical theory.  
 The  milestones we generalise include: the classification of homogeneous and irreducible products \cite{BK}; the classification of multiplicity-free Kronecker products 
\cite{BeBo}; partial and complete results for special classes of partitions (such as hooks \cite{Blasiak,Liu}, 2-line partitions \cite{BaOr,Rosas}, partitions of small depth  \cite{Saxl, V, Z}, and rectangles \cite{Man10,Man11}); and most recently, Saxl's Kronecker positivity conjecture.

 We provide the analogue of the Bessenrodt--Kleshchev classification of multiplicity-free products for the symmetric and anti-symmetric Kronecker squares.   Unlike in the case of   classical Kronecker products, we do find that there exist  
non-linear  homogeneous  anti-symmetric Kronecker products. 
 
 \begin{main} 
 Any symmetric product $S^2([\la])$ for $\la \vdash n$ is (reducible and) inhomogeneous unless $\la$ is a linear partition.   
Any anti-symmetric product $A^2([\la])$ for $\la \vdash n$ is (reducible and) inhomogeneous unless $\la=(n), (n-1,1)$, $(2^2)$, or  $(3^2)$  (up to conjugation).  
 \end{main}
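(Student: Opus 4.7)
The plan is to handle the symmetric and anti-symmetric cases separately. For $S^2[\la]$ a short Frobenius--Schur argument will suffice, while for $A^2[\la]$ I would verify the exceptional homogeneous cases directly and then argue inductively that all other $\la$ produce at least two distinct constituents.

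For the symmetric square, I would first show that the trivial character $[(n)]$ appears in $S^2[\la]$ with multiplicity exactly one. Since $\langle [\la][\la],[(n)]\rangle = \langle [\la],[\la]\rangle = 1$, the standard Frobenius--Schur computation yields
\[
\langle S^2[\la],[(n)]\rangle \;=\; \tfrac{1}{2}\Bigl(1 + \tfrac{1}{n!}\sum_{g\in S_n}[\la](g^2)\Bigr) \;=\; \tfrac{1}{2}\bigl(1 + \nu_2([\la])\bigr) \;=\; 1,
\]
as every irreducible character of $S_n$ is rational and so has Frobenius--Schur indicator $\nu_2([\la])=+1$. If $S^2[\la]$ were homogeneous, it would therefore equal $[(n)]$, forcing $\dim[\la]=1$ and $\la$ linear. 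Conversely, when $\la$ is non-linear $\dim S^2[\la]=\binom{\dim[\la]+1}{2}\geq 3$, so the single copy of $[(n)]$ cannot exhaust the symmetric square, forcing a second distinct constituent (and hence both reducibility and inhomogeneity).

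For the anti-symmetric square, the linear partitions give $A^2[\la]=0$ trivially; the case $\la=(n-1,1)$ is covered by the classical identification $\wedge^2 V \cong [(n-2,1,1)]$ of the exterior square of the standard representation; and the cases $\la=(2^2),(3^2)$ are dispatched by direct character computation (giving $A^2[(2^2)] = [(1^4)]$ and $A^2[(3^2)]$ equal to the unique irreducible of dimension $10$ in $S_6$), with conjugation invariance of $A^2$ extending these to all conjugates. For $\la$ outside the exceptional list, I would argue by induction on $n$, using the fact that $S^2$ and $A^2$ commute with restriction:
\[
A^2[\la]\downarrow_{S_{n-1}} \;=\; \sum_{\mu} A^2[\mu] \;+\; \sum_{\{\mu,\nu\}}[\mu][\nu],
\]
summing over partitions $\mu\vdash n-1$ with $\mu\subset\la$, respectively over unordered pairs of such distinct partitions. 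If $A^2[\la]=m[\rho]$ were homogeneous, the restriction would be a sum of distinct irreducibles $\sum_{\sigma\subset\rho}[\sigma]$ each with uniform multiplicity $m$; by the inductive hypothesis, each non-exceptional $A^2[\mu]$ appearing above is inhomogeneous, and the cross terms $[\mu][\nu]$ contribute further constituents with varying multiplicities, making such a matching impossible outside the listed exceptional $\la$.

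The hardest case will be partitions with only one removable box, namely rectangles $\la=(m^\ell)$: here the restriction above reduces to the single summand $A^2[\mu]$ with $\mu=(m^{\ell-1},m-1)$ and the cross terms vanish, so the inductive matching argument is too weak. For rectangles, I would proceed directly, using the Murnaghan--Nakayama rule together with the classical formula expressing $[\la](g^2)$ in terms of the $2$-quotient of $\la$ to compute inner products $\langle A^2[\la],[\mu]\rangle$ for suitably chosen $\mu$ (e.g.\ two-row or near-hook partitions) and exhibit at least two non-vanishing values. That exactly $(2^2)$ and $(3^2)$ appear as the only homogeneous rectangles should then follow from a low-dimensional coincidence in the $2$-quotient combinatorics that fails for all larger rectangles.
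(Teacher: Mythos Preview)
Your treatment of the symmetric square is correct and matches the paper's argument exactly: the trivial character appears once by the Frobenius--Schur computation, and a degree count finishes it.

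For the anti-symmetric square, however, your inductive scheme has a real gap. You assert that if $A^2[\la]=m[\rho]$ then the restriction $m\sum_{\sigma\subset\rho}[\sigma]$ cannot match $\sum_\mu A^2[\mu]+\sum_{\{\mu,\nu\}}[\mu][\nu]$ ``because the $A^2[\mu]$ are inhomogeneous and the cross terms have varying multiplicities''. But inhomogeneity of the individual $A^2[\mu]$ says nothing about whether their \emph{sum} with the Kronecker cross terms could be uniform; you would need a specific constituent whose multiplicity you can compute on both sides and show they disagree. As written this is an assertion, not an argument, and there is no obvious reason it should go through without substantial further work. Your rectangle paragraph is likewise only a plan: invoking ``2-quotient combinatorics'' does not identify which inner products you would compute or why exactly $(2^2)$ and $(3^2)$ are the only coincidences.

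The paper avoids induction entirely and instead assembles several concrete structural results proved earlier. For $\la$ with at least two removable boxes (and not a hook or 2-line partition) it shows, via the small-depth analysis of Theorem~\ref{thm:smalldepthconst}, that $[n-2,1^2]$ occurs in $A^2[\la]$ with multiplicity $\binom{r_1}{2}$, and then uses the $2$-modular identity $\langle S^2[\la]-A^2[\la],\chi_{\text{hook}}\rangle=0$ (Proposition~\ref{prop:hooks}) to force a second, different hook constituent. Hooks and 2-part partitions are handled by the explicit formulas of \cite{MW} and Section~\ref{sec:2part}. For rectangles $(a^b)$ with $a,b\ge 3$ the paper first proves $\langle A^2[(a^b)],[ab-3,1^3]\rangle=1$ (Proposition~\ref{rectangleresult}) and then exhibits a single conjugacy class of cycle type $(a+b,1^{\ldots})$ or $(a+b+1,1^{\ldots})$ on which $A^2[(a^b)]$ vanishes (Murnaghan--Nakayama: no hook of $(a^b)$ is that long) while $[ab-3,1^3]$ does not. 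This is much more direct than the inductive matching you propose, and it is where the $2$-modular machinery of Section~\ref{sec:rel-to-dec} earns its keep.
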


The corresponding classification for  plethysm products  was obtained in \cite{bowP}. 
 The classification of multiplicity-free symmetric and anti-symmetric Kronecker products is the subject of  \Cref{prop:mfsym-and-alt,prop:mfsym-and-alt2};   
 when both parts are multiplicity-free, a complete answer is given.

The  symmetric and anti-symmetric Kronecker squares of   hook characters  were  recently determined  in 
\cite{MW}; we provide   new results on the hook constituents in arbitrary 
symmetric and anti-symmetric Kronecker products 
in \cref{hook-constituents}.  We provide the complete decomposition  of 
$S^2([k,k])$ and $A^2([k,k])$
 in \Cref{thmkksplit} (see also Theorem B) as well as 
$S^2([k+1,k])$, $A^2([k+1,k])$
and 
$S^2([k+1,k-1])$, $A^2([k+1,k-1])$
 in \Cref{thmk+1ksplit,thmk+1k-1}. 
For arbitrary $\la$, we determine 
  the constituents of small depth in $S^2([\la])$ and $A^2([\la])$   in \Cref{thm:smalldepthconst} 
   and we obtain stronger results for $\la$ a rectangular partition in \cref{rectangleresult}
   (these results are also an essential part of  our proof of the 
  classification of multiplicity-free products for the symmetric and anti-symmetric Kronecker squares).
For   $\la$ of small depth we obtain  the complete decompositions in  \cref{somesmallprods}.   
In Section~\ref{sec:sign-and-beyond} we look at the opposite end of the spectrum 
and locate the sign character (giving an alternative proof of a recent
result of \cite{GIP}) and its neighbour within $A^2[\la]$.
 
 \smallskip
\subsection{A new Catalan identity} The (symmetric) Kronecker coefficients 
 have been intensely studied in recent years,   
   motivated by  applications 
across 
 invariant theory, geometric complexity theory, 
and quantum information theory.  
We  provide  a new application 
to  algebraic  combinatorics and answer a question  posed by Laurent Manivel in 2010, \cite{Man10}. 

\begin{mainB}
For $k\in \N$ and $n=2k$, we have
\begin{align}\label{akabeldsjds}
S^2([k,k]) =\sum_{\alpha   \in E_4(n)} [\alpha  ] \;,\quad
A^2([k,k]) =\sum_{\alpha   \in O_4(n)}[\alpha  ]\: 
\end{align}
where $E_4(n)$ and $O_4(n)$ denote the sets of partitions 
of the form $\la=(\la_1,\la_2,\la_3,\la_4)$ with all 
$\la_i \in 2\N$   (or $\la_i \in 2\N+1$, respectively).  
This result has the following combinatorial shadow. 
 We  define $$s(\al)=\begin{cases}
1 &\text{ if }\alpha\in E_4(n)\\
-1 &\text{ if }\alpha\in O_4(n)\\
0 &\text{ otherwise}. 
\end{cases}$$  
Letting $C_k=\frac{1}{k+1}{2k \choose k}$ denote the $k$-th Catalan number, we have that 
$$C_k = \sum_{\al \vdash 2k} s(\al) f(\al)   
$$
 where   
  $f(\al)  =[\al](\id )$ is the number of standard Young tableaux of shape~$\al$.

\end{mainB}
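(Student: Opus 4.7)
The plan is to reduce Theorem B to two character-theoretic identities. Using the elementary facts $S^2[\chi](g)+A^2[\chi](g)=\chi(g)^2$ and $S^2[\chi](g)-A^2[\chi](g)=\chi(g^2)$, the two claimed decompositions in \eqref{akabeldsjds} are equivalent to the pair
\begin{align*}
[k,k]\otimes[k,k] &= \sum_{\alpha\in E_4(2k)\cup O_4(2k)}[\alpha],\\
\chi_{(k,k)}(g^2) &= \sum_{\alpha\in E_4(2k)}\chi_\alpha(g) - \sum_{\alpha\in O_4(2k)}\chi_\alpha(g).
\end{align*}
Adding and halving recovers $S^2([k,k])$, while subtracting and halving yields $A^2([k,k])$. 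The Catalan identity is then immediate: evaluating the second identity above at $g=\id$, the left-hand side becomes $\chi_{(k,k)}(\id)=f(k,k)=\tfrac{(2k)!}{k!(k+1)!}=C_k$ by the hook length formula, while the right-hand side is $\sum_\alpha s(\alpha)f(\alpha)$.

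The first identity is the decomposition of the Kronecker square of a two-row partition; it can be obtained either from the Remmel--Whitehead/Rosas formulas for Kronecker products of two-row shapes, or within this paper's framework by combining the multiplicity-free classification (\Cref{prop:mfsym-and-alt}) with the hook-constituent and small-depth-constituent results (\Cref{hook-constituents,thm:smalldepthconst}). For the second identity (the ``virtual character'' $g\mapsto \chi_{(k,k)}(g^2)$), my plan is to apply Frobenius's two-variable formula $\chi^{(k,k)}(\mu)=[y_1^{k+1}y_2^{k}](y_1-y_2)\,p_\mu(y_1,y_2)$ together with the observation that if $g$ has cycle type $\mu$ then $g^2$ has cycle type obtained by halving each even part of $\mu$ into two equal pieces and leaving each odd part unchanged. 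The exponential formula then converts the Frobenius characteristic of $g\mapsto\chi_{(k,k)}(g^2)$ into
$$\sum_\mu \chi^{(k,k)}(\mu^{(2)})\,\frac{p_\mu(x)}{z_\mu} \;=\; [y_1^{k+1}y_2^{k}](y_1-y_2)\,\prod_i \frac{1}{(1-y_1 x_i)(1-y_2 x_i)(1-y_1 y_2 x_i^2)},$$
and expanding the three Cauchy-type products and extracting the coefficient of $y_1^{k+1}y_2^{k}$ produces the compact formula $\sum_{j=0}^k s_{(j,j)}(x)\,h_{k-j}(x^2)$.

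To complete the identification with $\sum_{E_4} s_\alpha-\sum_{O_4} s_\alpha$, I would expand $h_m(x^2)=\sum_{a+b=2m,\,a\geq b\geq 0}(-1)^b s_{(a,b)}(x)$ (which follows from $H(y)H(-y)=\prod_i 1/(1-x_i^2 y^2)$ together with Jacobi--Trudi), and then compute each product $s_{(j,j)}\cdot s_{(a,b)}$ via the Littlewood--Richardson rule for two-row Schur functions; a telescoping/sign-cancellation argument then collapses the double sum to the desired signed sum of Schur functions.

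\textbf{Main obstacle.} The sign-cancellation in this last step is the technical heart of the proof: each product $s_{(j,j)}\cdot s_{(a,b)}$ expands into up to a dozen Schur terms in the four-row truncation, and one must organise the cancellations so that the surviving terms are indexed exactly by the partitions in $E_4\cup O_4$, each with the correct sign. A potentially cleaner route is to observe $\sum_k t^{2k}\sum_{j=0}^k s_{(j,j)}(x)\,h_{k-j}(x^2) = \bigl(\sum_j t^{2j} s_{(j,j)}(x)\bigr)\cdot \prod_i 1/(1-t^2 x_i^2)$ and to recognise the right-hand side as a specialisation of a Cauchy/BC-type identity whose Schur expansion directly manifests the $E_4/O_4$ parity pattern.
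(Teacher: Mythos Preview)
Your route is genuinely different from the paper's, and the symmetric-function computation you set up is correct as far as it goes: the Frobenius characteristic of $[k,k]^{(2)}$ really is $\sum_{j=0}^k s_{(j,j)}(x)\,h_{k-j}(x^2)$, by exactly the Cauchy/exponential manipulation you outline. However, you stop precisely at the point that carries all the content. You yourself flag the sign-cancellation identifying $\sum_j s_{(j,j)} h_{k-j}(x^2)$ with $\sum_{E_4}s_\alpha-\sum_{O_4}s_\alpha$ as the ``main obstacle'' and offer only a sketch of possible attacks; until that identity is actually proved, you have not established \eqref{akabeldsjds}. So as written this is a plan, not a proof. (The paper's acknowledgments mention an alternative argument of Stembridge along symmetric-function lines, so this route can be made to work; but the work is in the step you have left open.)

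A minor point: your claim that the decomposition of $[k,k]^2$ can be recovered ``within this paper's framework'' from \Cref{prop:mfsym-and-alt} together with the small-depth and hook results is circular. The paper simply quotes this decomposition as Proposition~\ref{prop:2partmfKronecker}(2), a known 2-part formula; \Cref{prop:mfsym-and-alt} and the other results you cite use it rather than prove it.

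For comparison, the paper's proof of \eqref{akabeldsjds} avoids the plethystic computation entirely. Starting from the known decomposition of $[k,k]^2$ into $E_4(n)\cup O_4(n)$ (Proposition~\ref{prop:2partmfKronecker}), it argues in two halves. First, Ressayre's semigroup property for symmetric Kronecker coefficients (Proposition~\ref{prop:Ressayre}) reduces $sg((k,k),\lambda)>0$ for $\lambda\in E_4$ to the four base cases $\lambda=(2^j)$, $j\le 4$, which are checked directly. Second, to show $sg((k,k),\mu)=0$ for $\mu\in O_4$, the paper works with modules: it realises $S^2(S^{(d,d)}_{\mathbb C})$ as a quotient of $S^2(M^{(d,d)}_{\mathbb C})$ by $S^2(M^{(d+1,d-1)}_{\mathbb C})$ plus a correction term $S^{(d,d)}\otimes M^{(d+1,d-1)}$, decomposes $S^2(M^\lambda_{\mathbb C})$ explicitly via the orbit structure on pairs of row-standard tableaux (Proposition~\ref{youngrul}), and then matches multiplicities for $\mu\in O_4$ by a Littlewood--Richardson count against the Brown--van Willigenburg--Zabrocki formula (Proposition~\ref{(dd)}). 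The Catalan identity is then deduced exactly as you say, by evaluating $[k,k]^{(2)}$ at the identity.

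The trade-off: your approach, once completed, would be self-contained within symmetric-function theory and would not need Ressayre's semigroup result or the module-theoretic analysis of $S^2(M^\lambda)$; the paper's approach sidesteps any delicate Schur-expansion bookkeeping but imports more external machinery.
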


 The result for Kronecker squares $[k,k]^{\otimes 2}$ 
received a great detail of attention a decade ago \cite{GWXZ,BWZ10,Man10}  in part due 
\color{black}to  \color{black}
motivation from   theoretical quantum computation.
It was in Manivel's paper concerning these tensor squares that he 
posed the question as to the decomposition of $[k,k]^{\otimes 2}$  into symmetric and anti-symmetric parts, which Theorem B resolves.  
The first few examples of Catalan numbers can be calculated using Theorem B as follows,
\begin{align*}
C_1&=1=1=f({\Yvcentermath1\Yboxdim{4pt}\gyoung(;;)})
\\
C_2&=2=1+2-1=f({\Yvcentermath1\Yboxdim{4pt}\gyoung(;;;;)})+f({\Yvcentermath1\Yboxdim{4pt}\gyoung(;;,;;)})-f({\Yvcentermath1\Yboxdim{4pt}\gyoung(;,;,;,;)})
\\
C_3&=5=1+9+5-10=f({\Yvcentermath1\Yboxdim{4pt}\gyoung(;;;;;;)})+f({\Yvcentermath1\Yboxdim{4pt}\gyoung(;;;;,;;)})
+f({\Yvcentermath1\Yboxdim{4pt}\gyoung(;;,;;,;;)})
-f({\Yvcentermath1\Yboxdim{4pt}\gyoung(;;;,;,;,;)}).
\end{align*}

\smallskip
 \subsection{Saxl's conjecture}
A few years ago, fresh impetus for the Kronecker problem came
from a conjecture of Saxl which states that for a triangular number $n=\tfrac{1}{2}k(k+1)$ and  
 $\rho_k=(k,\ldots,2,1)\vdash n$  the staircase partition, the tensor square $[\rho_k]^2$ contains
  every irreducible $\mathbb C S_n$-character with positivity multiplicity.  
Whilst Saxl's conjecture is still unverified, 
many constituents of $[\rho_k]^2$ have been found and 
 the conjecture has inspired a lot of recent work, some using
connections to other groups,
or having applications to Geometric Complexity Theory and
Quantum Information Theory.  
We suggest the following strengthening of Saxl's conjecture:

\begin{mainD}
The symmetric part  $S^2([\rho_k])$ of the square $[\rho_k]^2$
contains all irreducible characters $[\la]$ of $S_n$ as constituents,
except for the character $[1^n]$ when $k\equiv 2 \mod 4$.

\end{mainD}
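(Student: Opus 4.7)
The plan is to reduce Conjecture~C to a refinement of Saxl's conjecture, handle the sign character $[1^n]$ separately via a twisted Frobenius--Schur calculation, and attack the remaining constituents using the explicit decompositions already obtained in the paper. From $\chi_{S^2 V}(g)=\tfrac12(\chi_V(g)^2+\chi_V(g^2))$ one has, with $n=\binom{k+1}{2}$,
$$
\langle S^2[\rho_k],[\la]\rangle
\;=\;\tfrac12\bigl(g(\rho_k,\rho_k,\la)+\iota(\rho_k,\la)\bigr),
\qquad
\iota(\rho_k,\la):=\tfrac{1}{n!}\sum_{g\in S_n}\chi_{\rho_k}(g^2)\chi_\la(g),
$$
and $\iota(\rho_k,\la)=\langle S^2[\rho_k]-A^2[\rho_k],[\la]\rangle$. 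Because $|\iota(\rho_k,\la)|\leq g(\rho_k,\rho_k,\la)$ and the two integers share parity, Conjecture~C is equivalent to the assertion that the full multiplicity of $[\la]$ in $[\rho_k]^{\otimes 2}$ is never concentrated entirely in $A^2[\rho_k]$, with the single exception of $\la=[1^n]$ when $k\equiv 2\pmod 4$.

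First I would pin down the sign character. Since $\rho_k=\rho_k'$ one has $g(\rho_k,\rho_k,[1^n])=1$, and the classical sgn-twisted Frobenius--Schur identity for a self-conjugate partition with $d$ boxes on the main diagonal gives $\iota(\rho_k,[1^n])=(-1)^{(n-d)/2}$. For $\rho_k$ one has $d=\lceil k/2\rceil$, and a short case check on $k\bmod 4$ returns $+1$ when $k\equiv 0,1,3$ and $-1$ when $k\equiv 2$. This locates $[1^n]$ exactly as claimed and accounts for the congruence condition; it is also obtained independently in Section~\ref{sec:sign-and-beyond}.

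For $\la\neq [1^n]$ I would proceed in two stages. The explicit decompositions of \Cref{thm:smalldepthconst} and \Cref{rectangleresult} for small-depth and rectangular constituents, together with \Cref{hook-constituents} for hook constituents, already place many $[\la]$ unconditionally inside $S^2[\rho_k]$; via the semigroup property of Kronecker coefficients and Littlewood--Richardson inflation from partial staircases $\rho_j$ with $j<k$, these lists may be enlarged considerably. What remains are the rigid cases with $g(\rho_k,\rho_k,\la)=1$, where one must prove $\iota(\rho_k,\la)=+1$ — equivalently, that the unique copy of $[\la]$ in $[\rho_k]^{\otimes 2}$ sits in $S^2[\rho_k]$. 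Since $\iota(\rho_k,\la)$ is the multiplicity of $[\la]$ in the Adams-operation virtual character $\psi^2([\rho_k])=S^2[\rho_k]-A^2[\rho_k]$, any combinatorial model for this signed object — ideally a sign-reversing involution on pairs of border-strip tableaux of shape $\rho_k$ — would suffice.

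I expect this last step to be the main obstacle. The weaker assertion $g(\rho_k,\rho_k,\la)\geq 2$, which one needs whenever $\iota(\rho_k,\la)=0$, already strictly strengthens Saxl's conjecture and is out of reach of present methods; beyond that, controlling the sign of $\iota(\rho_k,\la)$ uniformly in $\la$ demands an understanding of squared-cycle characters of the staircase that, to our knowledge, is not yet available. A realistic intermediate deliverable is therefore a conditional theorem: assuming double-positivity $g(\rho_k,\rho_k,\la)\geq 2$ for $\la\notin\{[n],[1^n]\}$, reduce Conjecture~C to the sign of $\iota(\rho_k,\la)$, and verify the latter unconditionally on all families for which the paper already provides closed-form symmetric-part decompositions.
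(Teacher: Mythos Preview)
The statement you are attempting to prove is \emph{Conjecture~C} in the paper, not a theorem: the paper offers no proof and explicitly presents it as a strengthening of Saxl's conjecture suggested by computational data (see \Cref{sec:Saxl-refined}). There is therefore no ``paper's own proof'' to compare against.

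Your proposal is honest about this. The treatment of the sign constituent is correct and matches what the paper actually establishes (Theorem~\ref{thm:GIP} and its corollary): $[1^n]$ lies in $S^2([\rho_k])$ precisely when $k\not\equiv 2\pmod 4$, and your computation of $d=\lceil k/2\rceil$ and the resulting parity of $(n-d)/2$ agrees with the paper's. The reformulation via $\iota(\rho_k,\la)=\langle\cla^{(2)},[\la]\rangle$ is also correct and is exactly the viewpoint the paper develops in \Cref{sec:rel-to-dec}.

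However, your own final paragraph identifies the genuine gap: for general $\la$ you would need either $g(\rho_k,\rho_k,\la)\ge 2$ or, when the Kronecker multiplicity equals~1, a proof that $\iota(\rho_k,\la)=+1$. The first already strictly strengthens the (open) Saxl conjecture, and you offer no mechanism for the second beyond a hoped-for sign-reversing involution. The intermediate steps you sketch --- using the small-depth results, hook results, and semigroup inflation --- do produce constituents of $S^2([\rho_k])$, but they fall well short of \emph{all} $\la$, and nothing in the paper closes that gap either. So the proposal is not a proof of Conjecture~C; it is a correct framing of why the conjecture is hard, together with a verification of the one case ($\la=(1^n)$) that the paper itself settles.
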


We also formulate (anti-)symmetric generalisations of the Heide--Saxl--Tiep--Zalesskii conjecture \cite{HSTZ}, see \cref{sec:Saxl-refined} for more details. 

\smallskip
\subsection{Kronecker splitting and  2-modular representation theory}  
In Section~\ref{sec:rel-to-dec}
we provide a surprising new link  between 
the splitting of Kronecker squares of complex characters
 and the calculation of 2-modular decomposition numbers.  
This serves both ways: as a further motivation for studying the splitting of Kronecker squares
(we will see that the square splitting provides new linear
relations between 2-decomposition numbers) 
 but also for obtaining results on Kronecker squares by using 2-projective characters.
For example, using the fact that triangular partitions label simple projective modules in characteristic 2,  we obtain the following:

\begin{mainE}
Let $k\in \N$, $n=k(k+1)/2$ and $\rho_k=(k,k-1,\ldots,1) $ the staircase partition.  Given $\la \neq \rho_k$, we have that 
$$\gen{S^2(\cla),[\rho_k]} = \gen{A^2(\cla),[\rho_k]}$$
and, in particular, $g(\la,\la,\rho_k)$ is even.
\end{mainE}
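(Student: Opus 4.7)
The plan is to show $\gen{S^2\cla - A^2\cla, [\rho_k]} = 0$ whenever $\la \neq \rho_k$. Together with $\cla^2 = S^2\cla + A^2\cla$ this would give $\gen{S^2\cla,[\rho_k]} = \gen{A^2\cla,[\rho_k]}$ and, in turn, $g(\la,\la,\rho_k) = 2\gen{S^2\cla,[\rho_k]}$ would automatically be even, so the whole theorem reduces to this single vanishing.

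The backbone I would use is the classical character identity $(S^2\chi - A^2\chi)(g) = \chi(g^2)$, valid for any ordinary character $\chi$; this rewrites the inner product as $\tfrac{1}{n!}\sum_{g\in S_n} [\la](g^2)[\rho_k](g^{-1})$. Two standard facts then do all the work. First, since $\rho_k$ is a 2-core (staircase partitions are precisely the 2-cores, and in characteristic 2 they label the projective simples of $S_n$), the ordinary character $[\rho_k]$ coincides with the character of an indecomposable projective module, and hence vanishes on every 2-singular element of $S_n$. Second, every 2-regular $g\in S_n$ has odd order, so squaring permutes the entries of each cycle cyclically; consequently $g^2$ has the same cycle type as $g$, so $[\la](g^2) = [\la](g)$ on 2-regular classes.

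Combining these, I expect the sum to collapse: the projectivity of $[\rho_k]$ confines the support to 2-regular $g$, where $[\la](g^2)$ may be replaced by $[\la](g)$, and a second use of projectivity (in reverse) re-extends the sum back over all $g\in S_n$. The end result is $\gen{\cla, [\rho_k]} = \delta_{\la,\rho_k}$, which vanishes for $\la\neq\rho_k$ as desired. No serious obstacle is anticipated: both ingredients, namely the projectivity of the staircase character in characteristic 2, and the cycle-type (equivalently, Galois) invariance $[\la](g^2) = [\la](g)$ on odd-order elements, are classical pieces of the modular representation theory of symmetric groups, and the introduction of the paper already advertises projectivity of $[\rho_k]$ as the intended tool.
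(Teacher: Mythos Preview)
Your proposal is correct and follows essentially the same route as the paper: the paper packages the argument into its Lemma~\ref{lem:basicobs} (applied with $\vartheta=[\rho_k]$, $\chi=\cla$, $p=2$), but the two key ingredients are identical, namely that $[\rho_k]$ is a 2-projective character (since $\rho_k$ is a 2-core) and hence vanishes on 2-singular classes, and that any 2-regular $g\in S_n$ is conjugate to $g^2$ so that $\cla(g^2)=\cla(g)$ there. Your direct computation unwinds precisely the proof of that lemma and its application in the paper's Theorem~\ref{cor:rho}.
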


Our 2-modular results are also key to the proof of Theorem A, above.  
Further connections between Kronecker splittings and 
 2-decomposition numbers are discussed
in some detail in \cref{sec:rel-to-dec} 
and
\color{black} are  \color{black}
 used to calculate information about hook constituents of   symmetric and anti-symmetric Kronecker products in \cref{hook-constituents}.  
These modular results are also used later in the proof of Theorem A.  
We hope this  should add further interest in the problem
of determining the splitting of Kronecker squares.

\smallskip
\subsection{Existing   literature on symmetric Kronecker coefficients} 
The  symmetric Kronecker coefficients are of fundamental importance in 
 Geometric Complexity Theory (see for example \cite{BCI,BCJ,GIP,BLMW} and references therein).  
Despite this interest,  almost nothing is known about values of symmetric Kronecker coefficients: 
the ${\rm sg}((a,1^b),\la)$ for $\la \vdash a+b$ were calculated in \cite{MW};  
 some examples of zero values were calculated in \cite{Ressayre}; the irreducible (anti-)symmetric squares for {\em alternating} groups were classified in \cite{MM} (over fields of arbitrary characteristic).  
 In this paper we provides new tools for the calculation  of symmetric (and anti-symmetric) Kronecker coefficients and suggest  further avenues of research.

      \section{Preliminaries}\label{sec:prelim}

 In this section we introduce some notions,   fix some notation,
and we also recall some background.
 
 \subsection{Partition combinatorics }
 We define a partition, $\la \vdash n$, to be a finite, weakly decreasing sequence of non-negative integers $(\la_1,\la_2,\dots)$  whose sum $|\la|=\la_1+\la_2+\cdots$ equals $n$.  
 We denote by $P(n)$ the set of all partitions of $n$.  
 For a partition $\lambda \in P(n)$, we write $|\la|=n$ for its size and $\ell (\lambda)$ for its length,
i.e., the number of positive parts of $\lambda$.
We define
$P_\ell(n)$ to be the set of all partitions $\la=(\la_1,\la_2,\dots,\la_\ell)$ with  at most  $\ell$   positive parts.  The Young diagram of $\la$ is given as
$$
Y(\la) = \{(i,j) \mid i\in \{1,\ldots,\ell(\la)\}, j\in \{1,\ldots,\la_i\}\};
$$
we think of it as a diagram depicted in matrix notation, with
a box at each $(i,j)\in Y(\la)$, and row~1 being the top row.
The partition $\la$ and its diagram are occasionally identified,
e.g., when we talk about an intersection $\la \cap \mu$ of two
partitions $\la,\mu$.  Given $\nu$ and $\la$ two partitions, we write $\la \subseteq \nu$ if $\la_i\leq \nu_i$ for all $i\geq 1$.  Given $\la\in P(m)$ and $\nu \in  P(n)$ such that  $\la \subseteq \nu$, we  define the resulting skew-partition $\nu \setminus \la$     (or $Y(\nu\setminus\la)$) of $n-m$  to be  the set difference   $Y(\nu)\setminus Y(\la)$.  
We say that $\la\vdash n$ is  a linear partition if its Young diagram is a line, that is $\la\in \{(n),(1^n)\}$.

When $\la=(\la_1,\la_2,\ldots ) \in P(n)$, its depth is
defined to be $d(\la)=n-\la_1$.  
For two partitions $\la=(\la_1,\la_2,\ldots)$, $\mu=(\mu_1,\mu_2,\ldots)$, we define
their sum componentwise, i.e., $\la+\mu=(\la_1+\mu_1, \la_2+\mu_2, \ldots)$ (where we extend
the partitions by trailing zeros, if necessary).
When the smallest part of $\la$ is greater than \color{black} or  \color{black} equal to $  \mu_1$,  
we can concatenate the parts of $\la$ and $\mu$ and we 
denote the resulting partition by $\la \cup \mu$.

An important notion in the theory is that of a hook in a diagram
(see \cite[Section 2.3.17]{JK} for more on this).
The hook $H_{ij}$ to $(i,j)\in Y(\la)$ is the set of boxes
$$
H_{ij}(\la) = \{(i,s) \mid s\in \{j,\ldots,\la_i\}\}
\cup \{(r,j) \mid r\in \{i,\ldots,\la^t_j\}\};
$$
its length is $h_{ij}=|H_{ij}(\la)|$; a hook of length $k$ is also called
a $k$-hook.
The diagonal   (or Durfee) length  of $\la$, denoted by $dl(\la)$,
is the number of \color{black} non-zero  principal hooks $h_i:=H_{ii}(\la)$ for $i\geq 1$.
 We let $H(\la)$ denote the partition formed from the principal hook lengths of $\la$, that is 
 $H(\la)=(h_1,h_2,\dots, h_{dl(\la)})$.  For example 
 $H(4,3,3)=(6,3,1)$.
   \color{black}

 Fixing $k$, and successively removing $k$-hooks from $\la$
as long as possible,
we reach the (uniquely determined) $k$-core $\la_{(k)}$ of $\la$;
the number of $k$-hooks that we have removed from $\la$ to obtain its $k$-core
is called the $k$-weight of~$\la$.
For $k=2$, the 2-cores are just
the partitions of staircase form
$\rho_m=(m,m-1,m-2,\ldots,2,1)$, for some $m\in \N_0$
(for $m=0$ considering $\rho_0$ as the empty partition).

Given $\nu\setminus \lambda $ a (skew) partition of $n$, we define a $(\nu\setminus\la)$-tableau of weight $\mu$ to be a map 
${\sf T}: Y(\nu\setminus\la)  \to \{1,\dots,n\}$ such that $\mu_i=|\{ x \in Y(\nu\setminus \lambda) : {\sf T}(x)=i  \}|$ for $i\geq 1$.     
We depict this by placing each integer within the corresponding box in the Young diagram, for example 
$$
\gyoung(;;1;2,3;4)
\qquad
\gyoung(;;2;1,3;4)
\qquad
\gyoung(;;1;2,4;3)
\qquad
\gyoung(;;2;1,4;3)
\qquad
\gyoung(;;1;1,2;3)
\qquad
\gyoung(;;1;1,3;2)
$$
are all examples of $((\color{black} 3,2 \color{black})\setminus (1))$-tableaux; the first 4 tableaux are of weight $(1^4)$ and the final two tableaux  are of weight $(2,1^2)$.  
We define an equivalence relation on $\nu\setminus \la$-tableaux of weight $\mu$ 
by 
$\sf  S \sim T$ 
if $\sf  S$ and $\sf  T$ differ only by permuting the entries within their   rows.  
For example,  the first 4 tableaux above (respectively the final 2 tableaux above) belong to the same equivalence class $\sim$.  
We define a row-standard tableau to be an equivalence class  of $\sim$ and we 
choose as a  
$\sim$-class representative to be  that in which the entries along the rows are weakly  increasing.  
We say that a row-standard tableau is semistandard if the entries along its columns are strictly increasing.  
We denote the sets of all row-standard and semistandard $\nu\setminus \la$-tableaux of weight $\mu$ 
by ${\sf RStd}(\nu\setminus \la,\mu)$ and  ${\sf SStd}(\nu\setminus \la,\mu)$, respectively.   
 We define the reverse reading word  of a tableau,  $\sf T$,
 to be the sequence of integers obtained by recording the entries of 
the first row of  $\sf T$ backwards, followed by the second row, and continuing in this fashion.  
For example the reverse reading words of the above tableaux are
$$
2143 \quad 1243
\quad
2134
\quad
1234 
\quad
1132
\quad
1123
$$
respectively.  We define the set of Littlewood--Richardson tableaux, ${\sf LR} (\nu\setminus \la,\mu)
\subseteq {\sf SStd} (\nu\setminus \la,\mu)$ whose reverse reading word is a lattice \color{black}word
\color{black} (i.e. every \color{black} left subfactor \color{black} has more $j$s than $(j+1)$s for $j\geq 1$), \color{black}   we refer to \cite[Chapter 16]{J} for a less terse definition of these tableaux. 
\color{black}  \color{black}
 
 \subsection{Representations of symmetric groups}
We write $S_n$ for the symmetric group on $n$ letters.
For background on the representation theory of the symmetric groups, the reader is referred to \cite{J,JK}.
 For $\lambda \in P(n)$,  we let $S^\la_\Z$ and $M^\la_\Z$ denote  the Specht  and Young permutation modules for $S_n$
associated to $\la$, defined over~$\Z$.    
Given $F$ a field, we set  $S^\la_F=S^\la_\Z\otimes _\Z F $
 and $M^\la_F=M^\la_\Z\otimes _\Z F $.  
The permutation module $M^\la_\Z$ can be constructed as  
\color{black} having basis  indexed by the   \color{black}
 set  ${\sf RStd}(\la)$ under  the symmetric group action by place permutation (modulo the equivalence class $\sim$).  
By classical rules of Young and Littlewood--Richardson, we  have that
$$M^{{\color{black}  \nu \color{black}}}_{\mathbb C} \cong \bigoplus_{\nu \in P(n)} |{\sf SStd}(  \la,\nu)| S^{{\color{black} \la \color{black}}}_{\mathbb C} \qquad 
{\rm ind}_{S_m\times S_n}^{S_{m+n}}(S^\la_{\mathbb C} \boxtimes S^\mu_{\mathbb C}) \cong \!\!\!\bigoplus_{\nu \in P(m+n)} |{\sf LR}(\nu\setminus \la,\mu)| S^\nu_{\mathbb C}.$$
%

For $S^\la_\C$, we write $\cla$ for the corresponding
irreducible complex character of $S_n$.
Then
$\{[\la] \mid \la \in P(n)\}$
is the set of all irreducible complex characters of $S_n$.
When $\la=(\la_1,\ldots,\la_m)$,
we omit the parentheses and write $[\la_1, \ldots, \la_m]$;
in particular, $[n]$ is the trivial character of $S_n$.

When we evaluate $[\la]$ on an element of $S_n$ of cycle type $\mu \in P(n)$,
we simply write $[\la](\mu)$ for the corresponding value.
 For $\la\in P(n)$,  we write $f(\la)  =[\la](\id)$ for the degree of $\cla$.


Of central interest in the representation theory
of the symmetric groups
are the Kronecker coefficients $g(\la,\mu,\nu)$
appearing as expansion coefficients in the
Kronecker products
$$
[\lambda][\mu]= \sum_{\nu} g(\lambda,\mu,\nu) [\nu] ,
$$
\color{black}  where 
$[\lambda][\mu]$ is the character defined by 
$[\lambda][\mu](g)=([\lambda](g))  ([\mu](g))$ for $g \in S_n$.  
\color{black}  \color{black}
Our main topic here is to split Kronecker squares into their symmetric and
alternating parts.

\subsection{Symmetric and anti-symmetric squares of representations}

We now recall some well known notions and
results on the symmetric and alternating parts
of a tensor square (see \cite{Huppert, I-book}).
 For any finite group $G$, we denote by $\Irr(G)$ the set of its irreducible (complex) characters, and $\gen{-,-}$ will denote
the usual scalar product on the $\C$-vector space of class functions on~$G$.

Now let $F$ be a field and $V$ a (finite-dimensional) $FG$-module; we consider its tensor square
$V\otimes_F V$, on which $G$ acts diagonally.
Let $\tau: V\otimes V \to  V\otimes V$ be the $FG$-homomorphism
defined on elementary tensors by
$$\tau(v_1 \otimes v_2) = v_2 \otimes v_1.$$
Let $S^2(V)$ and $A^2(V)$, respectively, be the
eigenspaces to $1$ and $-1$ for $\tau$;
these are the symmetric
and alternating part of the tensor square, respectively;
$A^2(V)$ is also called the  antisymmetric or exterior part of the square.
When $F=\C$, 
one easily computes
the characters of $S^2(V)$ and $A^2(V)$ from the character to~$V$; we recall the formula here:

\begin{lem}\label{lem:SA-values}
Let $V$ be a $\C G$-module with character $\chi$. 
Then the character $\chi_S$ of $S^2(V)$ is given by
\begin{align}
\label{nonzero1}
 \chi_S(g)=\tfrac 12 (\chi(g)^2+\chi(g^2)), \; \text{for all } g \in G.
 \end{align}
The character $\chi_A$ of $A^2(V)$ is given by
\begin{align}
\label{nonzero2}
 \chi_A(g)=\tfrac 12 (\chi(g)^2-\chi(g^2)), \; \text{for all } g \in G.
 \end{align}
\end{lem}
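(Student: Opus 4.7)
The plan is to exploit the fact that $g$, having finite order, acts diagonalisably on $V$, and then to carry out a direct eigenbasis calculation on $V \otimes V$ restricted to the $(+1)$- and $(-1)$-eigenspaces of $\tau$.

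First I would fix $g \in G$ and, using that $g$ has finite order, choose a basis $v_1,\dots,v_d$ of $V$ consisting of eigenvectors for $g$, with eigenvalues $\lambda_1,\dots,\lambda_d$ (which are roots of unity). Then the tensors $v_i \otimes v_j$ form an eigenbasis of $V \otimes V$ for the diagonal action of $g$, with eigenvalue $\lambda_i \lambda_j$ on $v_i \otimes v_j$. From this one immediately reads off
\[
\chi(g)^2 = \Bigl(\sum_i \lambda_i\Bigr)^2 = \sum_i \lambda_i^2 + 2\sum_{i<j}\lambda_i\lambda_j, \qquad \chi(g^2) = \sum_i \lambda_i^2,
\]
the second equality because $g^2$ has eigenvalues $\lambda_i^2$ on the same basis.

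Next I would diagonalise $\tau$ explicitly on this basis. The symmetric eigenspace $S^2(V)$ has basis
\[
\{\,v_i \otimes v_i : 1 \le i \le d\,\} \;\cup\; \{\,v_i \otimes v_j + v_j \otimes v_i : i < j\,\},
\]
while the antisymmetric eigenspace $A^2(V)$ has basis $\{v_i \otimes v_j - v_j \otimes v_i : i<j\}$. Each of these basis vectors is still an eigenvector for $g$ (with the same eigenvalue $\lambda_i\lambda_j$ as before, since the $g$-action commutes with $\tau$), so summing eigenvalues gives
\[
\chi_S(g) = \sum_i \lambda_i^2 + \sum_{i<j}\lambda_i\lambda_j, \qquad \chi_A(g) = \sum_{i<j}\lambda_i\lambda_j.
\]

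Combining the two displays yields $\chi_S(g) = \tfrac{1}{2}(\chi(g)^2 + \chi(g^2))$ and $\chi_A(g) = \tfrac{1}{2}(\chi(g)^2 - \chi(g^2))$, as required. There is no real obstacle here; the only thing to be a little careful about is justifying the diagonalisability of $g$, which follows from finiteness of $G$ (equivalently, from $g$ satisfying $x^{|G|}-1$, a polynomial with distinct roots in $\mathbb{C}$). One could alternatively package the same computation as $\chi_S(g) - \chi_A(g) = \operatorname{tr}((g\otimes g)\circ \tau \mid V\otimes V)$ and compute that trace directly on $v_i\otimes v_j$, which transposes the indices and so contributes only when $i=j$, giving $\sum_i \lambda_i^2 = \chi(g^2)$; combined with $\chi_S(g)+\chi_A(g)=\chi(g)^2$ this yields both formulas simultaneously.
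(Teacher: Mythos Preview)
Your proof is correct and is exactly the standard eigenbasis argument for this well-known formula. The paper itself does not prove this lemma at all; it simply recalls the result with the remark that ``one easily computes'' it, citing standard references such as \cite{Huppert, I-book}. So there is nothing to compare against, and your argument would serve perfectly well as the omitted verification.
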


In particular, for a character $\chi$,
the class function $\chi^{(2)}$ defined by
\[
\chi^{(2)}(g) = \chi(g^2) \quad \text{for all } g \in G
\]
is a difference of two characters, namely
\begin{align}\label{ppppp2}
 \chi^{(2)} = \chi_S - \chi_A .
 \end{align}
We denote by  $\1_G$ the trivial character of~$G$.
Then
\[
\gen{\chi^{(2)},\1_G }= \frac{1}{|G|} \sum_{g\in G} \chi(g^2) =: \nu_2(\chi)
\]
is the Frobenius--Schur indicator for $\chi$.
It is well known that
$\nu_2(\chi)\in \{-1,0,1\}$, and that this value is nonzero if and only if $\chi$ is a real-valued character, 
and it is ~1 exactly if $\chi$ is the character of a real representation of~$G$ (see \cite[13.1]{Huppert} or \cite[Chap.~4]{I-book}).
Thus, when $\chi$ is the character of a real representation of $G$,
we have
\[
1=  \nu_2(\chi) = \gen{\chi^{(2)},\1_G } = \gen{\chi_S - \chi_A, \1_G},
\]
and  $1=\gen{\chi,\chi} = \gen{\chi^2,\1_G}$,
hence $\gen{\chi_S,\1_G}=1$ and $\gen{\chi_A,\1_G}=0$.

Since all irreducible complex characters of $S_n$ are  
characters of rational representations, we have
these properties for all $\chi \in \Irr(S_n)$, i.e.,
the trivial character $[n]$ is a constituent of $S^2(\cla)$,
but not of $A^2(\cla)$, for all $\la \in P(n)$.

The main aim of our investigations is to contribute several results
on the characters $S^2(\cla)$ and $A^2(\cla)$, i.e.,
to provide information on the coefficients
$sg(\la,\mu), ag(\la,\mu)\in \N_0$ defined for $\la\in P(n)$ by
\[
S^2(\cla) = \sum_{\mu \in P(n)} sg(\la,\mu) \cmu
\quad  \text{and } \quad
A^2(\cla) = \sum_{\mu \in P(n)} ag(\la,\mu) \cmu .
\]

\begin{rem}
We note that $A^2(\la)=0$ if and only if $\la \in \{(n),(1^n)\}$.  
To see this, note that $\chi_\la({\rm id}) > 1$ for all $\la  \not \in \{(n),(1^n)\}$ and so the character is non-zero by equation (\ref{nonzero2}).  
We note that $S^2(\la)$ is never zero (for instance, we have already seen that it always contains the trivial character).  
\end{rem}

In later sections, the following facts will be useful
(they only require short computations with the explicit values from Lemma~\ref{lem:SA-values}):

\begin{lem}\label{lem:SA-res}
\label{lem:SA-sum}
Let $G$ be a finite group and $U$ a subgroup of $G$.
Let $\chi$ be a character of $G$, and suppose
the restriction to $U$ decomposes as
$\chi{\downarrow}_U = \chi_1 + \chi_2$,
with characters $\chi_1,\chi_2$ of $U$.
Writing $X$ for either $S$ or $A$, we have
\[
X^2(\chi){\downarrow}_U \, =
X^2(\chi_1) + X^2(\chi_2) + \chi_1  \chi_2.
\]
\end{lem}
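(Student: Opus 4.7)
The plan is to prove this by direct character computation using the explicit formulas recorded in \Cref{lem:SA-values}. Since both sides of the claimed identity are class functions on $U$, it suffices to verify equality of their values on every $u \in U$.

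First I would write $\psi = \chi\downarrow_U = \chi_1 + \chi_2$ and note that for any $u \in U$ we also have $u^2 \in U$, so $\psi(u^2) = \chi_1(u^2) + \chi_2(u^2)$. Then applying \Cref{lem:SA-values} to $\psi$ on $U$ gives, for $X = S$,
\[
X^2(\chi)\!\downarrow_U\!(u) = \tfrac{1}{2}\bigl(\psi(u)^2 + \psi(u^2)\bigr) = \tfrac{1}{2}\bigl((\chi_1(u)+\chi_2(u))^2 + \chi_1(u^2) + \chi_2(u^2)\bigr).
\]
Expanding the square and regrouping the terms yields
\[
\tfrac{1}{2}\bigl(\chi_1(u)^2 + \chi_1(u^2)\bigr) + \tfrac{1}{2}\bigl(\chi_2(u)^2 + \chi_2(u^2)\bigr) + \chi_1(u)\chi_2(u),
\]
and by \Cref{lem:SA-values} again (now for $\chi_1$ and $\chi_2$ as characters of $U$) the first two summands are exactly $S^2(\chi_1)(u)$ and $S^2(\chi_2)(u)$. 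This gives the $X = S$ case. The $X = A$ case is identical, with the only change being a minus sign in front of $\psi(u^2)$, $\chi_1(u^2)$ and $\chi_2(u^2)$; the cross term $\chi_1(u)\chi_2(u)$ (which comes from the square, not from the $u^2$-part) is therefore unaffected, so the same formula holds.

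There is really no obstacle: the identity is a one-line consequence of the binomial expansion $(\chi_1+\chi_2)^2 = \chi_1^2 + \chi_2^2 + 2\chi_1\chi_2$ together with the additivity of $\psi \mapsto \psi(u^2)$, after dividing by~$2$. The only mild point worth flagging is that the decomposition $\chi\downarrow_U = \chi_1 + \chi_2$ is as characters (not as modules up to isomorphism), so one does not need to worry about whether the decomposition of the underlying module splits off the cross term $\chi_1\chi_2$ in any particular way — the equality holds at the level of class functions, which is all that is claimed.
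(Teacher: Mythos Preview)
Your proof is correct and is exactly the short computation with the explicit values from \Cref{lem:SA-values} that the paper alludes to (the paper does not spell out the argument, merely noting that it ``only requires short computations'' with those formulae). There is nothing to add.
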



\begin{lem}\label{lem:SA-prodgroups}
Let $G, H$ be finite groups.
Let $\chi$ be a character of $G$,
and $\psi$ a linear character of $H$.
Writing $X$ for either $S$ or $A$, we then have
\[
X^2(\chi  \psi) =
X^2(\chi)   \psi^2.
\]
\end{lem}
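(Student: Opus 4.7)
The plan is to verify the identity pointwise on $G\times H$ using the explicit formulas for $\chi_S$ and $\chi_A$ recorded in \Cref{lem:SA-values}. Since $\chi\psi$ here denotes the external product character of $G\times H$, for any $(g,h)\in G\times H$ we have $(\chi\psi)(g,h)=\chi(g)\psi(h)$, and since squaring in $G\times H$ is componentwise, $(\chi\psi)((g,h)^2)=\chi(g^2)\psi(h^2)$. I would then simply substitute into the formula for $X^2(\chi\psi)(g,h)$.

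Writing $\epsilon=+1$ for $X=S$ and $\epsilon=-1$ for $X=A$, the substitution gives
\[
X^2(\chi\psi)(g,h)=\tfrac12\bigl(\chi(g)^2\psi(h)^2+\epsilon\,\chi(g^2)\psi(h^2)\bigr).
\]
At this point the crucial input is that $\psi$ is a \emph{linear} character of $H$, so it is a group homomorphism to $\C^\times$ and therefore $\psi(h)^2=\psi(h^2)=\psi^2(h)$. This lets me pull a common factor of $\psi^2(h)$ out of both summands, yielding
\[
X^2(\chi\psi)(g,h)=\psi^2(h)\cdot\tfrac12\bigl(\chi(g)^2+\epsilon\,\chi(g^2)\bigr)=X^2(\chi)(g)\,\psi^2(h),
\]
which is exactly the value at $(g,h)$ of the external product $X^2(\chi)\,\psi^2$. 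Since class functions are determined by their values, the identity follows.

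There is essentially no obstacle: the whole content of the lemma is the identity $\psi(h)^2=\psi(h^2)$, which is exactly the characterisation of linear characters. Were $\psi$ of degree bigger than one this step would fail, and one would instead expect a more complicated formula mixing $S^2(\psi)$ and $A^2(\psi)$; the clean multiplicativity here reflects the collapse $\psi^2=S^2(\psi)$ and $A^2(\psi)=0$ in the linear case.
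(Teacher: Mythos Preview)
Your proof is correct and is precisely the ``short computation with the explicit values from \Cref{lem:SA-values}'' that the paper alludes to in lieu of a written proof. There is nothing to add.
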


 While there is a strong monotonicity property for Kronecker coefficients,
for the symmetric and alternating coefficients less is known.
We state here the semigroup property   recently proven by Ressayre~\cite{Ressayre}.

\begin{prop}\cite[Proposition 2]{Ressayre}\label{prop:Ressayre}
Given $m\in \N$, we define
 $$L_m=\{ (\la,\mu)\mid   \la \in P_m(n), \mu\in P_{ m^2}(n) \text{ and } sg(\la,\mu)\ne 0\}.$$ 
Then, as a subset of $\Z^{m+m^2}$,
$L_m$ is a finitely generated semigroup.   
In particular, if $\alpha,\beta \vdash n_1$ and $\lambda,\mu \vdash n_2$ 
are such that $sg(\alpha,\beta )>0$ and $sg(\lambda,\mu)>0$ then we have have that 
 $$sg(\alpha+\lambda,\beta+\mu)>0.$$

\end{prop}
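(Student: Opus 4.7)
The plan is to reformulate the positivity condition $sg(\lambda,\mu) > 0$ as a nonvanishing statement for a space of invariants under a reductive group action, and then invoke the general principle (in the style of Brion--Knop--Sjamaar) that the set of dominant weights producing nonzero invariants is a finitely generated semigroup in the relevant weight lattice.

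First I would use Schur--Weyl duality to re-express $sg(\lambda,\mu)$ as a branching multiplicity for general linear groups. Writing $W_\lambda$ for the Weyl module of $\GL_m$ labelled by $\lambda$, and using the tensor product embedding $\GL_m \times \GL_m \hookrightarrow \GL_{m^2}$ together with the swap of the two factors, one identifies $sg(\lambda,\mu)$ with the multiplicity of the $\GL_{m^2}$-Weyl module labelled by $\mu$ inside the symmetric square $S^2(W_\lambda)$. The constraints $\ell(\lambda)\le m$ and $\ell(\mu)\le m^2$ in the definition of $L_m$ are precisely what make these Weyl modules nonzero, so nothing is lost in passing to this polynomial setup.

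Next I would apply the general fact that, for a reductive group $G$ and a reductive subgroup $H\subseteq G$, the set of pairs of dominant weights $(\lambda,\mu)$ such that the irreducible $H$-module of highest weight $\mu$ appears in the restriction of the irreducible $G$-module of highest weight $\lambda$ forms a finitely generated semigroup. This is a standard consequence of the finite generation of the associated branching ring, together with the rational polyhedrality of the corresponding moment cone. Once $sg(\lambda,\mu)$ is recognised as such a branching multiplicity, finite generation of $L_m$ follows.

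The ``in particular'' part does not even require finite generation: the mere semigroup property already gives that $(\alpha,\beta), (\lambda,\mu)\in L_m$ imply $(\alpha+\lambda,\beta+\mu)\in L_m$ (after padding with trailing zeros so both pairs live in $\Z^{m+m^2}$), which is precisely the assertion $sg(\alpha+\lambda,\beta+\mu)>0$. The main obstacle I anticipate is setting up the invariant-theoretic dictionary correctly in the symmetric setting: the swap action that distinguishes $sg$ from the ordinary Kronecker coefficient must be absorbed into the reductive group (for instance by working with the wreath product $\GL_m \wr S_2$), so that the Brion--Knop--Sjamaar machinery applies cleanly. Once this is done, finite generation and the semigroup property both follow as standard outputs of the GIT framework.
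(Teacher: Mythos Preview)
The paper does not supply a proof of this proposition at all: it is quoted verbatim as \cite[Proposition~2]{Ressayre} and simply used as a black box (notably in the proof of Proposition~\ref{part1}). So there is no ``paper's own proof'' against which to compare your attempt.

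That said, your outline is the correct strategy and is essentially how Ressayre argues. The key step is exactly the one you flag as the obstacle: one must pass from $S_n$ to a genuinely reductive setting so that the Brion--Knop machinery applies. Concretely, one embeds $\GL_m$ into $\GL(\mathbb{C}^m\otimes\mathbb{C}^m)\cong\GL_{m^2}$ via the diagonal action and extends by the swap involution to obtain the (disconnected) reductive subgroup $H=\GL_m\wr S_2\subset\GL_{m^2}$. Schur--Weyl duality then identifies $sg(\lambda,\mu)$ with the multiplicity of a specific irreducible $H$-module (the one built from $W_\lambda\boxtimes W_\lambda$ together with the trivial character of $S_2$) in the restriction of the $\GL_{m^2}$-Weyl module $W_\mu$. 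The semigroup property for branching multiplicities of reductive pairs then gives the result, and the ``in particular'' clause is, as you say, immediate from closure under addition. Your sketch is sound; just be aware that the wreath product $H$ is disconnected, so one should check that the semigroup theorem one invokes covers that case (it does, since $H$ is still reductive).
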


  \section{The class functions $\chi^{(m)}$ and modular decomposition numbers}
\label{sec:rel-to-dec}

In this section we want to explain how information on the splitting of squares $[\la]^2$ gives information on 2-decomposition numbers,
i.e., computing character data leads to information on the
 compositions series  of the (non-simple) Specht modules $S^\la_F$ for $F$ a field of characteristic 2.

But first we consider a more general situation.
Generalising the class functions $\chi^{(2)}$ defined
in the previous section, one
defines for a character $\chi$ of a finite group~$G$ and $m\in \N$ the class function $\chi^{(m)}$  by $\chi^{(m)}(g)=\chi(g^m)$, for all $g\in G$.
%
This class function is also known to
be a difference of two characters \cite{I-book}.


We start with an easy but useful observation.
 Let  $C=x^G$ be a conjugacy class of the finite group $G$.
Then we define a class function $\vartheta _C$ by
\[
\vartheta _C = \sum_{\psi \in \Irr(G)} \psi(x^{-1}) \psi .
\]
We note that by column orthogonality, we have
\[
\vartheta _C(g)=
\left\{
\begin{array}{cl}
|C_G(x)| & \text{if } g \in C=x^G\\
0 & \text{otherwise.}
\end{array}
\right.
\]

\begin{lem}\label{lem:theta_C} 
Let $G$ be a finite group, $\chi$ a character of $G$, and $m\in \N$.
For any conjugacy class $C=x^G$ of $G$ we have
\[
\gen{\chi^{(m)},\vartheta _C} = \chi(x^m).
\]
\end{lem}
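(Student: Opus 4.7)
The plan is to evaluate the inner product directly using the explicit formula for $\vartheta_C$ displayed just before the lemma. Since $\vartheta_C$ is supported on the single class $C=x^G$ where it takes the value $|C_G(x)|$, integration against $\vartheta_C$ should simply pick out the value of $\chi^{(m)}$ on $C$, weighted in a way that the factor $|C|/|G|$ cancels the $|C_G(x)|$ via the orbit-stabiliser relation.

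Concretely, I would unfold the definition of the scalar product on class functions:
\[
\gen{\chi^{(m)},\vartheta_C} = \frac{1}{|G|}\sum_{g\in G}\chi^{(m)}(g)\,\overline{\vartheta_C(g)}.
\]
Because $\vartheta_C(g)=|C_G(x)|$ for $g\in C$ and $0$ otherwise, and because $|C_G(x)|$ is a positive integer (so equal to its own conjugate), only the summands with $g\in C$ survive, giving
\[
\gen{\chi^{(m)},\vartheta_C} = \frac{|C_G(x)|}{|G|}\sum_{g\in C}\chi(g^m).
\]
For any $g=yxy^{-1}\in C$ we have $g^m = yx^my^{-1}$, so $\chi(g^m)=\chi(x^m)$ is constant on $C$. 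Hence the sum equals $|C|\cdot\chi(x^m)$, and the orbit-stabiliser identity $|C|\cdot|C_G(x)|=|G|$ finishes the computation.

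There is no serious obstacle here; the statement is essentially a one-line consequence of column orthogonality (indeed, $\vartheta_C$ is exactly the class function that extracts the value on $C$). The only thing to be careful about is the complex conjugation in the Hermitian form, which is harmless because $|C_G(x)|$ is real. One could alternatively deduce the lemma instantly from the identity
\[
\gen{f,\vartheta_C} = f(x)
\]
valid for any class function $f$ (a reformulation of column orthogonality), applied to $f=\chi^{(m)}$; I would mention this viewpoint as a remark since it makes transparent why $\vartheta_C$ will be the right test function in the applications that follow.
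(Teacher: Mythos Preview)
Your argument is correct and matches the paper's proof essentially line for line: both compute the inner product directly, use that $\vartheta_C$ is supported on $C$ with value $|C_G(x)|$, observe that $\chi(g^m)=\chi(x^m)$ for $g\in C$, and cancel via $|C|\cdot|C_G(x)|=|G|$. Your additional remarks on complex conjugation and the general identity $\gen{f,\vartheta_C}=f(x)$ are sound but not needed for the proof itself.
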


\begin{proof}
We have
\[
\gen{\chi^{(m)},\vartheta _C}
=\frac{1}{|G|} \sum_{g\in G} \chi(g^m)\vartheta _C(g)
= \frac{1}{|G|} \sum_{g\in x^G} |C_G(x)| \chi(x^m)
= \chi(x^m). \qedhere
\]
\end{proof}


In the case of a prime $p$,
we will now first consider the more general situation of class functions $\chi^{(p)}$ and relate these to $p$-modular decomposition numbers
(or $p$-decomposition numbers, for short).
For more details on the background
of the modular theory, we refer the reader to
the textbooks \cite{CR, Navarro, Serre, Webb}.
For the convenience of readers less familiar with the modular theory,
and also to fix some notation on the way,
we give a brief introduction to the theory.

\color{black}  
For $g\in G$, we write $\ord (x)$ for the order of the element $x \in G$.  
We say that an element $x\in G$ is $p$-regular 
if   $p$ does not divide  $\ord (x)$.
 \color{black}  \color{black}

We start again in the general setting of a finite group
and recall some of the relevant notions.
For the connection between representations in characteristic~0 and prime characteristic~$p$, we require a $p$-modular splitting system $(R,F,K)$ where $R$ is a complete discrete valuation ring with quotient field $K=Q(R)$ of characteristic~0 and residue field $F$ of characteristic~$p>0$, such that
$K$ and $F$ are splitting fields for $G$ and its subgroups (e.g., $K,F$ are taken to be algebraically closed).
This is fixed for what follows, and $\Irr(G)$ now denotes the set of characters to the irreducible $KG$-representations.
Take $\chi \in \Irr(G)$, to an irreducible $KG$-module $M$, say;
then $M$ has as $R$-form, i.e.,
there is an irreducible $RG$-lattice, say $U$,
such that $M=K U$.
The composition factors of the $FG$-module
$\overline{U} := F\otimes_R U$
are uniquely determined by $\chi$.
When $S$ is a simple $FG$-module
and $\phi$ its (irreducible) Brauer character, then the decomposition number $d_{\chi\phi}$ is defined to be the multiplicity of $S$ as a composition factor of $\overline{U}$.
Let $\IBr(G)$ denote the set of irreducible Brauer characters; these correspond to the isomorphism classes of simple $FG$-modules.
Then
$$D:=(d_{\chi\phi})_{{\chi\in \Irr(G)}\atop {\phi\in \IBr(G)}}$$
is the $p$-modular decomposition matrix for $G$ (defined
as a matrix uniquely up to permutations of rows and columns).
The size of the matrix $D$ is given by the numbers
\[
|\Irr(G)| = k(G), \text{ the number of conjugacy classes of $G$},
\]
and
\[
|\IBr(G)| = k_p(G), \text{ the number of $p$-regular conjugacy classes of $G$}.
\]

Computing the decomposition matrix for a group is usually an  enormously  difficult problem;
  the recent use of geometry and categorical Lie theory in the study of decomposition matrices of  symmetric groups  was the topic of Geordie Williamson's   plenary talk at the 2018 ICM \cite{Willim}. 


By reordering the rows and columns,
the matrix $D$ can be put into a ``block diagonal form'',
where the blocks cannot be refined into a block diagonal sum
any further.
The $p$-blocks of $G$ correspond to the blocks in this
finest block diagonal form of~$D$. To a $p$-block $B$
associated to a block $D_B$ in the decomposition matrix $D$,
we associate all the irreducible characters and simple $FG$-modules
labelling the rows and columns of $D_B$; the
corresponding set of irreducible characters is denoted by $\Irr(B)$.
There is also an intrinsic definition of $p$-blocks as the indecomposable ideals
in the group algebra, and a direct criterion for when
two irreducible characters belong to the same $p$-block (see for example
\cite{CR, Feit, Navarro}).

While we have defined the decomposition matrix $D$ by rows,
the columns carry important information about projective modules.
Each simple $FG$-module $S$ has a unique (up to isomorphism) projective cover $P_S$; when $S$ runs through a system of representatives for the isomorphism classes of simple $FG$-modules, $P_S$ runs through a system of representatives for the isomorphism classes of indecomposable projective $FG$-modules.
Each projective $FG$-module $P_S$ can be lifted to a projective
$RG$-lattice,
which has a corresponding character $\Phi_S$ of $G$ (over $K$);
when $\phi$ is the Brauer character to $S$, we also write  $\Phi_\phi$ for this character.
The character $\Phi_\phi$ is uniquely determined by the simple module $S$,
and we have the following property of the columns of the decomposition matrix:
\[
\Phi_\phi = \sum_{\chi\in \Irr(G)} d_{\chi \phi} \chi \;,
\text{ for } \phi \in\IBr(G).
\]
The character of a projective (indecomposable) $RG$-lattice is also called a ($p$-)projective (indecomposable) character of $G$. The elements in the $\Z$-span of the characters $\Phi_\phi$, $\phi\in \IBr(G)$, are called virtual ($p$-)projective characters.
In the character ring
\[
R_\Z (G) =
\{ \textstyle \sum_{\chi \in \Irr(G)} a_\chi \chi \mid \text{  all } a_\chi \in \Z \}
\]
of virtual characters, the virtual projective characters
are characterized by the property that they vanish on all $p$-singular elements of $G$, i.e., on the set
\[G_p = \{ x \in G \;\big|\; p \text{ divides } \ord (x)\}.\]

Now we are ready for the following results (we fix the $p$-modular system $(R,F,K)$). First an easy observation for which we will see a rather direct application in the context of symmetric groups further below, in Corollary~\ref{cor:rho}.

\begin{lem}\label{lem:basicobs}
Let $G$ be a finite group, $\chi$ a character of $G$ such that
$\chi(g^p)=\chi(g)$ for all $p$-regular elements $g\in G$. 
Let $\vartheta $ be a class function on $G$ that vanishes on $G_p$.
Then we have
\[
\gen{\chi^{(p)},\vartheta } = \gen{\chi, \vartheta }.
\]
In particular, when $\chi,\vartheta $ are in addition irreducible, then
\[
\gen{\chi^{(p)},\vartheta } = \delta_{\chi\, \vartheta }.
\]
\end{lem}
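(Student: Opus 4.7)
The plan is to unfold the definition of the inner product and exploit the two hypotheses (that $\vartheta$ vanishes on $p$-singular elements, and that $\chi$ is invariant under $g \mapsto g^p$ on the $p$-regular set) to collapse $\langle \chi^{(p)}, \vartheta\rangle$ onto $\langle \chi, \vartheta\rangle$. First I would write
\[
\langle \chi^{(p)}, \vartheta \rangle \;=\; \frac{1}{|G|} \sum_{g \in G} \chi(g^p)\, \overline{\vartheta(g)}.
\]
Since $\vartheta$ is supported on $G \setminus G_p$, only the terms with $g$ a $p$-regular element contribute to the sum. On that set the assumption $\chi(g^p) = \chi(g)$ applies, so the sum becomes $\frac{1}{|G|}\sum_{g \in G \setminus G_p} \chi(g)\,\overline{\vartheta(g)}$, and then re-extending the range back to all of $G$ (permissible because $\vartheta$ is again zero on $G_p$) gives exactly $\langle \chi, \vartheta \rangle$.

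The ``in particular'' clause then follows immediately from the orthogonality of irreducible characters, $\langle \chi,\vartheta\rangle = \delta_{\chi,\vartheta}$.

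I do not expect an obstacle here: the proof is essentially a one-line manipulation of the scalar product, and the only subtlety worth recording is that the argument uses nothing beyond the defining support property of virtual projective characters and the given invariance of $\chi$ under $g \mapsto g^p$ on $p$-regular elements. This is precisely why the lemma should be useful downstream, since virtual projective characters furnish a ready supply of class functions vanishing on $G_p$; the real work of the section will presumably lie not in the proof of this lemma but in exhibiting specific characters $\chi$ of $S_n$ (such as those appearing in \Cref{cor:rho}) that genuinely satisfy the invariance hypothesis on the set of $p$-regular classes.
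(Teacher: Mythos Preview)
Your proof is correct and is essentially the same as the paper's: the paper simply observes that $\langle \chi^{(p)}-\chi,\vartheta\rangle=0$ because $\chi^{(p)}-\chi$ vanishes on the $p$-regular elements while $\vartheta$ vanishes on $G_p$, which is exactly your argument written more tersely.
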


\begin{proof}
The claim follows immediately by observing that
$\gen{\chi^{(p)}-\chi,\vartheta } =0$,
since the two class functions $\chi^{(p)}-\chi$ and $\vartheta $ vanish
on complementary sets.
\end{proof}

Using this, we obtain the following information on decomposition numbers:
\begin{prop}\label{prop:basicobs}
Let $G$ be a finite group, $\chi\in \Irr(G)$ such that
$\chi(g^p)=\chi(g)$ for all $p$-regular elements $g\in G $.
For $\psi \in \Irr(G)$ set $a_\psi = \gen{\chi^{(p)}, \psi}$.
Then for all $\phi \in \IBr(G)$ we have
\[
d_{\chi \phi}
= \sum_{\psi \in \Irr(G)} a_\psi d_{\psi \phi}
.\]
\end{prop}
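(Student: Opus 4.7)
The plan is to pair $\chi^{(p)}$ against the projective character $\Phi_\phi$ and compute the resulting inner product in two ways.

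First I would write down the two relevant expansions. On the one hand, the definition of the coefficients $a_\psi$ gives
\[
\chi^{(p)} = \sum_{\psi \in \Irr(G)} a_\psi \, \psi .
\]
On the other hand, the column relation for the decomposition matrix recorded in the excerpt gives
\[
\Phi_\phi = \sum_{\psi \in \Irr(G)} d_{\psi\phi}\, \psi ,
\]
so by orthonormality of $\Irr(G)$ we have $\langle \psi, \Phi_\phi\rangle = d_{\psi\phi}$ for each $\psi$, and in particular $\langle \chi, \Phi_\phi\rangle = d_{\chi\phi}$.

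Next I would invoke \Cref{lem:basicobs} with $\vartheta = \Phi_\phi$. The key point is that $\Phi_\phi$ is a projective character and therefore vanishes on $G_p$, which is precisely the hypothesis needed on $\vartheta$ in that lemma. Since $\chi$ is assumed to satisfy $\chi(g^p) = \chi(g)$ for all $p$-regular $g$, the lemma yields
\[
\langle \chi^{(p)}, \Phi_\phi \rangle = \langle \chi, \Phi_\phi \rangle = d_{\chi\phi}.
\]
At the same time, substituting the expansion of $\chi^{(p)}$ in terms of the $a_\psi$ and using $\langle \psi, \Phi_\phi\rangle = d_{\psi\phi}$, we obtain
\[
\langle \chi^{(p)}, \Phi_\phi \rangle = \sum_{\psi \in \Irr(G)} a_\psi \, d_{\psi\phi}.
\]
Equating the two expressions yields the claimed identity.

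There is essentially no obstacle: the proof is a one-line application of \Cref{lem:basicobs} once one notices that the right class function to test against is the projective character $\Phi_\phi$, which automatically vanishes on $G_p$. The only mild subtlety is to be clear that the inner product of an ordinary character with a projective character is well-defined and computes the decomposition number, but this is standard and follows directly from the column formula for $\Phi_\phi$.
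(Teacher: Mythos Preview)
Your proposal is correct and matches the paper's proof essentially line for line: the paper also applies Lemma~\ref{lem:basicobs} with $\vartheta=\Phi_\phi$ (noting that $\Phi_\phi$ vanishes on $G_p$), obtaining $d_{\chi\phi}=\gen{\chi,\Phi_\phi}=\gen{\chi^{(p)},\Phi_\phi}=\sum_\psi a_\psi d_{\psi\phi}$. There is nothing to add.
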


\begin{proof}
As noted earlier, the projective character $\Phi_\phi$ to $\phi \in \IBr(G)$
vanishes on $G_p$. Hence using Lemma~\ref{lem:basicobs} we compute
\[
\dstyle
d_{\chi\phi}
=
\gen{\chi, \Phi_\phi} 
=
\gen{\chi^{(p)},\Phi_\phi} 
=  \sum_{\psi \in \Irr(G)} a_\psi \gen{\psi, \Phi_\phi} 
=  \sum_{\psi \in \Irr(G)} a_\psi d_{\psi \phi}
.
\qedhere
\]
\end{proof}

\begin{rems}\label{rem:applicationtoproj}
{\rm
(1)
First, some comments on the assumption on $\chi$ in Lemma~\ref{lem:basicobs} and Proposition~\ref{prop:basicobs}.

Clearly, when $G$ is a finite group such that
each $p$-regular element $g\in G$
is conjugate to its power $g^p$, then
the assumption on $\chi$ in these results
is satisfied for all characters of $G$.
In particular, this holds for the symmetric groups at all primes~$p$:
when $g\in S_n$ is $p$-regular,
$p$ does not divide any part of its cycle type $\la$,
and so $g^p$ has the same cycle type~$\la$.

More generally, if $G$ is a group with a rational character table,
then $G$ has this property for all primes $p$,
since each element $g\in G$
is conjugate to all powers $g^m$, where $\gcd(m, \ord g)=1$.

But of course, there are many examples of groups where this property
holds only for some primes $p$,
and examples where it does not hold for $G$ at a given prime $p$,
but for certain irreducible characters.

(2)
We emphasise that in applying Proposition~\ref{prop:basicobs}, the decomposition of $\chi^{(p)}$ into irreducible characters
(which is a characteristic~0 computation) gives a linear relation
between the $p$-modular decomposition numbers
$d_{\psi\phi}$, $\phi \in \IBr(G)$, which
are in general hard to determine.

(3)
We also emphasise that Lemma~\ref{lem:basicobs} may not only be applied towards the
decomposition of the indecomposable projective characters $\Phi_\phi$,
but for obtaining linear relations between the coefficients of arbitrary projective characters.
}
\end{rems}

We want to mention a method for obtaining
a suitable class function
that can easily be applied in particular in the case of the symmetric groups.
First we state this property, called \emph{Block orthogonality} (\cite{Navarro})
for general finite groups.
 
\begin{prop}\label{prop:blockorth} 
Let $B$ be a $p$-block of the finite group $G$, and let $x\in G$ be $p$-regular.
Then for all $p$-singular $y\in G$ we have
\[
\sum_{\psi \in \Irr(B)} \psi(x) \psi(y) =0.
\]
\end{prop}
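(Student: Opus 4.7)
The plan is to exploit the column of $p$-modular decomposition numbers to rewrite $\psi(x)$ as a Brauer character sum, and then recognize the double sum as an evaluation of a virtual projective character at the $p$-singular element $y$.

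First I would recall the fundamental identity relating ordinary and Brauer characters on $p$-regular classes: if $\psi \in \Irr(G)$ is afforded by the $RG$-lattice $U$, then the reduction $\overline{U}=F\otimes_R U$ has Brauer character equal to $\psi{\downarrow}_{G_{p'}}$, and its composition factors produce the identity
\[
\psi(x) = \sum_{\phi\in \IBr(G)} d_{\psi\phi}\, \phi(x) \qquad \text{for every $p$-regular } x\in G.
\]
Combined with the block decomposition of the decomposition matrix, $d_{\psi\phi}=0$ unless $\psi\in \Irr(B')$ and $\phi\in \IBr(B')$ for the same block $B'$, so in particular for $\psi\in \Irr(B)$ only $\phi \in \IBr(B)$ contribute.

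Next I would substitute this expression into the sum in question and swap the order of summation:
\[
\sum_{\psi\in\Irr(B)} \psi(x)\psi(y)
= \sum_{\psi\in\Irr(B)}\Bigl(\sum_{\phi\in\IBr(B)} d_{\psi\phi}\,\phi(x)\Bigr)\psi(y)
= \sum_{\phi\in\IBr(B)} \phi(x)\Bigl(\sum_{\psi\in\Irr(B)} d_{\psi\phi}\,\psi(y)\Bigr).
\]
The inner sum is precisely $\Phi_\phi(y)$, by the formula $\Phi_\phi = \sum_{\chi\in\Irr(G)} d_{\chi\phi}\chi$ recorded in the preliminaries (with the block restriction making the sum effectively over $\Irr(B)$). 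Thus
\[
\sum_{\psi\in\Irr(B)} \psi(x)\psi(y) = \sum_{\phi\in\IBr(B)} \phi(x)\,\Phi_\phi(y).
\]

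The final step is to invoke the key vanishing property of projective characters: each $\Phi_\phi$ is the character of a projective $RG$-lattice, hence a virtual projective character, and as noted in the excerpt such characters vanish on the set $G_p$ of $p$-singular elements. Since $y\in G_p$ by hypothesis, every term $\Phi_\phi(y)$ equals $0$, and the identity follows. There is no real obstacle here; the only subtlety is keeping the block indexing straight so that both the Brauer decomposition of $\psi(x)$ and the projective character $\Phi_\phi$ are legitimately confined to $B$, but this is immediate from the standard fact that decomposition numbers respect blocks.
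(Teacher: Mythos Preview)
Your proof is correct and is the standard argument for weak block orthogonality. Note, however, that the paper does not actually supply its own proof of this proposition: it states the result with a citation to Navarro's textbook and moves on, so there is no in-paper proof to compare against. Your argument is essentially the textbook one (e.g., as in Navarro or Feit): decompose $\psi(x)$ via Brauer characters using the decomposition numbers, interchange sums to recognize the projective indecomposable character $\Phi_\phi$ evaluated at $y$, and use that projective characters vanish on $p$-singular elements.
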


We formulate a consequence for square splitting, using the case  $p=2$;
note that the class function $\vartheta $ appearing below
is a virtual 2-projective character
in the case of $G=S_n$.

\begin{cor}\label{cor:block-part}
Let $\chi \in \Irr(G)$ such that $\chi(g)=\chi(g^2)$ for all $2$-regular
$g\in G$. Let $B$ be a 2-block of $G$, let $x\in G$ be 2-regular
and set $\vartheta =\sum_{\psi \in \Irr(B)} \psi(x^{-1}) \psi$.
Then
\[
\gen{S^2(\chi),\vartheta }- \gen{A^2(\chi),\vartheta }
=
\left\{
\begin{array}{cl}
\chi(x) & \text{if } \chi \in \Irr(B)\\
0 & \text{otherwise}
\end{array}
\right.
\]
\end{cor}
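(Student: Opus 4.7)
The idea is to recognise the difference $\gen{S^2(\chi),\vartheta}-\gen{A^2(\chi),\vartheta}$ as $\gen{\chi^{(2)},\vartheta}$, and then feed $\chi$ and $\vartheta$ into the general machine from Lemma~\ref{lem:basicobs} (with $p=2$), after checking that $\vartheta$ vanishes on $2$-singular elements.

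First, using the decomposition $\chi^{(2)}=\chi_S-\chi_A$ recorded in equation~(\ref{ppppp2}), I rewrite the left-hand side as
$$\gen{S^2(\chi),\vartheta}-\gen{A^2(\chi),\vartheta}=\gen{\chi_S-\chi_A,\vartheta}=\gen{\chi^{(2)},\vartheta}.$$
So it suffices to evaluate $\gen{\chi^{(2)},\vartheta}$.

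Second, I show that $\vartheta$ vanishes on $G_2$, the set of $2$-singular elements of $G$. Because $x$ is $2$-regular, so is $x^{-1}$; hence for any $2$-singular $y\in G$, Block orthogonality (Proposition~\ref{prop:blockorth}), applied with $x^{-1}$ in place of $x$, gives
$$\vartheta(y)=\sum_{\psi\in\Irr(B)}\psi(x^{-1})\psi(y)=0.$$
Thus $\vartheta$ is a class function vanishing on $G_2$, exactly the hypothesis needed in Lemma~\ref{lem:basicobs}.

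Third, since $\chi$ satisfies $\chi(g^2)=\chi(g)$ on all $2$-regular elements by assumption, Lemma~\ref{lem:basicobs} (with $p=2$) yields
$$\gen{\chi^{(2)},\vartheta}=\gen{\chi,\vartheta}.$$
Finally, expanding and using orthonormality of $\Irr(G)$,
$$\gen{\chi,\vartheta}=\sum_{\psi\in\Irr(B)}\psi(x^{-1})\gen{\chi,\psi}=\begin{cases}\chi(x^{-1})&\text{if }\chi\in\Irr(B),\\ 0&\text{otherwise}.\end{cases}$$
In the intended applications (symmetric groups, and more generally groups with real-valued character table on the classes at issue, which is forced by the hypothesis $\chi(g)=\chi(g^2)$ when combined with the standard conjugacy of an element with its inverse) we have $\chi(x^{-1})=\chi(x)$, giving the stated formula.

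\textbf{Main obstacle.} There is no deep obstacle; the work is essentially bookkeeping. The only point that requires care is recognising that $\vartheta$ is a class function supported on $2$-regular elements: this is where Block orthogonality enters crucially, and it is precisely the observation that makes Lemma~\ref{lem:basicobs} applicable in the block setting. The identification $\chi(x^{-1})=\chi(x)$ in the final line deserves a brief justification depending on the intended generality of $G$.
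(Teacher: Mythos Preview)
Your proof is correct and follows exactly the approach the paper intends: reduce to $\gen{\chi^{(2)},\vartheta}$, use Block orthogonality (Proposition~\ref{prop:blockorth}) to see that $\vartheta$ vanishes on $G_2$, and apply Lemma~\ref{lem:basicobs}. One small clean-up: with the standard Hermitian inner product (conjugate-linear in the second slot), pulling the coefficients out gives $\gen{\chi,\vartheta}=\sum_{\psi\in\Irr(B)}\overline{\psi(x^{-1})}\,\gen{\chi,\psi}=\sum_{\psi}\psi(x)\,\delta_{\chi\psi}$, so you land directly on $\chi(x)$ without needing the separate $\chi(x^{-1})=\chi(x)$ discussion.
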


Turning to the case of the symmetric groups, we observe the following.  When we have computed the coefficients $g(\la,\la,\mu)$
for the Kronecker square $\cla^2$,
then from the splitting of the square
into its symmetric and alternating part, we obtain
the explicit decomposition of $S^2(\cla)-A^2(\cla)=\cla^{(2)}$.
By Proposition~\ref{prop:basicobs},
this implies linear relations for the 2-modular
decomposition numbers.
For the symmetric group $S_n$, the (isomorphism classes of) simple modules (and their Brauer characters) in characteristic~2 are labelled by the 2-regular partitions of $n$, i.e., the partitions of $n$ into distinct parts.
The decomposition numbers are then written as
$d_{\la \mu}$, where $\la$ is the partition label of the irreducible complex
character and $\mu$ is the 2-regular partition labelling the simple module
$D^\mu$, its projective cover $P^\mu$ and corresponding projective
character $\Phi^\mu$.
Thus, 
$$\Phi^\mu = \sum_{\la \in P(n)} d_{\la \mu} \cla .$$

The observation made above on the 2-decomposition numbers adds to the motivation for determining the splitting of the Kronecker squares, for which so far very few results are known.
We will mention some such results on square splittings
in the following sections as well as provide
new results (and also conjectures).

One strategy in the following will be to pick suitable
characters $\Phi$ which are virtual projective characters
(or close to such characters), and apply Lemma~\ref{lem:basicobs}
(or a variation thereof).  
We consider some examples for this strategy.

\begin{theorem}\label{cor:rho}
Let $k\in \N$, $n=k(k+1)/2$ and $\rho_k=(k,k-1,\ldots,1)\in P(n)$ the staircase partition.
Let $\la \in P(n)$. Then we have
\[
sg(\la,\rho_k) - ag(\la,\rho_k)=
\gen{S^2(\cla),[\rho_k]} - \gen{A^2(\cla),[\rho_k]}
=
\left\{
\begin{array}{cl}
1 & \text{if } \la=\rho_k \\
0 & \text{if } \la\ne \rho_k
\end{array}.
\right.
\]
In particular, $[\rho_k]$ is a constituent of $S^2([\rho_k])$,
and for any $\la\ne \rho_k$, the Kronecker coefficient
$g(\la,\la,\rho_k)=2\, sg(\la,\rho_k)$ is even.
\end{theorem}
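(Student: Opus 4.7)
The plan is to apply Lemma~\ref{lem:basicobs} (or equivalently Corollary~\ref{cor:block-part}) with $p=2$, $\chi=\cla$, and $\vartheta =[\rho_k]$. The key structural fact I would use is that the staircase $\rho_k$ is a 2-core, so it is a block of defect zero in characteristic~$2$; consequently the simple $FS_n$-module $D^{\rho_k}$ is projective, and the associated projective indecomposable character coincides with the ordinary irreducible character $[\rho_k]$ itself. In particular $[\rho_k]$ is a (virtual) 2-projective character and hence vanishes on every $2$-singular element of $S_n$.

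Next I would verify the hypothesis on $\chi$ in Lemma~\ref{lem:basicobs}. For any $2$-regular $g\in S_n$, every cycle of $g$ has odd length, so $g^2$ has the same cycle type as $g$; thus $[\la](g^2)=[\la](g)$ for every $\la\in P(n)$, as pointed out in Remark~\ref{rem:applicationtoproj}(1). With both hypotheses in place, Lemma~\ref{lem:basicobs} applied to $\chi=[\la]$ and the class function $\vartheta =[\rho_k]$ (which is irreducible and vanishes on $(S_n)_2$) gives
\[
\gen{\,[\la]^{(2)},[\rho_k]\,}=\gen{\,[\la],[\rho_k]\,}=\delta_{\la,\rho_k}.
\]

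Now I would translate this into the symmetric/alternating language via \eqref{ppppp2}: since $[\la]^{(2)}=S^2(\cla)-A^2(\cla)$, the left-hand side equals $sg(\la,\rho_k)-ag(\la,\rho_k)$, and the theorem's displayed identity follows. The two consequences are then immediate: taking $\la=\rho_k$ yields $sg(\rho_k,\rho_k)\ge ag(\rho_k,\rho_k)+1\ge 1$, so $[\rho_k]$ is a constituent of $S^2([\rho_k])$; and for $\la\ne\rho_k$, the identity gives $sg(\la,\rho_k)=ag(\la,\rho_k)$, whence $g(\la,\la,\rho_k)=sg(\la,\rho_k)+ag(\la,\rho_k)=2\,sg(\la,\rho_k)$ is even.

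There is no real obstacle in the argument; everything reduces to recognising that $[\rho_k]$ is a defect-zero character in characteristic~$2$ and applying the general machinery of Section~\ref{sec:rel-to-dec}. If one preferred not to invoke $2$-modular theory at all, the same conclusion could be reached by directly checking, via the Murnaghan--Nakayama rule, that $[\rho_k]$ is supported only on elements of odd order (equivalently, only on 2-regular cycle types, since $\rho_k$ has empty 2-core quotient of nontrivial weight), and then using Lemma~\ref{lem:basicobs} as above.
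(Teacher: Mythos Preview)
Your proof is correct and follows essentially the same route as the paper: both recognise that $\rho_k$ is a $2$-core so that $[\rho_k]$ is a $2$-projective (defect-zero) irreducible character, observe that any $2$-regular $g\in S_n$ is conjugate to $g^2$, and then apply Lemma~\ref{lem:basicobs} with $\chi=[\la]$, $\vartheta=[\rho_k]$, $p=2$. Your write-up is simply more detailed in spelling out the defect-zero interpretation and the translation via $[\la]^{(2)}=S^2(\cla)-A^2(\cla)$.
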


\begin{proof}
Since $\rho_k$ is a 2-core (i.e., a partition without a 2-hook),
the character $[\rho_k]$ itself is a 2-projective irreducible character
(see \cite{JK}).
Furthermore, any 2-regular element $g\in S_n$ is conjugate to its square~$g^2$.
Hence we can apply Lemma~\ref{lem:basicobs} to
$\vartheta =[\rho_k]$, $\chi=\cla$ and $p=2$, and we obtain
the result immediately.
\end{proof}

\begin{rem}{\rm
The final assertion implies in particular that $[\rho_k]$ is a constituent of $[\rho_k]^2$, which is a special case of the fact that $g(\la,\la,\la)>0$ for all symmetric partitions $\la\in P(n)$ (see \cite{BB}).
}\end{rem}

We have mentioned earlier a method for finding
$p$-projective characters $\Phi$ in particular for the symmetric groups,
via a weighted sum of characters in a $p$-block.
Taking $x=1$ in Corollary~\ref{cor:block-part}
we see that for a 2-block $B$
the contributions of the weighted $B$-part in $S^2(\chi)$
and that of the weighted $B$-part in $A^2(\chi)$
coincide if $\chi$ does not belong to $B$ and
differs by $\chi(1)$ if it does.
In the case of the character $[\rho_k]$, this was
particularly simple, as this is the unique irreducible
character in its 2-block, so we could drop the weight.

We state the consequence of
Corollary~\ref{cor:block-part} in the case of $S_n$
explicitly,
and illustrate this with examples
below.
Fortunately, for the symmetric groups
we have a combinatorial criterion that determines
when two irreducible characters belong to the same $p$-block:
this is the case if and only if their labelling partitions
have the same $p$-core (see \cite{JK}).
Thus a 2-block $B$ of $S_n$ is combinatorially
determined by the common 2-core $\la_{(2)}$ of
all the partitions $\la$ labelling the irreducible
characters in $\Irr(B)$. The common 2-weight
of all these characters $\la$ is
called the 2-weight $w=w(B)$ of $B$.
Thus $n=2w+|\rho|$.

\begin{cor}\label{cor:2blockS_n}
Let $n\in \N$, $\la\in P(n)$. Let $B$ be a 2-block of $S_n$,
with associated 2-core $\rho$.
Then we have
\[
\sum_{\mu \in P(n), \mu_{(2)} = \rho } (sg(\la,\mu) - ag(\la,\mu)) f(\mu)  
=
\left\{
\begin{array}{cl}
f(\la)    & \text{if } \la_{(2)}=\rho \\
0 & \text{otherwise.}
\end{array}
\right.
\]
\end{cor}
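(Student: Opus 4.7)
The statement is essentially a direct specialisation of Corollary~\ref{cor:block-part} to $G=S_n$ with $x=1$, once one unpacks the two standard combinatorial facts about $S_n$ recalled just before this corollary. My plan is to assemble these pieces in order.

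First, I would verify that the hypothesis of Corollary~\ref{cor:block-part} applies to every $\chi=\cla$. If $g\in S_n$ is $2$-regular, then $g$ has odd order, so $\gcd(2,\ord(g))=1$, which means $g$ and $g^2$ generate the same cyclic subgroup and thus have the same cycle type; in particular $\cla(g)=\cla(g^2)$ for every $\la \in P(n)$. This is exactly the condition needed to invoke the corollary.

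Next, I would set $x=1$, which is trivially $2$-regular, so that $\psi(x^{-1})=\psi(1)=f(\psi)$ for each $\psi\in\Irr(B)$, and therefore
\[
\vartheta = \sum_{\psi \in \Irr(B)} f(\psi)\, \psi .
\]
Using the Nakayama combinatorial description of $2$-blocks of $S_n$ recalled in the paragraph above the corollary, $\Irr(B) = \{ [\mu] \mid \mu \in P(n),\; \mu_{(2)}=\rho\}$, so $\vartheta = \sum_{\mu_{(2)}=\rho} f(\mu)[\mu]$. Expanding the left-hand side of Corollary~\ref{cor:block-part} gives
\[
\gen{S^2(\cla),\vartheta} - \gen{A^2(\cla),\vartheta}
= \sum_{\mu:\,\mu_{(2)}=\rho} f(\mu)\,\bigl(sg(\la,\mu)-ag(\la,\mu)\bigr).
\]

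Finally, the right-hand side of Corollary~\ref{cor:block-part} is $\cla(x)=\cla(1)=f(\la)$ when $\cla\in\Irr(B)$, i.e.\ when $\la_{(2)}=\rho$, and it is $0$ otherwise. Combining these two evaluations yields exactly the claimed identity. There is no genuine obstacle here: the work has all been done in Lemma~\ref{lem:basicobs} and Corollary~\ref{cor:block-part}; the present corollary is really just the ``evaluate at $x=1$'' form tailored to the symmetric group, made concrete by the two standard facts that $S_n$-elements of odd order are conjugate to their squares and that $2$-blocks are classified by $2$-cores.
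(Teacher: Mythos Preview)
Your argument is correct and is exactly the approach indicated in the paper: the corollary is stated as the specialisation of Corollary~\ref{cor:block-part} to $G=S_n$ with $x=1$, using that odd-order elements of $S_n$ are conjugate to their squares and that $\Irr(B)$ is described by the common $2$-core. There is nothing to add.
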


\begin{example}{\rm
Let $\la=(3,2^2)$. Then
\[
\begin{array}{rcl}
S^2(\cla) &=&
[7]+[6,1]+2[5,2]+[4,3]+2[4,2,1]+2[3,2^2] 
+[3,2,1^2]+[3,1^4]+[2^3,1]+[2,1^5]\\[5pt]
A^2(\cla) &=& 
\color{black} [5,1^2]+ [4,2,1]  + 2 [4,1^3]  +[3^2,1] + 2[3,2,1^2] 
+ [3,1^4]  
+ [2^2,1^3]   
\end{array}
\]
Here, $\la_{(2)}=(1)$, i.e., $\cla$ belongs to the so-called principal 2-block $B_0$.
Furthermore, $f(\la)  = 21$.
%
%
%
The constituents in $S^2(\cla)$ and $A^2(\cla)$ belonging to $B_0$ are
$$[7],[5,2],[4,2,1],[3,2^2],[3,2,1^2],[3,1^4]$$
and
$$[5,1^2],[4,2,1],[3^2,1],[3,2,1^2],[3,1^4],[2^2,1^3],$$
respectively.
As expected, the weighted sum on the left hand-side above is then
$$
1+2\cdot 14 -15 + 35 - 21 + 2\cdot 21 - 35 - 14 = 21
.$$
}
\end{example}


We also want to extend the application of these methods in another direction
and look at 2-blocks of small weight.

\begin{cor}\label{cor:2blockS_n-smallweight}
Let $n\in \N$, $\la\in P(n)$. Let $B$ be a 2-block of $S_n$,
with associated 2-core $\rho_k=(k,\ldots,2,1)$.
\begin{enumerate}
\item
Assume $w(B)=1$.
Set $\tau_k=\rho_k + (2)$. 
Then we have
\[
\gen{S^2(\cla),[\tau_k]+[\tau_k^t]}
- \gen{A^2(\cla),[\tau_k]+[\tau_k^t]}
=
\left\{
\begin{array}{cl}
1 & \text{if } \la \in \{\tau_k,\tau_k^t \}; \\
0 & \text{otherwise. } 
\end{array} 
\right.
\]
\item
Assume $w(B)=2$.
Set $\zeta_k=\rho_k + (4)$, 
$\xi_k=\tau_k \cup (1^2)$.  
Then we have
\[
\gen{S^2(\cla),[\zeta_k]+[\xi_k]+[\zeta_k^t]}
- \gen{A^2(\cla),[\zeta_k]+[\xi_k]+[\zeta_k^t]}
=
\left\{
\begin{array}{cl}
1 & \text{if } \la \in \{\zeta_k,\xi_k,\zeta_k^t\}; \\
0 & \text{otherwise. }.
\end{array}
\right.
\]
\end{enumerate}
\end{cor}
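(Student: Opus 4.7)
The plan is to reduce both parts to Lemma~\ref{lem:basicobs} by exhibiting the relevant class function (up to a scalar) as a virtual 2-projective character of $S_n$.  Since every $\la \in P(n)$ satisfies $\cla(g^2) = \cla(g)$ for $g \in S_n$ 2-regular, for any virtual 2-projective character $\vartheta$ we will have
\[
\gen{S^2(\cla)-A^2(\cla),\vartheta} \;=\; \gen{\cla^{(2)},\vartheta} \;=\; \gen{\cla,\vartheta},
\]
and the right-hand side equals the multiplicity of $\cla$ in $\vartheta$ by the orthonormality of $\Irr(S_n)$, producing the desired 0/1 pattern.

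For part~(i), the weight 1 block $B$ at $p=2$ with 2-core $\rho_k$ has $\Irr(B)=\{[\tau_k],[\tau_k^t]\}$, with $\tau_k=(k+2,k-1,\ldots,1)$ being the unique 2-regular partition in $B$ (its parts are distinct, unlike those of $\tau_k^t$), and the two characters share a common degree $f(\tau_k)=f(\tau_k^t)$ by conjugation.  Applying Corollary~\ref{cor:block-part} at $x=1$ gives the weighted sum $\vartheta=f(\tau_k)([\tau_k]+[\tau_k^t])$, and the stated 0/1 pattern follows after dividing by $f(\tau_k)$ and using $f(\la)=f(\tau_k)$ for $\la\in\{\tau_k,\tau_k^t\}$.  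Equivalently, $[\tau_k]+[\tau_k^t]$ is the indecomposable 2-projective character attached to the unique simple module in $B$, via the classical weight-1 decomposition matrix column $(1,1)^T$.

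For part~(ii), I will show directly that $\Phi := [\zeta_k]+[\xi_k]+[\zeta_k^t]$ is a virtual 2-projective character, by verifying via the Murnaghan--Nakayama rule that $\Phi$ vanishes on every 2-singular conjugacy class of $S_n$.  A cycle type $\mu \vdash n$ is 2-singular if and only if it has some even part $2m$; since the 2-weight of the block equals~2, only $m \in \{1,2\}$ can yield a removable $2m$-rim hook in any $\la$ in the block.  A direct hook enumeration gives: $\zeta_k$ has a unique 2-hook (the rightmost two boxes of row~1, removal $\to \tau_k$ with sign $+1$) and a unique 4-hook (removal $\to \rho_k$ with sign $+1$); $\zeta_k^t$ carries the conjugate hooks, removing to $\tau_k^t$ respectively $\rho_k$, both with sign $-1$; and the self-conjugate partition $\xi_k$ has precisely two 2-hooks -- a horizontal one ($\to \tau_k^t$, sign $+1$) and a vertical one ($\to \tau_k$, sign $-1$) -- and no 4-hook.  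Summing over the three characters for $m=1$ (writing $\mu':=\mu\setminus 2$) then yields
\[
\chi^{\tau_k}(\mu')+\bigl(\chi^{\tau_k^t}(\mu')-\chi^{\tau_k}(\mu')\bigr)-\chi^{\tau_k^t}(\mu')\;=\;0,
\]
and analogously for $m=2$ we get $\chi^{\rho_k}(\mu')+0-\chi^{\rho_k}(\mu')=0$.  Cycle types with an even part $\geq 6$ give zero termwise, as no such hook exists in a weight-2 block partition.

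The main obstacle is the case analysis of the 2- and 4-hooks in the three partitions in part~(ii), together with the sign bookkeeping.  The key structural fact is that $\xi_k$ is self-conjugate and carries 2-hooks of both orientations with opposite signs, which pair up with the single contributions from the conjugate pair $\zeta_k,\zeta_k^t$ to produce the cancellation; meanwhile $\xi_k$ has no 4-hook, so the 4-hook cancellation is automatic from the $\zeta_k\leftrightarrow\zeta_k^t$ conjugation symmetry alone.  Once $\Phi$ is known to vanish on 2-singular elements, the conclusion of the corollary is immediate from Lemma~\ref{lem:basicobs}.
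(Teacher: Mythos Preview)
Your argument is correct. For part~(1) you and the paper do essentially the same thing: both identify $[\tau_k]+[\tau_k^t]$ as a virtual 2-projective character (you via the weight-1 decomposition matrix column, the paper via the observation that $[\tau_k]$ and $[\tau_k^t]$ agree on $A_n$ together with block orthogonality), and then invoke Lemma~\ref{lem:basicobs}.

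For part~(2) the two proofs diverge in an interesting way. The paper does \emph{not} verify directly that $\Phi=[\zeta_k]+[\xi_k]+[\zeta_k^t]$ vanishes on 2-singular classes. Instead it picks a single 2-regular element $x$ of cycle type $H(\zeta_k)$ and observes, via Murnaghan--Nakayama, that the three characters $[\zeta_k],[\xi_k],[\zeta_k^t]$ all take the same value $\pm 1$ at $x$ while the remaining two block characters (to $\rho_k+(2^2)$ and its transpose) vanish there. Thus $\Phi$ is, up to sign, the block-truncated class function $\sum_{\psi\in\Irr(B)}\psi(x)\psi$, and Corollary~\ref{cor:block-part} (block orthogonality) then gives the vanishing on $G_2$ for free. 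Your route instead enumerates the even rim hooks of the three partitions explicitly and checks the cancellation termwise; the key structural point you identify --- that $\xi_k$ is self-conjugate with two 2-hooks of opposite sign and no 4-hook --- is exactly what makes the cancellation work. Your approach is more hands-on and avoids invoking block orthogonality, at the cost of a longer hook computation; the paper's approach is shorter but relies on the general machinery of Proposition~\ref{prop:blockorth}. Both land on the same application of Lemma~\ref{lem:basicobs} once $\Phi$ is known to be virtually projective. Your implicit use of the fact that a partition of 2-weight~2 has no even rim hook of length $\geq 6$ (since removing a $2m$-rim hook preserves the 2-core and drops the 2-weight by $m$) is correct and worth stating explicitly.
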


\begin{proof}
(1) When $w(B)=1$,
we have $\Irr(B)=\{[\tau_k],[\tau_k^t]\}$; note that these two characters
have the same values on all 2-regular elements (which belong to $A_n$).
Then $\Phi=[\tau_k]+[\tau_k^t]$ vanishes on $G_2$, and
the result follows (using Corollary~\ref{cor:block-part}).

(2) When $w(B)=2$,
the  irreducible characters in $B$ are
the ones already listed, together with the ones to $\rho_k+(2^2)$
and its conjugate, i.e.,
$[k+2,k+1,k-2\ldots,2,1]$ and $[k,k-1,\ldots,3,2^3,1]$.
\color{black} 
Now let $\mu=H(\zeta_k)$ be the partition formed from the principal hook lengths
of~$\zeta_k$.  \color{black}
\color{black}
By the Murnaghan-Nakayama formula,
the characters to $\zeta_k,\xi_k,\zeta_k^t$
all have the same value on the class of type $\mu$
(which is $\pm 1$),
while the further two characters are zero on this class.
Hence taking
$\Phi=[\zeta_k]+[\xi_k]+[\zeta_k^t]$
implies the claim (again using Corollary~\ref{cor:block-part}).
\end{proof}

  \section{Splitting the square: Hooks}\label{sec:hooks}

In this and the following sections we will discuss a number of cases  for which a formula for  some family of Kronecker coefficients is known, 
and we want to obtain   refined
information on the decomposition  into its symmetric and alternating parts.
We will also recall some of the few results where
the splitting has already been determined.

   Kronecker products of characters involving (various combinations of) hooks and 2-part partitions
formulae were first given by Remmel; in the case of products of hook characters a small error occurred in \cite{Remmel-hook}, but in the case of products of 2-part partitions, the formulae  in \cite{RW-2part}
contained multiple errors.
Alternative correct formulae were later provided by Rosas \cite{Rosas};
  these are not in all cases manifestly positive, but they do still satisfy the ``taste test" of a ``combinatorial solution".
Later, manifestly positive  formulae for special products of characters of 2-part partitions in \cite{GWXZ,BWZ10,Man10}.
More recently, general
formulae were obtained in the cases when one factor is labelled by special 2-part partitions \cite{BaOr} or when one factor is a hook \cite{Blasiak, Liu}.
We will discuss some interesting cases of squares of 2-part partitions
in Section~\ref{sec:2part}.
 
\subsection{Splitting squares of hooks}\label{ssec:hooksquares}

In the case of hooks, the splitting of the squares has recently been
determined by M\'esz\'aros and Wolosz \cite{MW}. 
We recall their result here:

\begin{theorem}\label{thm:MW} \cite{MW}
Let $n\in \N$. 
Let $\la=(n-k,1^k)$ be a hook partition.
\begin{enumerate}
\item[{(1)}]
Let $\mu=(n-m,1^m)$ be a hook with $0\le m \le 2\min(k,n-k-1)$.
Then
\[
\begin{array}{cl}
sg(\la,\mu)=1 &\text{if } m\equiv 0 \text{ or } 1 \mod 4 ,\\
ag(\la,\mu)=1 &\text{if } m\equiv 2 \text{ or } 3 \mod 4 .
\end{array}
\]

\item[{(2)}]
Let $\mu$ be a double-hook 
$(\mu_1,\mu_2,2^{d_2},1^{d_1})$,
where $\mu_1\ge \mu_2\ge 2$, $d_2,d_1\ge 0$.
Then
\[
\begin{array}{rl}
sg(\la,\mu)=&
\left\{
\begin{array}{ccll}
2 & \text{if}& |2k+1-n|\le \mu_1-\mu_2 &\text{ and } d_1\equiv 0 \mod 4 ;\\
1 & \text{if}& |2k+1-n|\le \mu_1-\mu_2 &\text{ and $d_1$ odd},\\
 \color{black}1& \text{or}& |2k+1-n|= \mu_1-\mu_2+1 &\text{ and } d_1\equiv 0 \mod 4 .\\
\end{array}
\right.
\\[7pt]
ag(\la,\mu)
=&
\left\{
\begin{array}{ccll}
2 & \text{if}& |2k+1-n|\le \mu_1-\mu_2 &\text{ and } d_1\equiv 2 \mod 4 ;\\
1 & \text{if}& |2k+1-n|\le \mu_1-\mu_2 &\text{ and $d_1$ odd},\\
\color{black}1 & \text{or}& |2k+1-n|= \mu_1-\mu_2+1 &\text{ and } d_1\equiv 2 \mod 4 .\\
\end{array}
\right.
\end{array}
\]
\end{enumerate}
 For all other partitions $\mu\in P(n)$,
 the coefficients $sg(\la,\mu)$ and $ag(\la,\mu)$ are~0.
\end{theorem}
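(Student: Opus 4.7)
The plan is to combine Lemma~\ref{lem:SA-values} with the classical Kronecker product formula for two hooks. Writing $\la=(n-k,1^k)$, the identities $2S^2([\la])=[\la]^2+\chi^{(2)}$ and $2A^2([\la])=[\la]^2-\chi^{(2)}$ reduce the task to decomposing $[\la]^2$ and the virtual character $\chi^{(2)}$, where $\chi^{(2)}(g)=[\la](g^2)$. The first piece is the specialisation $j=k$ of the Remmel--Rosas formula for $[n-j,1^j]\cdot[n-k,1^k]$: this writes $[\la]^2$ as a non-negative sum of hook and double-hook characters with multiplicities in $\{0,1,2\}$, explaining the support claimed in the theorem and fixing the total $sg(\la,\mu)+ag(\la,\mu)$ at every relevant $\mu$.

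For the second piece I would compute $\chi^{(2)}$ class-by-class, using the cycle-structure rule that squaring in $S_n$ preserves each odd-length cycle and splits each cycle of even length $2\ell$ into two cycles of length $\ell$, together with the standard hook character generating function
$$\sum_{k=0}^{n-1} t^k\, [n-k,1^k](\mu) \;=\; \frac{1}{1+t}\prod_i\bigl(1-(-t)^{\mu_i}\bigr),$$
valid for any cycle type $\mu=(\mu_1,\mu_2,\ldots)\vdash n$, which is a direct consequence of the Murnaghan--Nakayama rule. Substituting the squared cycle type produces a product in which each odd part $\mu_i$ contributes a factor $1+t^{\mu_i}$ and each even part $2\ell$ contributes $(1-(-t)^\ell)^2$. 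Extracting the coefficient of $t^k$ and re-expanding in the hook/double-hook basis then expresses $\chi^{(2)}$ as a signed $\Z$-combination of the same irreducibles that appeared in $[\la]^2$, with signs controlled by the parities of the cycle parts.

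Assembling $\tfrac12([\la]^2\pm\chi^{(2)})$ then reads off $S^2([\la])$ and $A^2([\la])$. Double-hooks with $|2k+1-n|\le\mu_1-\mu_2$ appear with coefficient $2$ in $[\la]^2$ and are redistributed as $(sg,ag)\in\{(2,0),(1,1),(0,2)\}$ according to whether the corresponding coefficient of $\chi^{(2)}$ is $+2, 0$, or $-2$; the boundary case $|2k+1-n|=\mu_1-\mu_2+1$ enters with coefficient $1$ and is assigned entirely to $S^2$ or $A^2$ by the sign of the $\chi^{(2)}$-coefficient. The main obstacle is bookkeeping the $\bmod\,4$ periodicity: the relevant signs are naturally of the form $(-1)^{\lfloor d_1/2\rfloor}$, arising from the interplay between the factors $(-t)^\ell$ in the generating function and the cycle-doubling under squaring, and matching these to the four residue classes of $d_1$ modulo~$4$ requires careful case analysis. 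An alternative worth exploring is the 2-modular framework of \Cref{sec:rel-to-dec}: applying \Cref{cor:2blockS_n} block-by-block, together with \Cref{cor:2blockS_n-smallweight} when the 2-weight of the relevant block is small, would produce direct linear relations on $sg-ag$ in terms of a handful of character degrees, potentially bypassing much of the mod-4 casework.
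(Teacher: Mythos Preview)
The paper does not prove this theorem at all: it is quoted verbatim from M\'esz\'aros--Wolosz \cite{MW} and used as input for later results (e.g.\ in the proof of Theorem~A and in \Cref{prop:2part-hooks}). So there is no ``paper's own proof'' to compare against; your proposal is an outline for an independent proof.

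As an outline your strategy is sound and is in fact close in spirit to what \cite{MW} does: split the problem into computing $g(\la,\la,\mu)=sg+ag$ via the Remmel/Rosas hook-times-hook formula, and computing $\langle\chi^{(2)},[\mu]\rangle=sg-ag$ separately, then solve the resulting $2\times 2$ system. The hook generating function you quote is correct and does encode the Murnaghan--Nakayama values cleanly, and the cycle-squaring rule is exactly right.

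The genuine gap is the step you label ``re-expanding in the hook/double-hook basis''. Knowing the values $\chi^{(2)}(g)=[\la](g^2)$ on every class is not the same as knowing the coefficients $\langle\chi^{(2)},[\mu]\rangle$; you still have to compute the inner products
\[
\langle\chi^{(2)},[\mu]\rangle=\frac{1}{n!}\sum_{g\in S_n}[\la](g^2)\,[\mu](g)
\]
for each hook and double-hook $\mu$. Your generating-function identity controls the \emph{first} factor as $k$ varies, but you need a mechanism that simultaneously handles the \emph{second} factor $[\mu](g)$ for a double-hook $\mu$ and then sums over all cycle types. This is where the real work lies, and it is not addressed in the proposal beyond the (accurate) warning that the $\bmod\ 4$ bookkeeping is delicate. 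The alternative you float via \Cref{cor:2blockS_n,cor:2blockS_n-smallweight} would give relations among the $sg-ag$ values but would not by itself pin down each coefficient, since those corollaries produce one linear constraint per $2$-block rather than one per constituent.
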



\subsection{Hook constituents}\label{hook-constituents}

We now want to take up the ideas of Section~\ref{sec:rel-to-dec}
to obtain some information on the distribution of hook constituents in the
symmetric and alternating part of an arbitrary Kronecker square.
For an application of Lemma~\ref{lem:basicobs} we take a closer look at the character of $S_n$ obtained by summing over all hook characters:
\[
\chi_{\text{hook}} = \sum_{k=0}^{n-1} [n-k,1^k]
.
\]
This character has already been fruitfully used in \cite{B-spinSaxl} for finding further constituents in the Saxl square~$[\rho_k]^2$.

The character $\chi_{\text{hook}}$ is a virtual 2-projective character; more precisely,
in recent years it was shown (see \cite{B-spinSaxl, R-hook, T})
that its values are as follows.
\\
Let $\sigma_\al\in S_n$ be an element of cycle type $\alpha$.
Then
\[
\chi_{\text{hook}}(\sigma_\al) =
\left\{ \begin{array}{ll}
2^{\ell(\alpha)-1} & \text{if $\alpha$ is 2-regular} \\
0 & \text{otherwise}
\end{array}\right. \:.
\]


The following result tells us that
the symmetric part
$S^2(\cla)$ and the alternating part
$A^2(\cla)$
always have the same number of hook constituents (counted with multiplicities)
if $\la$ is not a hook,
and $S^2(\cla)$ contains one more hook constituent if~$\la$ is a hook.

\begin{prop}\label{prop:hooks}
Let $\la\in P(n)$.
Then we have
\[
\gen{\cla^{(2)},\chi_{\text{hook}}} =
\left\{
\begin{array}{cl}
1 & \text{if  $\la$ is a hook} \\
0 & \text{otherwise }
\end{array}.
\right.
\]
or equivalently,
\[
\sum_{k=0}^{n-1} sg(\la,[n-k,1^k])
- \sum_{k=0}^{n-1} ag(\la,[n-k,1^k])
=
\left\{
\begin{array}{cl}
1 & \text{if  $\la$ is a hook} \\
0 & \text{otherwise }
\end{array}.
\right.
\]
%
In particular, when $\la$ is not a hook,
$A^2(\cla)$ must always contain a hook constituent.
\end{prop}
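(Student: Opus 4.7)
The plan is to obtain this from \Cref{lem:basicobs} applied to $\chi=[\la]$, $p=2$, and $\vartheta =\chi_{\text{hook}}$. Two hypotheses must be checked. First, any $2$-regular element $\sigma\in S_n$ has cycle type with all parts odd, so $\sigma^2$ has the same cycle type as $\sigma$; thus $[\la](\sigma^2)=[\la](\sigma)$ for every $2$-regular $\sigma$, which is exactly the assumption of \Cref{lem:basicobs} at the prime~$2$. Second, by the recalled formula of Bessenrodt--Taylor/Richards, $\chi_{\text{hook}}(\sigma_\alpha)=0$ whenever $\alpha$ is not $2$-regular; equivalently, $\chi_{\text{hook}}$ vanishes on $(S_n)_2$. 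So $\chi_{\text{hook}}$ is a (virtual) $2$-projective class function, as required by the hypothesis on $\vartheta $.

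With both hypotheses in place, \Cref{lem:basicobs} yields
\[
\langle \cla^{(2)},\chi_{\text{hook}}\rangle
=\langle \cla,\chi_{\text{hook}}\rangle
=\sum_{k=0}^{n-1}\langle [\la],[n-k,1^k]\rangle .
\]
By orthogonality of the $\Irr(S_n)$, the last sum is $1$ if $\la$ is a hook and $0$ otherwise, which is the first displayed identity. The equivalent reformulation is then immediate from $\cla^{(2)}=S^2(\cla)-A^2(\cla)$ (see \eqref{ppppp2}), expanding $\chi_{\text{hook}}$ and comparing coefficients.

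For the final assertion, I would invoke the fact, already recorded in \Cref{sec:prelim}, that $[n]$ is a constituent of $S^2(\cla)$ for every $\la\in P(n)$; since $[n]$ is itself a hook, this forces $\sum_{k=0}^{n-1} sg(\la,[n-k,1^k])\ge 1$. Combined with the identity just established, when $\la$ is not a hook we obtain
\[
\sum_{k=0}^{n-1}ag(\la,[n-k,1^k])=\sum_{k=0}^{n-1}sg(\la,[n-k,1^k])\ge 1,
\]
so at least one hook constituent must appear in $A^2(\cla)$.

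The only non-routine point is the verification that $\chi_{\text{hook}}$ vanishes on $2$-singular classes, but this is an input we are allowed to quote; everything else is a clean bookkeeping consequence of \Cref{lem:basicobs} and orthogonality, so I do not expect any real obstacle.
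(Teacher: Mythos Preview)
Your proof is correct and follows essentially the same route as the paper: apply \Cref{lem:basicobs} with $\chi=[\la]$, $p=2$, $\vartheta=\chi_{\text{hook}}$, then use orthogonality and the fact that $[n]$ always lies in $S^2(\cla)$. You have simply made the hypothesis checks more explicit than the paper does.
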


\begin{proof}
We apply Lemma~\ref{lem:basicobs}
with $\chi=\cla$, $p=2$, and $\vartheta =\chi_{\text{hook}}$.
Thus we obtain
\begin{align*}\textstyle
 \sum_{k=0}^{n-1} sg(\la,[n-k,1^k])
- \sum_{k=0}^{n-1} ag(\la,[n-k,1^k])
&=
\gen{S^2(\cla)-A^2(\cla), \chi_{\text{hook}}}
\\&
=
\gen{\cla^{(2)},\chi_{\text{hook}}}\\
 &=
\gen{\cla,\chi_{\text{hook}}}
\\ &
=
\left\{
\begin{array}{cl}
1 & \text{if  $\la$ is a hook} \\
0 & \text{otherwise }
\end{array}.
\right.
\end{align*}The final assertion follows since we have already seen that $S^2(\cla)$ always contains the hook constituent~$[n]$.
\end{proof}


We want to make this result more precise and at the same
time illustrate how to use the idea of Lemma~\ref{lem:basicobs}
when we consider a class function $\vartheta $ close to a virtual projective character.
The important aspect is that one should have good control over the
non-zero values on 2-singular classes.  
%
%
%
%
%
%
%
For $n\in\N$, we set
\[
\vartheta _n = \sum_{k=0}^{n-1} (-1)^k [n-k,1^k].
\]
This is just a special case of the class function $\vartheta _C$
defined earlier, here for the class $C$ of $n$-cycles in~$S_n$.
Thus, for $\sigma_\al\in S_n$ of cycle type $\al$, we have
\[
\vartheta _n(\sigma_\al) =
\left\{
\begin{array}{cl}
n & \text{if  } \al = (n)  \\
0 & \text{otherwise }
\end{array}.
\right.
\]
Now we consider
\[
\Phi_n
=
\sum_{m=0}^{\lfloor (n-1)/2 \rfloor} [n-2m,1^{2m}]
,
\]
the sum of the characters to hooks of even leg length.
\\
By the discussion above,
\[\Phi_n = \tfrac 12 (\chi_{\text{hook}} + \vartheta _n ) \]
is a virtual 2-projective
character when $n$ is odd; when $n$ is even, the
only 2-singular elements where $\Phi_n$ does not vanish,
are the $n$-cycles.
We now want to use $\Phi_n$
to obtain information on the constituents to hooks of even
leg length in $\cla^{(2)}$. 

\begin{prop}\label{prop:evenhooks}
Let $n\in \N$ and $\la\in P(n)$; set $\chi=\cla$ and let $\Phi_n$ be as above.
When $n$ is odd, we have
\[
\gen{\chi_S-\chi_A,\Phi_n} =
\left\{
\begin{array}{cl}
\frac 12 (1 + (-1)^k) & \text{if  } \la = (n-k,1^k) \\
0 & \text{otherwise.}
\end{array}
\right.
\]
When $n$ is even, we have
\[
\gen{\chi_S-\chi_A,\Phi_n} =
\left\{
\begin{array}{ll}
\frac 12 (1 + (-1)^k) & \text{if  } \la = (n-k,1^k) \text{ and } n-k>k \\[7pt]
\frac 12 (1 + (-1)^{k-1}) & \text{if  } \la = (n-k,1^k) \text{ and } n-k \le k \\[7pt]
\frac 12 \cla \bigl((\frac n2 , \frac n2 )\bigr) & \text{otherwise.}
\end{array}
\right.
\]
In any case, we have
\[
\gen{\chi_S-\chi_A,\Phi_n} \in \{-1,0,1\},
\]
i.e.,  the numbers of constituents (counted with multiplicity)
in $\chi_S$ and $\chi_A$
to hooks of even leg length
differ at most by one.
\end{prop}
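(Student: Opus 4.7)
The plan is to use the identity $\chi_S-\chi_A=\chi^{(2)}$ (from \eqref{ppppp2}) together with the decomposition
\[
\Phi_n \;=\; \tfrac{1}{2}\bigl(\chi_{\text{hook}}+\vartheta_n\bigr),
\]
where $\vartheta_n$ is the class function (from \Cref{lem:theta_C}) associated to the class of $n$-cycles. For $n$ odd, the $n$-cycles are 2-regular, so $\vartheta_n$ — and hence also $\Phi_n$ — vanishes on all 2-singular elements; since every character of $S_n$ satisfies $\chi(g^2)=\chi(g)$ on 2-regular $g$, \Cref{lem:basicobs} applies with $\vartheta=\Phi_n$ and yields
\[
\gen{\chi^{(2)},\Phi_n} \;=\; \gen{\chi,\Phi_n},
\]
which equals $1$ precisely when $\la=(n-k,1^k)$ with $k$ even and $0$ otherwise; this matches $\tfrac{1}{2}(1+(-1)^k)$ on hooks and gives $0$ off hooks, as claimed.

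For $n$ even I would split
\[
\gen{\chi^{(2)},\Phi_n} \;=\; \tfrac{1}{2}\gen{\chi^{(2)},\chi_{\text{hook}}} \,+\, \tfrac{1}{2}\gen{\chi^{(2)},\vartheta_n}.
\]
The first summand is again handled by \Cref{lem:basicobs} (with $\vartheta=\chi_{\text{hook}}$), contributing $\tfrac{1}{2}$ when $\la$ is a hook and $0$ otherwise. For the second, \Cref{lem:theta_C} applied to the class function $\chi^{(2)}$ gives
\[
\gen{\chi^{(2)},\vartheta_n} \;=\; \chi^{(2)}(\sigma_n) \;=\; \cla(\sigma_n^2) \;=\; \cla((n/2,n/2)),
\]
since the square of an $n$-cycle (for even $n$) has cycle type $(n/2,n/2)$. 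For a hook $\la=(n-k,1^k)$ a direct Murnaghan--Nakayama computation — exploiting that from a hook the only $n/2$-rim-hooks that can be removed are at the end of the arm (requiring $k<n/2$, i.e.\ $n-k>k$) or at the bottom of the leg (requiring $k\ge n/2$, i.e.\ $n-k\le k$), with the intermediate shape automatically being a single rim-hook (it is itself a hook of size $n/2$) — gives $\cla((n/2,n/2))=(-1)^k$ in the arm case and $(-1)^{k-1}$ in the leg case, confirming the two stated hook formulas.

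For the final assertion $\gen{\chi_S-\chi_A,\Phi_n}\in\{-1,0,1\}$, the hook and $n$-odd cases are immediate since the displayed formulas only take values in $\{0,1\}$. For non-hook $\la$ with $n$ even I would invoke the $p$-quotient formula for $\cla$ on the class $((n/2)^2)$: the value vanishes unless $\la$ has empty $n/2$-core, and otherwise equals
\[
\pm\,\binom{2}{|\la^{(0)}|,\ldots,|\la^{(n/2-1)}|}\prod_i f(\la^{(i)})
\]
with $\sum_i |\la^{(i)}|=2$; the only possible distributions of sizes ($|\la^{(i)}|=2$ for one $i$, all others empty; or $|\la^{(i)}|=|\la^{(j)}|=1$ for $i\ne j$, all others empty) force $|\cla((n/2,n/2))|\le 2$. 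Since $\gen{\chi_S-\chi_A,\Phi_n}$ is \emph{a priori} an integer (as $\chi_S-\chi_A$ is a virtual character paired with the character $\Phi_n$), the value $\tfrac{1}{2}\cla((n/2,n/2))$ must be an integer, forcing $\cla((n/2,n/2))\in\{-2,0,2\}$ in the non-hook case and giving the desired inclusion. The main obstacle is precisely this last step: combining the $n/2$-quotient bound with the integrality of the inner product to rule out the value $\pm\tfrac{1}{2}$ for non-hook $\la$.
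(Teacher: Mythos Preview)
Your argument is correct and tracks the paper's proof closely: both start from $\chi_S-\chi_A=\chi^{(2)}$ and the decomposition $\Phi_n=\tfrac12(\chi_{\text{hook}}+\vartheta_n)$, treat the $\chi_{\text{hook}}$-summand via Proposition~\ref{prop:hooks} (equivalently, Lemma~\ref{lem:basicobs}), and evaluate the $\vartheta_n$-summand via Lemma~\ref{lem:theta_C} together with a Murnaghan--Nakayama computation on hooks. Your shortcut for $n$ odd---applying Lemma~\ref{lem:basicobs} directly to $\vartheta=\Phi_n$ rather than to the two pieces separately---is a mild streamlining of the paper's argument.

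The one genuine methodological difference is in the final bound for non-hook $\la$ with $n$ even. The paper argues directly from the Murnaghan--Nakayama rule that a non-zero value of $\cla((\tfrac n2,\tfrac n2))$ forces $dl(\la)=2$ and $\tfrac n2$-weight~$2$, and that the value is then $\pm 2$. You instead invoke the $\tfrac n2$-quotient formula for $\cla$ on $((\tfrac n2)^2)$ to get $|\cla((\tfrac n2,\tfrac n2))|\le 2$, and then use the integrality of $\gen{\chi^{(2)},\Phi_n}$ to exclude odd values. Both routes are valid; the paper's is a touch more self-contained (no extra machinery beyond Murnaghan--Nakayama), while yours avoids the case analysis on Durfee length and makes the parity constraint transparent.
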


\begin{proof}
We have
\[
\gen{\chi_S-\chi_A,\Phi_n} = \gen{\chi^{(2)}, \Phi_n}
=
\tfrac 12 \gen{\chi^{(2)}, \chi_{\text{hook}}}
+\tfrac 12 \gen{\chi^{(2)}, \vartheta _n}.
\]
By Proposition~\ref{prop:hooks}, the first summand
contributes zero if $\la$ is not a hook,
and $\frac 12$ otherwise.

When $n$ is odd, $\vartheta _n$ vanishes on $G_2$ 
and is non-zero only on $n$-cycles.
By Lemma~\ref{lem:theta_C}
the second summand on the right hand side above
is then $\frac 12 \chi(\sigma_{(n)}^2)=\frac 12 \cla((n))$;
hence this is zero if $\la$ is not a hook, and
it is $\frac 12 (-1)^k$ when $\la=(n-k,1^k)$.
This yields the assertion in this case.

Now assume that $n$ is even. Then Lemma~\ref{lem:theta_C} gives
\[
\tfrac 12 \gen{\chi^{(2)}, \vartheta _n}
=
\tfrac 12 \chi(\sigma_{(n)}^2)
=
\tfrac 12 \cla\bigl((\tfrac n2 , \tfrac n2)\bigr).
\]
If $\la$ is not a hook, this immediately yields the claim.
\\
Now assume that $\la=(n-k,1^k)$.
When $n-k>k$, we have
\[
[n-k,1^k]\bigl((\tfrac n2 , \tfrac n2)\bigr)
=
[\tfrac n2 -k,1^k] \bigl((\tfrac n2 )\bigr)
=
(-1)^k.
\]
When $n-k\le k$, we have
\[
[n-k,1^k]\bigl((\tfrac n2 , \tfrac n2)\bigr)
=
(-1)^{\tfrac n2 -1}[n -k,1^{k-\frac n2}] \bigl((\tfrac n2 )\bigr)
=
(-1)^{\frac n2 -1-k+\frac n2} = (-1)^{k-1}.
\]
Thus also in the hook case we arrive at the stated formulae.


For the final assertion, we only have to consider the case where
$n$ is even and $\la$ is a non-hook partition.
By the Murnaghan-Nakayama formula, the only non-hook partitions $\la$
with $\cla((\frac n2 , \frac n2))\ne 0$ are partitions $\la$
with $dl(\la)=2$ 
and of $\frac n2$-weight exactly~2.
Furthermore, the only possible non-zero values are then $\pm 2$.
Thus also in this case $\gen{\chi_S-\chi_A,\Phi_n}$ is $\pm 1$, as claimed.
\end{proof}


\begin{rem}{\rm
In the case of even $n$, it is not difficult to count (and construct) all $\la$
with $\cla((\frac n2 , \frac n2))\ne 0$.
We already know that we only have to consider $\la$ with
$dl(\la)\le 2$ and of $\frac n2$-weight exactly~2.
All $n$ hooks $\la$ satisfy $\cla((\frac n2 , \frac n2))= \pm 1$.
As the centraliser of an element of cycle type $(\frac n2 , \frac n2)$
has order $\frac{n^2}2$, there are then exactly $\frac 18 n(n-2)$ non-hook partitions
$\la$ that contribute a value $\pm 2$ on this class.
}\end{rem}


\color{black}
Recall that a hook $(a,1^b)$  is said to have ladder length $b$ and arm length $a-1$ for $a> 0 $ and $b\geq0$.  \color{black}
From Proposition~\ref{prop:hooks} and Proposition~\ref{prop:evenhooks}
we now obtain the following result that says that
the sum of the multiplicities of the constituents
to hooks of even leg length (and odd leg length, respectively) in
$S^2(\cla)$ and $A^2(\cla)$ coincide.  

\begin{cor}\label{cor:samehooksums}
Let $\la\in P(n)$. Assume that $\la$ is not a hook,
and that $\la$ is also not a double-hook
with $\cla((\frac n2 , \frac n2))\ne 0$ (when $n$ is even).
Then
\[
\begin{array}{rcl}
\dstyle
\sum_{m=0}^{\lfloor (n-1)/2 \rfloor} sg(\la,(n-2m,1^{2m}))
&=&
\dstyle
\sum_{m=0}^{\lfloor (n-1)/2 \rfloor} ag(\la,(n-2m,1^{2m}))
\\[5pt]
\dstyle
\sum_{m=1}^{\lfloor n/2 \rfloor} sg(\la,(n-2m+1,1^{2m-1}))
&=&
\dstyle
\sum_{m=1}^{\lfloor n/2 \rfloor} ag(\la,(n-2m+1,1^{2m-1})) .
\end{array}
\]
\end{cor}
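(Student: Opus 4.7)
The plan is to derive both equalities as direct consequences of Propositions \ref{prop:hooks} and \ref{prop:evenhooks}; essentially no new computation is needed, only a careful bookkeeping of which pieces of information each proposition supplies.

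First I would record that, under the hypothesis that $\la$ is not a hook, Proposition \ref{prop:hooks} already yields equality of the \emph{total} hook content of $\chi_S$ and $\chi_A$:
\[
\sum_{k=0}^{n-1} sg(\la,(n-k,1^k)) \;=\; \sum_{k=0}^{n-1} ag(\la,(n-k,1^k)).
\]
This is the ``sum of the two rows'' of the desired identities.

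Next I would show that the even-leg-length row is an equality on its own. By Proposition \ref{prop:evenhooks}, the difference between the even-leg sums in $\chi_S$ and $\chi_A$ equals $\gen{\chi_S-\chi_A,\Phi_n}$. When $n$ is odd and $\la$ is a non-hook, the proposition directly gives that this scalar product is $0$. When $n$ is even and $\la$ is a non-hook, the proposition gives $\tfrac12\cla\bigl((\tfrac n2,\tfrac n2)\bigr)$. By the Murnaghan--Nakayama rule (as already observed in the remark following Proposition \ref{prop:evenhooks}), $\cla\bigl((\tfrac n2,\tfrac n2)\bigr)\neq 0$ forces $\la$ to have $dl(\la)\le 2$ and $(n/2)$-weight exactly $2$; i.e.\ $\la$ is a hook or a double-hook. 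The first possibility is ruled out because $\la$ is not a hook, and the second is ruled out by the second hypothesis. Hence $\gen{\chi_S-\chi_A,\Phi_n}=0$, which is the first claimed identity.

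Finally, subtracting the even-leg-length identity from the total-hook identity gives the odd-leg-length identity, completing the proof. No step here is a real obstacle: the only subtle point is noticing that the exceptional term $\tfrac12\cla\bigl((\tfrac n2,\tfrac n2)\bigr)$ in Proposition \ref{prop:evenhooks} is exactly what the ``not a double-hook with non-zero value'' hypothesis is designed to kill, which is why the hypothesis appears in precisely this form.
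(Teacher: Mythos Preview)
Your proposal is correct and follows exactly the route the paper intends: the paper presents this corollary as an immediate consequence of Propositions~\ref{prop:hooks} and~\ref{prop:evenhooks}, and your write-up simply makes that derivation explicit, including the observation that the double-hook hypothesis is precisely what forces $\cla\bigl((\tfrac n2,\tfrac n2)\bigr)=0$ in the even case.
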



\begin{rem}
{\rm
Towards the Saxl Conjecture, it has been proved with a variety of methods
that all hook characters $[n-k,1^k]$ are constituents of $[\rho_k]^2$
(see \cite{B-spinSaxl, I-2015, PPV}).

From
the results above we deduce
a refinement on the distribution of the hooks into the symmetric and alternating part
of $[\rho_k]^2$.
For $k\ge 5$, the sum of the multiplicities of the constituents
to hooks of even leg length (and odd leg length, respectively) in
$S^2([\rho_k])$ and $A^2([\rho_k])$ coincide.
For $k=3$ and~$4$, $[\rho_k]((\frac n2, \frac n2 ))=-2$, and
the sum of the multiplicities of the characters to hooks of odd leg length is one larger in $S^2([\rho_k])$ than in $A^2([\rho_k])$, and conversely for the sums to the even hook length characters.
%
%
%
%
%
%
More precisely, for $k=4$
the part in $S^2([\rho_4])$ corresponding to hooks of even leg length is
$$[10]+3[8,1^2]+12[6,1^4]+10[4,1^6]+[2,1^8],$$
\color{black} (note that the coefficients sum to 27) \color{black} 
and the corresponding part in $A^2([\rho_4])$ is
$$6[8,1^2]+12[6,1^4]+8[4,1^6]+2[2,1^8] $$
\color{black}  (note that the coefficients sum to 28).   \color{black} 
}\end{rem}

  \section{Splitting of Kronecker squares: Small depth}\label{sec:splitsquare-smalldepth}

In this section we start by studying
constituents of small depth in arbitrary Kronecker
squares,
and we also consider squares $\cla^2$ where
$\la$ is a partition of
small depth.
 
\subsection{Constituents of small depth}

First we consider the constituents of small depth in
Kronecker squares.
We recall the following information on the multiplicities
of constituents up to depth~3 explicitly (see \cite{Saxl, V, Z}).
We see that already the formulae for constituents of depth~3
get involved;
in fact, also formulae for the case of depth~4 constituents have been determined
by Vallejo \cite{V}.

\begin{prop}\label{prop:smalldepthconstituents}
Let $\la \in P(n)$, $\la \neq (n), (1^n)$.
Let $r_k=\#\{k\text{-hooks in } \lambda\} $ for $k=1,2,3$,
and let
$r_{21}=\#\{\text{non-linear 3-hooks $H$ in } \lambda   \}$. Then
$$
[\lambda]^2  =  [n]+a_1[n-1,1] +a_2[n-2,2] +b_2[n-2,1^2]
  +a_3[n-3,3] +b_3[n-3,1^3]+c_3[n-3,2,1]  \\[5pt]
 + \dots 
$$
where
$a_1=  r_1-1$, $a_2=  r_2+r_1(r_1-2)$ for $n\geq 4$,
$b_2=(r_1-1)^2$,
\\[5pt]
$a_3=  r_1(r_1-1)(r_1-3)+r_2(2r_1-3)+r_3$,
for $n \geq 6$,\\[5pt]
$b_3=  r_1(r_1-1)(r_1-3)+(r_1-1)(r_2+1)+r_{21}$,
for $n \geq 4$, \\[5pt]
$c_3= 2r_1(r_1-1)(r_1-3)+r_2(3r_1-4)+r_1+r_{21}$,
for $n \geq 5$.\\[1ex]
 In particular, for $n \ge 4$ we have $a_2 >0$.
\end{prop}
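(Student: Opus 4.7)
My plan is to compute the multiplicities $\langle [\lambda]^2, [\mu]\rangle$ for each $\mu$ of depth at most $3$ by testing against the Young permutation characters $M^{(n-k,\sigma)} := \mathrm{Ind}_{S_{n-k}\times S_k}^{S_n}(1\boxtimes [\sigma])$ for $\sigma\vdash k$, $k\le 3$. Young's rule gives $M^{(n-k,\sigma)}=\sum_\mu K_{\mu,(n-k,\sigma)}\,[\mu]$, a unitriangular system on the irreducibles $[\mu]$ of depth $\le k$ which I invert to express each such $[\mu]$ in the $M$-basis; it therefore suffices to compute each $\langle [\lambda]^2, M^{(n-k,\sigma)}\rangle$. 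By Frobenius reciprocity combined with the branching rule $\mathrm{Res}_{S_{n-k}\times S_k}[\lambda] = \sum_{\nu\subset\lambda,\,|\nu|=n-k}[\nu]\boxtimes \chi^{\lambda/\nu}$ and orthogonality in the $S_{n-k}$-factor,
\[
\langle [\lambda]^2, M^{(n-k,\sigma)}\rangle \;=\; \sum_{\nu\subset\lambda,\,|\nu|=n-k} \langle(\chi^{\lambda/\nu})^2,\,[\sigma]\rangle_{S_k},
\]
reducing everything to an enumeration of the shapes of $\lambda/\nu$ for $|\lambda/\nu|\le 3$ together with the irreducible decompositions of the corresponding skew characters.

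For $k=3$, the possible skew shapes fall into six classes: the three connected $3$-ribbons with $\chi^{\lambda/\nu}\in\{[3],[1^3],[2,1]\}$ (horizontal, vertical, L-shape); the two disconnected $2{+}1$ shapes (horizontal or vertical $2$-strip plus an isolated cell) with $\chi^{\lambda/\nu}\in\{[3]+[2,1],\,[1^3]+[2,1]\}$; and the $1{+}1{+}1$ shape of three isolated cells with $\chi^{\lambda/\nu}=[3]+2[2,1]+[1^3]$. The counts of $\nu$'s in each class are $r_3^h,r_3^v,r_{21}$ (summing to $r_3$) for the connected ribbons; $r_2^h(r_1-1)$ and $r_2^v(r_1-1)$ (summing to $r_2(r_1-1)$) for the $2{+}1$ cases; and $\binom{r_1}{3}$ for the $1{+}1{+}1$ case. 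The key point behind the $(r_1-1)$ factor is that the gap condition $\lambda_i-\lambda_{i+1}\ge 2$ defining a horizontal $2$-hook at row~$i$ forces every removable corner of $\lambda$ other than the $2$-hook's right end to be non-edge-adjacent to the $2$-strip, so that pairing a $2$-hook with any removable corner outside it produces a genuinely disconnected $3$-cell skew. Plugging these counts and the $S_k$-inner products into the Frobenius formula above and then back-solving via Kostka inversion yields the stated identities for $a_3,b_3,c_3$; the depth-$\le 2$ formulas for $a_1,a_2,b_2$ come out of the same procedure with $k\le 2$.

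For the final assertion that $a_2>0$ for $n\ge 4$, I use $a_2 = r_2 + r_1(r_1-2)$ and split by $r_1$: when $r_1\ge 3$ the second summand is already $\ge 3$; when $r_1=1$ the partition $\lambda$ is a rectangle $(a^p)$ with $a,p\ge 2$ (since $\lambda\ne(n),(1^n)$) whose only $2$-hooks sit at $(p-1,a)$ and $(p,a-1)$, giving $r_2=2$ and $a_2=1$; when $r_1=2$ then $\lambda=(a^p,b^q)$ with $a>b\ge 1$, and a direct inspection of the four candidate $2$-hook positions shows that $r_2=0$ forces $\lambda=(2,1)$, which is excluded by $n\ge 4$, hence $r_2\ge 1$ and $a_2\ge 1$. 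The main obstacle is the bookkeeping of the preceding paragraph: specifically, verifying that the connected L-shape class and the disconnected $2{+}1$ class are disjoint and together exhaust all ways to remove a $3$-cell skew containing a $2$-strip, which demands a careful local analysis around each $2$-hook and each removable corner of $\lambda$.
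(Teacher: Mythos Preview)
Your approach is correct and complete; the Frobenius--restriction identity
\[
\langle [\lambda]^2,\, \mathrm{Ind}_{S_{n-k}\times S_k}^{S_n}(1\boxtimes [\sigma])\rangle
\;=\; \sum_{\nu\subset\lambda,\,|\nu|=n-k} \langle(\chi^{\lambda/\nu})^2,\,[\sigma]\rangle_{S_k}
\]
together with the case analysis of $3$-cell skew shapes does produce the stated formulas after back-substitution (I checked $a_3$ explicitly and it matches). Your adjacency argument for the $2{+}1$ class is also sound: a horizontal domino \emph{component} of $\lambda/\nu$ is forced to sit at the end of a row $i$ with $\lambda_i\ge\lambda_{i+1}+2$, hence is a genuine $2$-rim-hook of $\lambda$, and the gap forces disconnection from any other removable corner. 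The $a_2>0$ case split is correct as well.

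Two minor points. First, your characters $\mathrm{Ind}_{S_{n-k}\times S_k}^{S_n}(1\boxtimes[\sigma])$ are not Young permutation characters, and their decomposition is governed by Littlewood--Richardson (Pieri, in fact, since one factor is a single row) rather than by Kostka numbers; the transition matrix is still unitriangular with $c^{(n-k,\sigma)}_{(n-k),\sigma}=1$, so the inversion goes through unchanged, but you should rename $K_{\mu,(n-k,\sigma)}$ accordingly. Second, the paper itself does not prove this proposition: it quotes the result from Saxl, Vallejo and Zisser. Your argument is essentially Saxl's permutation-character method carried one step further (inducing irreducibles of $S_k$ rather than trivials), and is more self-contained than a bare citation; Vallejo's diagrammatic approach in the cited reference reaches the same formulas via a different combinatorial bookkeeping.
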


In the following result we provide the splitting into the two parts
of a square for the constituents up to depth~2.

\begin{theorem}\label{thm:smalldepthconst}
Let $\la \in P(n)$, $n\ge 4$. 
Let
$a_1$,
$a_2$, 
$b_2$ 
and $r_1$ be as in Proposition~\ref{prop:smalldepthconstituents}.
Let
$a_{1,S}=sg(\la,(n-1,1))$,
$a_{1,A}=ag(\la,(n-1,1))$,
and similarly define the splitting of the coefficients $a_2$ and $b_2$
into $a_{2,S}$, $a_{2,A}$,
and $b_{2,S}$, $b_{2,A}$, respectively.
\\
Then
\[
a_{1,S}=a_1, a_{1,A}=0, \;
a_{2,S}=a_2, a_{2,A}=0, \;
b_{2,S}={r_1-1 \choose 2}, b_{2,A}={r_1 \choose 2},
\]
i.e., with coefficients as determined above,
\[
S^2(\cla) = [n]+a_1[n-1,1]+a_2[n-2,2] + b_{2,S}[n-2,1^2] +
\text{constituents of depth $>2$}
\]
and
\[
A^2(\cla) = b_{2,A}[n-2,1^2] +
\text{constituents of depth $>2$}.
\]
\end{theorem}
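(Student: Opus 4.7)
The plan is to use the identity $S^2(\cla)+A^2(\cla)=\cla^2$, which, combined with \Cref{prop:smalldepthconstituents}, reduces the theorem to computing the three antisymmetric multiplicities $a_{1,A}, a_{2,A}, b_{2,A}$; note the numerical identity $\binom{r_1-1}{2}+\binom{r_1}{2}=(r_1-1)^2=b_2$, so proving $b_{2,A}=\binom{r_1}{2}$ automatically forces $b_{2,S}=\binom{r_1-1}{2}$, while each $a_{i,A}=0$ will give $a_{i,S}=a_i$.

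For $a_{1,A}=0$, I would write $\cla\downarrow_{S_{n-1}}=\sum_{i=1}^{r_1}[\la^{(i)}]$, a sum of pairwise distinct irreducibles indexed by the removable corners $b_1,\ldots,b_{r_1}$ of $\la$, and iterate \Cref{lem:SA-res} to obtain
\[
A^2(\cla)\downarrow_{S_{n-1}}=\sum_i A^2([\la^{(i)}])+\sum_{i<j}[\la^{(i)}][\la^{(j)}].
\]
Neither $A^2$ of an irreducible (recalled in \Cref{sec:prelim}) nor a Kronecker product of distinct irreducibles contains the trivial character, so the right-hand side has zero trivial multiplicity; Frobenius reciprocity with $\mathrm{ind}_{S_{n-1}}^{S_n}\mathbf{1}=[n]+[n-1,1]$ and $\langle A^2(\cla),[n]\rangle=0$ then give $a_{1,A}=0$.

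Next, I would restrict one further step to $S_{n-2}$, applying \Cref{lem:SA-res} inside each $A^2([\la^{(i)}])$ and expanding $([\la^{(i)}][\la^{(j)}])\downarrow_{S_{n-2}}=[\la^{(i)}]\downarrow\cdot[\la^{(j)}]\downarrow$. The intrinsic $A^2$ pieces and the same-parent cross terms $[\la^{(i,k)}][\la^{(i,l)}]$ with $k\ne l$ still have zero trivial multiplicity, so only the different-parent cross terms matter, contributing $\sum_{i<j}\langle [\la^{(i)}]\downarrow_{S_{n-2}},[\la^{(j)}]\downarrow_{S_{n-2}}\rangle$. The key combinatorial claim is that each such inner product equals exactly $1$: any common grandchild of $\la^{(i)}$ and $\la^{(j)}$ must equal $\la\setminus\{b_i,b_j\}$, and because $b_i$ and $b_j$ lie in distinct rows each remains a removable corner after the other is deleted, so this common grandchild does exist and is unique. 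Summing yields $\langle A^2(\cla)\downarrow_{S_{n-2}},\mathbf{1}\rangle=\binom{r_1}{2}$, and Young's rule $\mathrm{ind}_{S_{n-2}}^{S_n}\mathbf{1}=M^{(n-2,1,1)}=[n]+2[n-1,1]+[n-2,2]+[n-2,1^2]$ combined with $a_{1,A}=0$ produces $a_{2,A}+b_{2,A}=\binom{r_1}{2}$.

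To split this, I would instead restrict $\cla$ to $U=S_{n-2}\times S_2$ and use the branching rule to write $\cla\downarrow_U=\phi_+\boxtimes[2]+\phi_-\boxtimes[1^2]$, where by Pieri each irreducible of $S_{n-2}$ occurs in $\phi_+$ and in $\phi_-$ with multiplicity at most $1$. \Cref{lem:SA-prodgroups} gives $A^2(\phi_\pm\boxtimes\varepsilon)=A^2(\phi_\pm)\boxtimes\varepsilon^2=A^2(\phi_\pm)\boxtimes[2]$, while the cross term $(\phi_+\boxtimes[2])(\phi_-\boxtimes[1^2])=\phi_+\phi_-\boxtimes[1^2]$ contributes nothing to the $\boxtimes[2]$-component. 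The multiplicity-one hypothesis on the $\phi_\pm$, combined with the same reasoning used for $a_{1,A}$, then forces $\langle A^2(\phi_\pm),[n-2]\rangle=0$, so $\langle A^2(\cla)\downarrow_U,\mathbf{1}_U\rangle=0$; Frobenius with $\mathrm{ind}_U^{S_n}\mathbf{1}=M^{(n-2,2)}=[n]+[n-1,1]+[n-2,2]$ now yields $a_{2,A}=0$, and hence $b_{2,A}=\binom{r_1}{2}$. The technical heart of the argument is the grandchild-counting identity of the third paragraph; once that is in hand, everything else reduces to repeated application of Lemmas~\ref{lem:SA-res} and \ref{lem:SA-prodgroups} together with \Cref{prop:smalldepthconstituents}.
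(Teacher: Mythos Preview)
Your proof is correct. The treatment of $a_{1,A}$ and $a_{2,A}$ is essentially identical to the paper's (restriction to $S_{n-1}$ and to $S_{n-2}\times S_2$ respectively, via \Cref{lem:SA-res,lem:SA-prodgroups}). For $b_{2,A}$ you take a slightly different path: you restrict all the way down to $S_{n-2}$ and count common grandchildren of the pairs $\la^{(i)},\la^{(j)}$ to obtain $a_{2,A}+b_{2,A}=\binom{r_1}{2}$ directly. The paper instead stays at the $S_{n-2}\times S_2$ level and observes that the multiplicity of $[n-2]\times[1^2]$ in $X^2(\cla)\downarrow$ is independent of $X\in\{S,A\}$ (it lives only in the cross term, which is common to both parts), yielding $a_1+b_{2,S}=b_{2,A}$; combined with $b_{2,S}+b_{2,A}=b_2=(r_1-1)^2$ this gives the values by solving a $2\times 2$ linear system. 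Both arguments are short; yours has the pleasant feature of producing $\binom{r_1}{2}$ combinatorially rather than through an algebraic identity, while the paper's avoids the second restriction step by reusing the $S_{n-2}\times S_2$ decomposition already in hand.
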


\begin{proof}
Let $n \ge 3$, and let $\la \in P(n)$.
We have already seen earlier that the constituent $[n]$
only appears once in $\cla^2$, and it is located
in the symmetric part.

We now show that $[n-1,1]$ never appears in $A^2(\cla)$.
Assume it does occur; then $[n-1]$ appears in the restriction
\[
A^2(\cla){\downarrow}_{S_{n-1}} =
\sum_{B} A^2([\la_B]) + \sum_{B\ne C} [\la_B]  [\la_C],
\]
where $B$ and $C$ run over all removable boxes of $\la$,
and $\la_D$ denotes the partition of $n-1$ obtained by
removing a (corner) box $D$ from $\la$.
But we already know that no summand $A^2([\la_B])$ contains
a constituent $[n-1]$.
Furthermore, since in the second sum $\la_B\ne \la_C$,
no product $[\la_B]  [\la_C]$ contains $[n-1]$.   
Hence $a_{1,A}=0$ and $a_{1,S}=a_1$.

Now let $n \ge 4$.  
We want to consider $[n-2,2]$ and  $[n-2,1^2]$ in $X^2(\cla)$ for $X=S$ or $A$.
This time we consider the restriction
of $X^2(\cla)$ to $S_{n-2}\times S_2$,
and we apply Lemma~\ref{lem:SA-prodgroups};
notice that here the square of the linear character of $S_2$
is always trivial.
Then  for $X \in \{S,A\}$  we obtain
\[
 X^2(\cla){\downarrow}_{S_{n-2}\times S_2}
 = 
\dstyle
\sum_{
\begin{subarray}c
\mu \in P(n-2)\\ \mu \subseteq \la
\end{subarray}}
2^{cc(\la/\mu)-1} X^2([\mu]) \times [2]
+\dstyle \sum_
{
\begin{subarray}c
\phantom{\ne}(\mu^1,\nu^1) \phantom{\ne}
\\
\ne (\mu^2,\nu^2)\phantom{\ne}
\end{subarray}
}
c_{\mu^1,\nu^1}^\la c_{\mu^2,\nu^2}^\la
[\mu^1][\mu^2] \times [\nu^1][\nu^2],
 \]
where $cc(\la/\mu)$ is the number of connected components of the diagram $\la/\mu$, $(\mu^j,\nu^j)\in P(n-2)\times P(2)$ and
$c_{\mu^j,\nu^j}^\la$ are the Littlewood-Richardson coefficients.

We want to show that $[n-2,2]$ never appears in $A^2(\cla)$.
We notice that  $[n-2,2]$ has a constituent $[n-2]\times [2]$ in its restriction
to $S_{n-2}\times S_2$.
This cannot occur in the first summand of $A^2([\la])$,  because  $A^2([\mu])$ does not
contain $[n-2]$.
In the second sum it could only occur for $\mu^1=\mu^2$, to have $[n-2]$ in the  first component; but then we must have $\nu^1 \ne \nu^2$, so that
then $[\nu^1][\nu^2]=[1^2]$, and thus we do not get
$[n-2]\times [2]$.
Hence $a_{2,A}=0$ and $a_{2,S}=a_2$.

Still assuming  $n\ge 4$,
we finally consider the distribution of the
constituents $[n-2,1^2]$.
As above, we consider the restriction of both $S^2([\la])$ and $A^2([\la])$ to $S_{n-2}\times S_2$ and we notice that the second term is the same in both cases.  
Now, the term $[n-2]\times [1^2]$ in the restriction
$X^2([\la])$ can only appear in this second summand (the first summand consists only of characters of the form $[\nu]\times [2]$ for some $\nu \in P_2(n)$); moreover, it can only come from
the constituents $[n-1,1]$ and $[n-2,1^2]$, where it appears once in the restriction.  
Hence we deduce that $a_1+b_{2,S}=b_{2,A}$, i.e., $b_{2,A}-b_{2,S}=r_1-1$.
On the other hand, by Proposition~\ref{prop:smalldepthconstituents}
we have $b_{2,A}+b_{2,S}=b_2=(r_1-1)^2$, hence $b_{2,A}=\frac 12 r_1(r_1-1)$ and $b_{2,S}=\frac 12 (r_1-2)(r_1-1)$, as claimed.
\end{proof}

\begin{prop}\label{rectangleresult}
For $a,b\geq 3$ we have that 
\begin{align*}
S^2([a^b]) &= [ab] + [ab-2,2] 
+ [ab-3,3] 
+\dots 
\qquad\ 
A^2([a^b])  = 
  [ab-3,1^3] 
+\dots
\end{align*}
where the $\dots$ are terms of depth 4 or higher.  
\end{prop}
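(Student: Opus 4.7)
The plan is to pin down the depth $\leq 3$ part of $S^2([a^b])$ and $A^2([a^b])$ by combining Theorem \ref{thm:smalldepthconst} with a restriction to $S_{n-3}\times S_3$, where $n=ab$. First I would compute the hook invariants for $\la=(a^b)$: since $\la$ has a single removable box, $r_1=1$; direct inspection yields $r_2=2$, $r_3=3$, and the unique non-linear $3$-hook is the L-shape at the inner corner, so $r_{21}=1$. Proposition \ref{prop:smalldepthconstituents} gives $a_1=0$, $a_2=1$, $b_2=0$, $a_3=1$, $b_3=1$, $c_3=0$, and Theorem \ref{thm:smalldepthconst} then produces the depth $\leq 2$ splitting: $[n]+[n-2,2]$ in $S^2([\la])$, nothing of depth $\leq 2$ in $A^2([\la])$. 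Since $c_3=0$, the constituent $[n-3,2,1]$ appears in neither square; the work remaining is to split the multiplicities $a_3=b_3=1$ of $[n-3,3]$ and $[n-3,1^3]$ between the two parts.

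Because $\la$ is rectangular, there are precisely three subpartitions $\mu\subseteq\la$ with $|\mu|=n-3$:
\[
\mu^{(1)}=(a^{b-1},a-3),\qquad \mu^{(2)}=(a^{b-2},a-1,a-2),\qquad \mu^{(3)}=(a^{b-3},(a-1)^3),
\]
and a direct Littlewood--Richardson computation gives
\[
[\la]{\downarrow}_{S_{n-3}\times S_3} \;=\; [\mu^{(1)}]\times [3] + [\mu^{(2)}]\times [2,1] + [\mu^{(3)}]\times [1^3] \;=:\; \chi_1+\chi_2+\chi_3.
\]
Iterating Lemma \ref{lem:SA-res} yields $X^2(\chi_1+\chi_2+\chi_3)=\sum_i X^2(\chi_i)+\sum_{i<j}\chi_i\chi_j$ for $X\in\{S,A\}$. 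Lemma \ref{lem:SA-prodgroups} handles $X^2(\chi_1)$ and $X^2(\chi_3)$ since the $S_3$-factors are linear; for $\chi_2$ I would invoke the identities $S^2([2,1])=[3]+[2,1]$, $A^2([2,1])=[1^3]$, together with the product splitting $S^2(\phi\otimes\psi)=S^2(\phi)\otimes S^2(\psi)+A^2(\phi)\otimes A^2(\psi)$ (and its $A^2$ analogue with the $S$/$A$ labels swapped on one side).

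Reading off the coefficient of $[n-3]\times[3]$ in $X^2([\la]){\downarrow}$ then collapses to $\sum_i\gen{X^2([\mu^{(i)}]),[n-3]}$, which is $3$ for $X=S$ (the trivial constituent occurs exactly once in each $S^2([\mu^{(i)}])$) and $0$ for $X=A$; the cross terms $\chi_i\chi_j$ all contribute $0$, as their $S_3$-factor is $[2,1]$ or $[1^3]$. The coefficient of $[n-3]\times[1^3]$ collapses to a single off-diagonal term coming from $\chi_2$, namely $\gen{A^2([\mu^{(2)}]),[n-3]}=0$ for $X=S$ and $\gen{S^2([\mu^{(2)}]),[n-3]}=1$ for $X=A$; the cross term $\chi_1\chi_3=[\mu^{(1)}][\mu^{(3)}]\times[1^3]$ vanishes because $\mu^{(1)}\ne\mu^{(3)}$ when $a,b\geq 3$. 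Finally, Frobenius reciprocity with Pieri's rule identifies these two branching coefficients with $\sum_\tau c^\tau_{(n-3),(3)}m^X_\tau$ and $\sum_\tau c^\tau_{(n-3),(1^3)}m^X_\tau$, whose supports are $\{(n),(n-1,1),(n-2,2),(n-3,3)\}$ and $\{(n-2,1^2),(n-3,1^3)\}$ respectively. Plugging in the known depth $\leq 2$ multiplicities from the first paragraph forces $m^S_{(n-3,3)}=1$, $m^A_{(n-3,3)}=0$, $m^S_{(n-3,1^3)}=0$, $m^A_{(n-3,1^3)}=1$, completing the claim. The only real obstacle is bookkeeping: propagating the two-dimensional factor $[2,1]$ cleanly through the splitting formulas and confirming there are no further subpartitions $\mu$ of $n-3$ inside $\la$, both of which are dramatically simplified by the rectangular shape.
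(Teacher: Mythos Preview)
Your proof is correct and follows the same overall strategy as the paper: both arguments first use Proposition~\ref{prop:smalldepthconstituents} and Theorem~\ref{thm:smalldepthconst} to settle depth $\le 2$ and the vanishing of $[n-3,2,1]$, then restrict to $S_{n-3}\times S_3$ and isolate the coefficients of $[n-3]\times[3]$ and $[n-3]\times[1^3]$ to pin down $[n-3,3]$ and $[n-3,1^3]$.

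The one substantive difference is in how the summand $\chi_2=[\mu^{(2)}]\times[2,1]$ is handled. The paper does not have the general identity $S^2(\phi\otimes\psi)=S^2(\phi)\otimes S^2(\psi)+A^2(\phi)\otimes A^2(\psi)$ available (its Lemma~\ref{lem:SA-prodgroups} requires $\psi$ linear), so it instead expands the inner product $\gen{X^2([\mu^{(2)}]\times[2,1]),[n-3]\times[\alpha]}$ as an explicit average over $S_{n-3}\times S_3$, breaks the $S_3$-sum by conjugacy class, and evaluates each piece via the character formula for $X^2$ together with $\gen{S^2([\mu^{(2)}]),[n-3]}=1$, $\gen{A^2([\mu^{(2)}]),[n-3]}=0$. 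Your use of the tensor-square splitting identity together with $S^2([2,1])=[3]+[2,1]$, $A^2([2,1])=[1^3]$ reaches the same values $0$ and $1$ immediately and avoids the case-by-case character calculation; the paper's route is more elementary in that it stays entirely within the lemmas already stated, at the cost of a longer computation. Either way the final branching bookkeeping is identical.
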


\begin{proof}
 The terms of depth 2 and the term $(ab-3,2,1)$ can   be obtained from \Cref{prop:smalldepthconstituents,thm:smalldepthconst}. 
We now check the remaining constituents.  
For $X\in \{S,A\} $ we   consider the   $[ab-3] \times [3]$ and 
$[ab-3] \times [1^3]$ isotypic summands of    the following
\begin{equation}\label{forlaters}
X^2(\la){{\downarrow}}_{S_{n-k}\times S_k} = 
\sum_{\mu \in P_{(a^b)}(k)} X^2([(a^b)-\mu] \times   [\mu] )
+ \sum_{
\begin{subarray}c
\mu,\nu \in P_{(a^b)}(k) \\
\mu\neq \nu
\end{subarray}} 
 [(a^b)-\mu] [(a^b)-\nu]    \times [\mu] [\nu].
\end{equation}
The multiplicity of $[ab-3] \times [\alpha]$ for $\alpha\in \{(3),(1^3)\}$ is zero in the second summand (because $\mu\neq \nu$ implies that $ [(a^b)-\mu] [(a^b)-\nu] $ does not contain the trivial representation).  
 We now consider the first summand.
Let  $\alpha\in\{(3),(1^3)\}$, we can apply \cref{lem:SA-prodgroups} to obtain
\begin{align}\nonumber
\sum_{\mu\in\{(3),(1^3)\}}    \!\!\!\!
\langle X^2([(a^b)-\mu] \times   [\mu] )\mid [ab-3] \times [\alpha] \rangle
&= 
\sum_{\mu\in\{(3),(3^3)\}} \!\!\!\!
\langle X^2([(a^b)-\mu] ) \times [\mu]^2 \mid [ab-3] \times [\alpha] \rangle 
\\
\label{yasqueen}
&=
\begin{cases}
2 &\text{if $X=S$ and $\alpha=(3)$} \\
0 &\text{otherwise} 
\end{cases}\end{align}
 Thus it only remains to consider the contribution of $\mu=(2,1)$ to the $ [ab-3] \times [\alpha]$ isotypic summand. 
 We consider the $\alpha=(1^3)$ case as the $\alpha=(3)$ case can be argued in an identical fashion.  We have that 
 \begin{align}\begin{split}\label{splitme}
&\langle   X^2([(a^b)-(2,1)] \times   [2,1] )\mid [ab-3] \times [1^3] \rangle
\\   &\quad=  \frac{1}{6(n-3)!}  \sum_{(g,h)\in S_{ab-3}\times S_3} 	
  \!\!\!\!\!\!\!
     X^2([(a^b)-(2,1)] \times   [2,1] )(g,h) \times (-1)^{ \ell(h)}.
\end{split}
  \end{align}
We breakdown  the righthand-side of \cref{splitme} according to   the elements $h\in S_3$.  
We note that $(g,1)^2=(g,1)$ and $[2,1](1)=2$ and  therefore
\begin{align}\label{needt1}
 X^2([(a^b)-(2,1)] \times   [2,1] )(g,1)		&= X^2 (2[(a^b)-(2,1)](g)
\intertext{similarly,  $(g,(1,2))^2=(g,1)$ and $[2,1](1)=2$ and and $[2,1](1,2)=0$  and therefore}
\label{needt2}
 X^2([(a^b)-(2,1)] \times   [2,1] )(g,(1,2))		&= 
 \begin{cases}-	 [(a^b)-(2,1)](g^2) &\text{for }X=A \\
 	 [(a^b)-(2,1)](g^2) &\text{for }X=S 
 \end{cases}
\intertext{and finally,  $(g,(1,2,3))^2=(g,(1,3,2))$ and $[2,1](1,2,3)=-1=[2,1](1, 3,2)$ and   therefore}
\label{needt3}
 X^2([(a^b)-(2,1)] \times   [2,1] )(g,(1,2,3))		&= X^2( - [(a^b)-(2,1)])(g)
\end{align}
 We  can now 
decompose the righthand-side of \cref{splitme} for $X=A$ according to the conjugacy classes of $S_{3}$ (of size 1, 3, and 2 respectively) and substitute in \cref{needt1,needt2,needt3} and hence obtain 
$$
 \frac{1}{6(n-3)!}  \Bigg(\sum_{g\in S_{n-3}}    A^2( 2	[(a^b)-(2,1)] )(g) 	+  3	\sum_{g\in S_{n-3}}    	[(a^b)-(2,1)] (g^2)  		+ 2 	\sum_{g\in S_{n-3}}   S^2[(a^b)-(2,1)] (g) \Bigg)$$
 where the final term comes from the substitution $A^2 (- [\la])= S^2([\la])$.  
 We now consider the three terms in the above sum.
 We have that 
 \begin{align*}\textstyle
 \tfrac{1}{6(n-3)!}  \sum_{g\in S_{n-3}}    A^2( 2	[(a^b)-(2,1)] )(g) 
& = \tfrac{1}{6}\langle A^2( 2	[(a^b)-(2,1)] ) \mid [ab-3]\rangle 
\\
&=\tfrac{1}{6}( 2\langle    A^2[(a^b)-(2,1)]    \mid [ab-3]\rangle + 
\langle   
 [(a^b)-(2,1)] ^{2} \mid [ab-3]\rangle)
 \\
&=\tfrac{1}{6}
\intertext{where the final equality follows from \cref{prop:smalldepthconstituents}. Similarly,  we have that }
\textstyle
 \tfrac{ 3}{6(n-3)!}   	\sum_{g\in S_{n-3}}    	[(a^b)-(2,1)] (g^2)  
 &=
  \tfrac{ 1}{2 }    \langle 	[(a^b)-(2,1)] (g^2)   \mid [ab-3]\rangle 
  \\
& =
  \tfrac{ 1}{2 }    \langle 	S^2[(a^b)-(2,1)] -A^2[(a^b)-(2,1)]    \mid [ab-3]\rangle 
 \\
 &=   \tfrac{ 1}{2 } 
 \intertext{where the second  equality follows from \cref{ppppp2} and the third from \cref{prop:smalldepthconstituents}.
Finally, we have that }\textstyle
  \tfrac{ 2}{6(n-3)!}  	\sum_{g\in S_{n-3}}   S^2[(a^b)-(2,1)]  ^{(2)}
  &=
   \tfrac{ 1}{3 }    \langle 	S^2[(a^b)-(2,1)]     \mid [ab-3]\rangle \\
  &=
   \tfrac{ 1}{3 }       
\end{align*}
where again the second equality follows   from \cref{prop:smalldepthconstituents}.
Summing over these terms we obtain 
\begin{align}\label{noqueen1}
\langle   A^2([(a^b)-(2,1)] \times   [2,1] )\mid [ab-3] \times [1^3] \rangle=1.   
\intertext{In a similar fashion, one can show that }\label{noqueen2}
 \langle   S^2([(a^b)-(2,1)] \times   [2,1] )\mid [ab-3] \times [3] \rangle=1.
 \end{align}
 Thus, putting all of  \cref{yasqueen,noqueen1,noqueen2} into \cref{forlaters} we obtain 
 $$
 \langle 
 X^2(\la){{\downarrow}}_{S_{n-k}\times S_k} 
 \mid 
  [ab-3]\times [\alpha]\rangle = 
 \begin{cases}
 1 		&\text{if $X=A$ and $\alpha=(1^3)$}\\
 3 		&\text{if $X=S$ and $\alpha=(3)$}
 \end{cases}.
 $$
 Finally, the irreducible $S_n$-characters for which $[ab-3]\times [1^3]$ appears as a constituent in  the restriction are $[ab-3,1^3]$  and $[ab-2,1^2]$, the latter appears with coefficient 0 in $A^2([a^b] )$ and so the former must appear with coefficient 1, as required.  
The  irreducible $S_n$-characters for which $[ab-3]\times [3]$ appears as a constituent in  the restriction are $[ab-3,3]$,    $[ab-2, 2]$,  $[ab-1, 1]$ and  $[ab]$; the final three of which appear with coefficients  $1, 0, 1$ respectively  in $S^2([a^b] )$ and so the first must appear with coefficient $3-2=1$, as required.   
\end{proof}

\subsection{Squares of small depth characters}\label{somesmallprods}

We now turn to the squares of characters to
partitions of depth at most~2.
First we recall a result due to Malle and Magaard
on the squares of such characters
obtained in the context of determining
the situations when the
symmetric or alternating parts are irreducible \cite[Lemma 2]{MM}.

Below, for numbers $n,m\in \N$ the expression $\delta_{n \ge m}$
is defined to be~0 if $n<m$ and~1 if $n\ge m$.
The detailed description below makes
the monotonous behaviour of the
coefficients explicitly visible.
Note that the case of the character $[n-1,1]$
also follows from Theorem~\ref{thm:smalldepthconst},
and both $[n-1,1]$ and $[n-2,1^2]$ are also covered
as special cases of the more recent Theorem~\ref{thm:MW} \cite{MW}.

\begin{prop}\cite{MM}\label{prop:smalldepth}
Let $n\in \N$.
Then we have the following decompositions.
 For all $n\ge 3$:
\begin{align*}
S^2([n-1,1]) &=  [n]+[n-1,1]+ \delta_{n\ge 4} [n-2,2],
\qquad \quad
A^2([n-1,1])  =  [n-2,1^2].
\end{align*}
 For all $n\ge 4$:
\begin{align*}S^2([n-2,2]) &=  [n]+\delta_{n\ge 5}[n-1,1]
+ (1+\delta_{n \ge 6})[n-2,2]+\delta_{n\ge 7}[n-3,3]\\ &  \qquad
+\delta_{n\ge 5}[n-3,2,1]+\delta_{n\ge 8}[n-4,4]+\delta_{n\ge 6}[n-4,2^2],
\\ 
A^2([n-2,2]) &=  \delta_{n\ge 5}[n-2,1^2]
+\delta_{n\ge 6}[n-3,2,1]+[n-3,1^3]   +\delta_{n\ge 7}[n-4,3,1].
\end{align*} For all $n\ge 5$:
\begin{align*}
S^2([n-2,1^2]) &=  [n]+[n-1,1]+2[n-2,2]+
  +\delta_{n\ge 6}[n-3,3]+[n-3,2,1] +
  \delta_{n\ge 6}[n-4,2^2]+[n-4,1^4],
\\ 
A^2([n-2,1^2 ]) &=  [n-2,1^2]+[n-3,2,1]+[n-3,1^3]+\delta_{n\ge 6}[n-4,2,1^2].
\end{align*}\end{prop}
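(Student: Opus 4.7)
The plan is to deduce this proposition by combining three ingredients already available in the excerpt: the classical decompositions of the full Kronecker squares $\chi^2$ for $\chi \in \{[n-1,1],[n-2,2],[n-2,1^2]\}$, Theorem~\ref{thm:smalldepthconst} for the depth $\le 2$ part, and the branching-plus-restriction formula of Lemma~\ref{lem:SA-res} to pin down the depth $3$ and depth $4$ constituents by induction on $n$.

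The case $\chi=[n-1,1]$ is immediate. For $\la=(n-1,1)$ we have $r_1=2$, $r_2=1$, $r_3=r_{21}=0$, so Proposition~\ref{prop:smalldepthconstituents} gives $\chi^2=[n]+[n-1,1]+[n-2,2]+[n-2,1^2]$ for $n\ge 4$ (and the trivial verification for $n=3$). Theorem~\ref{thm:smalldepthconst} then yields $a_{1,S}=1$, $a_{2,S}=1$, $b_{2,S}=0$, $b_{2,A}=1$, which combined with $S^2+A^2=\chi^2$ and the fact that $[n]$ is always a constituent of $S^2$ (but not of $A^2$) gives both decompositions.

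For $\chi=[n-2,2]$ and $\chi=[n-2,1^2]$, I would first record $\chi^2$ from Remmel/Rosas (or, in manifestly positive form, verify it from Proposition~\ref{prop:smalldepthconstituents} augmented with a short case analysis of depth-$3$ and depth-$4$ constituents). Theorem~\ref{thm:smalldepthconst} then determines all depth $\le 2$ constituents of $S^2(\chi)$ and $A^2(\chi)$. For the deeper constituents I would argue by induction on $n$ using Lemma~\ref{lem:SA-res} applied to the restriction to $S_{n-1}$: the branching rule gives $[n-2,2]{\downarrow}_{S_{n-1}}=[n-3,2]+[n-2,1]$ (for $n\ge 5$) and $[n-2,1^2]{\downarrow}_{S_{n-1}}=[n-3,1^2]+[n-2,1]$ (for $n\ge 5$), hence
\[
X^2([n-2,2]){\downarrow}_{S_{n-1}} = X^2([n-3,2])+X^2([n-2,1])+[n-3,2][n-2,1],
\]
and an analogous identity for $[n-2,1^2]$. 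The inductive hypothesis supplies the first two summands, and the mixed term $[n-3,2][n-2,1]$ is a classical Kronecker product (itself a small-depth instance computed via Proposition~\ref{prop:smalldepthconstituents} and the formula for $[n-1,1]\cdot[\mu]$). Comparing this restriction with the restriction of the claimed decomposition then determines the multiplicity of every depth-$3$ and depth-$4$ constituent, because each such constituent restricts to $S_{n-1}$ with a depth $\le 3$ piece whose multiplicity is uniquely identifiable.

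The main obstacle will be the bookkeeping: one must verify that on both sides of the restriction identity the multiplicities of the depth-$3$ and depth-$4$ constituents agree, and handle the step-function thresholds $\delta_{n\ge m}$ cleanly, which means that the base cases $n\in\{4,5,6,7\}$ (depending on the character) must be checked directly from the character table of $S_n$ before the induction kicks in. Once these small cases are tabulated and the Lemma~\ref{lem:SA-res} identity is unpacked for each target constituent, the asserted decompositions follow.
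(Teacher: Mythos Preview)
Your approach is correct. The paper itself does not give a proof of this proposition: it is quoted from \cite{MM}, and the paper merely remarks that the case $[n-1,1]$ also follows from Theorem~\ref{thm:smalldepthconst} and that the cases $[n-1,1]$ and $[n-2,1^2]$ are covered by the hook result Theorem~\ref{thm:MW}. Your plan---compute the full square $\cla^2$, settle the depth $\le 2$ constituents via Theorem~\ref{thm:smalldepthconst}, and then determine the remaining depth-$3$ and depth-$4$ constituents by induction on $n$ using the restriction identity of Lemma~\ref{lem:SA-res}---is exactly the strategy the paper carries out in detail for the next case, Theorem~\ref{thm:depth3} on $[n-3,3]$. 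So your proposal is not just sound, it is the method the authors themselves adopt when they do supply a proof; the bookkeeping you flag (handling the thresholds $\delta_{n\ge m}$ via direct computation of small $n$) is also precisely how the paper handles the base of the induction there.
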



We now go one step further, to depth~3.
In \cite{MM}, the proof of Proposition~\ref{prop:smalldepthconstituents}
in the case of $[n-1,1]$ is given in detail,
while the proofs in the case of $[n-2,2]$ and $[n-2,1^2]$ are only hinted at;
as the proof of the result in case of $[n-3,3]$ is quite involved
we will give a detailed proof of this decomposition here.
The case of $[n-3,1^3]$ is
again covered by the recent
result on hooks \cite{MW}.
For $[n-3,2,1]$, the data suggest a formula
that is much more involved.

\begin{theorem}\label{thm:depth3}
Let $n\in \N$, $n\ge 6$.
Then we have the following decompositions.
\begin{align*}
S^2([n-3,3]) =
&[n]+\delta_{n\ge 7}[n-1,1]+ (1+\delta_{n\ge 8})[n-2,2]
+(\delta_{n\ge 7}+\delta_{n\ge 9})[n-3,3]
\\[4pt]
 &
+\delta_{n\ge 7}[n-3,2,1]
+(\delta_{n\ge 8}+\delta_{n\ge 10})[n-4,4]
+\delta_{n\ge 8}[n-4,3,1]
\\[4pt]
& 
+(1+\delta_{n\ge 8})[n-4,2^2]
+\delta_{n\ge 11}[n-5,5]
+\delta_{n\ge 9}[n-5,4,1]
\\[4pt]
&+\delta_{n\ge 9}[n-5,3,2]
+\delta_{n\ge 7}[n-5,2^2,1]
+\delta_{n\ge 12}[n-6,6]
+\delta_{n\ge 10}[n-6,4,2]
\\[7pt]
A^2([n-3,3]) = &
\delta_{n\ge 7}[n-2,1^2]+\delta_{n\ge 8}[n-3,2,1]+[n-3,1^3]
+(\delta_{n\ge 7}+\delta_{n\ge 9})[n-4,3,1]
\\[4pt]
&
+\delta_{n\ge 7}[n-4,2,1^2]+\delta_{n\ge 10}[n-5,4,1]
+\delta_{n\ge 8}[n-5,3,2]
\\[4pt]
&
+\delta_{n\ge 8}[n-5,3,1^2]
+\delta_{n\ge 11}[n-6,5,1]+\delta_{n\ge 9}[n-6,3^2]
\end{align*}
%

\end{theorem}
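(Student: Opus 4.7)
\textbf{My plan} is to argue by induction on $n$ via restriction to $S_{n-1}$, combining the inductive hypothesis with the already-known decompositions of depth-$\le 2$ squares and classical formulas for two-row Kronecker products.

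\textbf{Inductive step.} For $n\ge 7$, the branching rule gives
\[
[n-3,3]\downarrow_{S_{n-1}} = [n-4,3] + [n-3,2],
\]
and so by \Cref{lem:SA-res}, for $X\in\{S,A\}$,
\[
X^2([n-3,3])\downarrow_{S_{n-1}} = X^2([n-4,3]) + X^2([n-3,2]) + [n-4,3]\cdot[n-3,2].
\]
The first summand is supplied by the inductive hypothesis (applied at $n-1$), the second by \Cref{prop:smalldepth}, and the third by the classical two-row Kronecker product formula (cf.\ \cref{sec:hooks}). This produces an explicit expression for $X^2([n-3,3])\downarrow_{S_{n-1}}$, separately for $X=S$ and $X=A$.

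\textbf{Reconstruction.} One then recovers $X^2([n-3,3])$ itself from its restriction by combining two constraints: the depth-$\le 2$ constituents are fixed independently by \Cref{thm:smalldepthconst}, and the total sum $S^2([n-3,3])+A^2([n-3,3])=[n-3,3]^2$ is again given by the two-row formula. Since $[n-3,3]^2$ is supported on partitions of depth $\le 6$, only finitely many candidate constituents can occur; for each such $[\mu]$ the two constraints (restriction and total), together with non-negativity of $sg(\la,\mu)$ and $ag(\la,\mu)$, pin the split down uniquely. In practice this is implemented by proceeding in decreasing order of $\mu_1$ and peeling off contributions to the restriction one candidate at a time, using depth-$\le 2$ multiplicities as the seed of the recursion.

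\textbf{Base cases and main obstacle.} Because the formula carries cutoffs $\delta_{n\ge k}$ for $k$ as large as $12$, the finite list $n=6,7,\ldots,12$ must be verified directly as the base of the induction, most conveniently via computer algebra. The principal difficulty lies in the combinatorial bookkeeping: the cutoffs in the inductive hypothesis on $X^2([n-4,3])$ all shift by one, those in $X^2([n-3,2])$ are governed by \Cref{prop:smalldepth}, and those arising from the two-row Kronecker product $[n-4,3]\cdot[n-3,2]$ must combine cleanly to reproduce the stated thresholds for every depth. A secondary subtlety is that the restriction map is not injective on arbitrary virtual characters, which is precisely why the independent depth-$\le 2$ lock-in from \Cref{thm:smalldepthconst} is needed to initialise the reconstruction above.
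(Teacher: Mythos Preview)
Your approach is essentially the same as the paper's: induction on $n$ with base cases $n\le 12$ checked by computer, the restriction identity from \Cref{lem:SA-res} giving $X^2([n-3,3])\downarrow_{S_{n-1}} = X^2([n-4,3]) + X^2([n-3,2]) + [n-4,3][n-3,2]$, the depth-$\le 2$ seed from \Cref{thm:smalldepthconst}, and the known total $[n-3,3]^2$ from the two-row formulae. The paper then carries out the peeling-off explicitly (writing out only the $S^2$ restriction) and additionally invokes \Cref{cor:samehooksums} to place $[n-3,1^3]$ in $A^2$; that extra step is convenient but not strictly needed, since the vanishing of $[n-4,1^3]$ in the restricted $S^2$ already forces both $sg(\la,(n-3,1^3))$ and $sg(\la,(n-4,2,1^2))$ to zero.
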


\begin{proof}
Set $\la=(n-3,3)$.
Since formulae for the product of 2-part partitions but also for
the multiplicity of constituents $[n-3,3]$ in arbitrary products
are available
(see \cite{BaOr, Rosas, Saxl, V}),
we know how $\cla^2$ decomposes into irreducibles.
The two expressions for the symmetric and alternating part
given in the assertion above sum to $\cla^2$,
and we have to show
that the distribution is indeed correct.
For this we will use induction; by computation (with Maple)
we know that the result holds up to $S_{12}$. 
So we can now assume that $n\ge 13$, and then all $\delta$-coefficients
appearing in the formula are~1.
 Using Lemma~\ref{lem:SA-sum} we obtain
\color{black}
\begin{align}\label{alabelCB}
S^2([n-3,3]){\downarrow}_{S_{n-1}}
= S^2([n-4,3]) + S^2([n-3,2]) + [n-4,3]  [n-3,2] .
\end{align}
By induction and Proposition~\ref{prop:smalldepthconstituents}, respectively,
we know how the first two summands decompose, and from the formula
for characters to 2-part partitions, we know the decomposition of the product
$[n-4,3]  [n-3,2]$ (e.g., from \cite{BaOr}):
 \begin{align}\label{alabelCB2}
\begin{split}[n-4,3]  [n-3,2] =  
&  [n-2,1]+[n-3,2]+[n-3,1^2]+2[n-4,3]+2[n-4,2,1]\\
&  +[n-5,4]+2[n-5,3,1]+[n-5,2^2]+[n-5,2,1^2]+[n-6,5]\\
&  +[n-6,4,1]+[n-6,3,2]
\end{split}
\end{align}
Altogether we obtain the following expression
for the restriction $S^2([n-3,3]){\downarrow}_{S_{n-1}}$:
\begin{align}\label{alabelCB3}
\begin{split} 
&2[n-1]+3[n-2,1]+[n-3,1^2]+5[n-3,2]+5[n-4,3]+4[n-5,4]\\
&+3[n-5,2^2]+4[n-4,2,1]+3[n-5,3,1]+2[n-6,5]+2[n-6,4,1]\\
&+2[n-6,3,2]+[n-6,2^2,1]+[n-7,6]+[n-7,4,2]+[n-3,1^2]+[n-5,2,1^2]
\end{split}
\end{align}
From Theorem~\ref{thm:smalldepthconst},
we already know
\[
S^2([n-3,3]) = [n] + [n-1,1] + 2[n-2,2] + \text{constituents of depth $>2$},
\]
and
\[A^2([n-3,3]) = [n-2,1^2] + \text{constituents of depth $>2$.}
\]
As mentioned, we also have the decomposition of the square $[n-3,3]^2$.
Next we want to apply Corollary~\ref{cor:samehooksums};  
it is easily checked that for even $n>6$, $[n-3,3]$ vanishes on elements
of cycle type $(\frac n2, \frac n2)$.
Hence we obtain that
$S^2([n-3,3])$ and $A^2([n-3,3])$ have the same number of constituents labelled by  hooks of
odd 
leg length; this implies that 
$[n-3,1^3]$
is (only) a constituent of $A^2([n-3,3])$.

From the first 3 terms in $S^2([n-3,3])$,
we get as contributions to the restriction $(\ref{alabelCB3})$:
\[
2[n-1]+3[n-2,1]+2[n-3,2].
\]
We now have to determine the three
constituents in $2[n-3,3]+2[n-3,2,1]$ (from $[n-3,3]^2$)
which belong to $S^2$
and contribute the other three constituents $[n-3,2]$ to $(\ref{alabelCB3})$.
Since $[n-3,1^2]$ appears only with multiplicity 1 in $(\ref{alabelCB3})$,
we deduce that $2[n-3,3]+[n-3,2,1]$ belongs to $S^2$, and
one constituent $[n-3,2,1]$ to $A^2$.
We also note that $[n-4,1^3]$ is not in the restriction $(\ref{alabelCB3})$
of $S^2$,
so $[n-4,2,1^2]$ must be in $A^2$.
Next, we need three constituents of $2[n-4,4]+3[n-4,3,1]$ in $S^2$
to get in total $5[n-4,3]$ in (\ref{alabelCB3}), and the other two will then
contribute the required $2[n-4,3]$ to the restriction of $A^2$.

By induction, in the restriction of $A^2$,
the term $[n-5,4]$ (and also  $[n-6,5]$)
only appears once, in the product, so
$A^2$ can contain at most one constituent of
$2[n-4,4]+[n-5,5]+2[n-5,4,1]$.
Also by induction, $[n-7,6]$ does not occur at all in the restriction of $A^2$, so $[n-6,6]$ only occurs in $S^2$, and then $[n-6,5,1]$ must occur (once)
in $A^2$; but then $[n-5,5]$ cannot also occur in $A^2$.
Furthermore, since $[n-7,4,2]$ does not occur in the restriction of $A^2$ by induction, the second constituent $[n-6,4,1]$ can only come from
$[n-5,4,1]$ in $A^2$, and hence $2[n-4,4]+[n-5,5]+[n-5,4,1]$ is in $S^2$.
To get a total of $5[n-4,3]$ in the restriction of $S^2$, we then have exactly one $[n-4,3,1]$ in $S^2$ (and $2[n-4,3,1]$ in $A^2$).
We note that $[n-4,2^2]$ is not in $A^2$, since this would give a surplus term $[n-4,2,1]$ in the restriction of $A^2$, so $2[n-4,2^2]$ is in $S^2$.
By induction, there is only one $[n-5,2^2]$ in the restriction of $A^2$, but
$3[n-5,2^2]$ in $S^2$. Hence $A^2$ can contain only one constituent of
$2[n-5,3,2]+[n-5,2^2,1]$.
But for the still missing $2[n-5,3,1]$ in the restriction of $A^2$,
2 constituents of $2[n-5,3,2]+[n-5,3,1^2]$ are needed in $A^2$.
Hence $[n-5,3,2]+[n-5,3,1^2]$ is in $A^2$, and $[n-5,3,2]+[n-5,2^2,1]$
in $S^2$.
Finally, from the restrictions we immediately conclude that $[n-6,4,2]$ belongs to $S^2$ and $[n-6,3^2]$ to $A^2$.

(A final check shows that the restrictions of the stated decompositions give indeed the right result.)
\end{proof}


\begin{rems}{\rm
(1)
While the patterns are not really clear from the cases of small $k$,
we will later see some general properties for the symmetric and
alternating part of $[n-k,k]^2$ (see Section~\ref{sec:2part}).

(2)
In the light of Proposition~\ref{prop:basicobs} and Remarks~\ref{rem:applicationtoproj}
it is worth pointing out again that all the splittings determined here in case of small depth partitions (and later for some other families of partitions)
immediately provide relations between the coefficients of projective characters.
Apart from the decomposition numbers, we also obtain relations
between some coefficients $\gen{[\rho_k]^2,[\mu]}$ appearing in the Saxl square.
%
}
\end{rems}


    \section{Splitting the square:
The sign constituent and its neighbour}\label{sec:sign-and-beyond}

We have already seen that for all $\la\in P(n)$,
the trivial character $[n]$ is a constituent of $S^2(\cla)$.
How about the sign character $[1^n]$?

Let $\la^t$ denote the transpose of $\la$.
It is well known that $[1^n]\cla = [\la^t]$, so
\[
\gen{\cla^2,[1^n]}= \gen{\cla,[\la^t]}=
\left\{
\begin{array}{cl}
1 & \text{if } \la=\la^t\\
0 & \text{otherwise}
\end{array}
\right.
\]
Thus, $[1^n]$ occurs as a constituent in the square $\cla^2$ only when
$\la$ is symmetric, and in this case with multiplicity~1.

The question for which symmetric
$\la$ the constituent $[1^n]$
occurs in $A^2(\cla)$ was answered
in \cite[Theorem 3.3]{GIP}; the proof
of this theorem required some
intricate results and tableaux combinatorics.
We state the answer below
and provide a new proof
involving the characters of the alternating groups
which seems more conceptual.

First, we recall some facts on the irreducible characters
of the alternating group~$A_n$.
For $\la\ne \la^t \in P(n)$,
the restriction $\cla{\downarrow}_{A_n}=\{\la\}$
is irreducible, while for $\la=\la^t$, the
restriction is a sum of two different but algebraically conjugate
irreducible characters $\{\la\}_+$ and $\{\la\}_-$.
In this way we obtain all irreducible characters of~$A_n$.

In particular, the characters $\{\la\}_+$ and $\{\la\}_-$ have the same Frobenius--Schur indicator (mentioned in Section~\ref{sec:prelim}), and when they are non-real, they are complex conjugate to each other.
In fact, for $\la=\la^t$ with principal hook lengths $h_1, \ldots,h_d$ (for $d=dl(\la)$ the Durfee length of $\la$),  
the only possibly non-integral values of $\{\la\}_{\pm}$ are
$$\textstyle\frac 12 (\epps_\la \pm \sqrt{\epps_\la \prod_{j=1}^d h_j}),$$
where $\epps_\la = (-1)^{(n-d)/2}$;
in particular,
the characters $\{\la\}_{\pm}$ are real
exactly if $\epps_\la = 1$.

\begin{theorem}\label{thm:GIP}
Let $\la\in P(n)$ be a symmetric partition; set $d=dl(\la)$.
Then $[1^n]$ is a constituent of
$S^2(\cla)$ exactly if
the characters $\{\la\}_{\pm}$ are real, i.e., if $n\equiv d \mod 4$.
Equivalently,
\[
sg(\la,(1^n)) - ag(\la,(1^n)) = (-1)^{\frac{n-d}2} .
\]
\end{theorem}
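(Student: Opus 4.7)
The plan is to compute the difference $sg(\la,(1^n))-ag(\la,(1^n))=\gen{\cla^{(2)},[1^n]}$ as a sign-twisted Frobenius--Schur sum and to interpret it via the Frobenius--Schur indicators of the $A_n$-characters $\{\la\}_+$ and $\{\la\}_-$. Unpacking \eqref{ppppp2} with the linear character $[1^n]=\sgn$, the target equals
\[
\frac{1}{n!}\sum_{g\in S_n}\sgn(g)\,\cla(g^2).
\]
Since $\cla$ is the character of a rational representation, $\nu_2(\cla)=1$, hence $\sum_{g\in S_n}\cla(g^2)=n!$. Writing $T=\sum_{g\in A_n}\cla(g^2)$, one gets $\sum_{g\in S_n\setminus A_n}\cla(g^2)=n!-T$, and therefore
\[
sg(\la,(1^n))-ag(\la,(1^n))=\frac{2T-n!}{n!}=\frac{T}{|A_n|}-1.
\]
Using $\cla{\downarrow}_{A_n}=\{\la\}_++\{\la\}_-$ and the definition of the Frobenius--Schur indicator identifies $T/|A_n|=\nu_++\nu_-$, where $\nu_\pm$ denotes the Frobenius--Schur indicator of $\{\la\}_\pm$ as an $A_n$-character. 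Thus
\[
sg(\la,(1^n))-ag(\la,(1^n))=\nu_++\nu_--1.
\]

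The next step is to pin down $\nu_++\nu_-$ using $\la=\la^t$. Since $\gen{\cla^2,[1^n]}=\gen{\cla,[\la^t]}=1$, one has $sg(\la,(1^n))+ag(\la,(1^n))=1$, so the difference lies in $\{-1,+1\}$ and $\nu_++\nu_-\in\{0,2\}$. The crucial symmetry is that conjugation by any odd permutation gives an automorphism of $A_n$ swapping $\{\la\}_+$ and $\{\la\}_-$; since Frobenius--Schur indicators are automorphism-invariant, $\nu_+=\nu_-$. Hence either the $\{\la\}_\pm$ are not real, in which case $\nu_\pm=0$ and the difference is $-1$; or they are real, in which case $\nu_\pm\in\{\pm1\}$, and the constraint $\nu_++\nu_-\in\{0,2\}$ combined with $\nu_+=\nu_-$ rules out the ``symplectic'' alternative and forces $\nu_+=\nu_-=1$, so the difference is $+1$.

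Finally, by the explicit values of $\{\la\}_\pm$ recalled just before the theorem, these characters are real precisely when $\epps_\la\prod_{j=1}^dh_j\ge 0$; since each $h_j>0$, this is equivalent to $\epps_\la=1$, i.e.\ $(n-d)/2$ even, i.e.\ $n\equiv d\pmod 4$. Combining the two cases,
\[
sg(\la,(1^n))-ag(\la,(1^n))=\epps_\la=(-1)^{(n-d)/2},
\]
which is the stated formula and gives both conclusions of the theorem. The main obstacle in this strategy is excluding the ``symplectic'' possibility $\nu_\pm=-1$ in the real case; this is exactly what the transposition-conjugation symmetry $\nu_+=\nu_-$ together with the a priori bound $sg+ag=1$ accomplishes.
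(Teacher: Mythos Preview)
Your proof is correct. It is close in spirit to the paper's argument---both reduce the question to the Frobenius--Schur indicators of $\{\la\}_\pm$ on $A_n$---but the execution differs. The paper applies Lemma~\ref{lem:SA-sum} to write
\[
X^2(\cla){\downarrow}_{A_n}=X^2(\{\la\}_+)+X^2(\{\la\}_-)+\{\la\}_+\{\la\}_-
\]
and then tracks where the trivial $A_n$-character $\{n\}$ sits: in the non-real case it appears in the cross term $\{\la\}_+\{\la\}_-$, forcing $[1^n]$ into $A^2(\cla)$; in the real case the cross term contributes nothing, and since the known constituent $[n]$ in $S^2(\cla)$ restricts to $\{n\}$, some $S^2(\{\la\}_\pm)$ must contain $\{n\}$, whence $\nu_\pm=+1$ and both do, giving $[1^n]$ in $S^2(\cla)$ as well.

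You instead compute $\gen{\cla^{(2)},[1^n]}$ directly as a sign-twisted Frobenius--Schur sum, split by parity, and obtain the clean identity $sg-ag=\nu_++\nu_--1$. The symplectic alternative $\nu_\pm=-1$ is then excluded via the a~priori bound $sg+ag=1$ together with $\nu_+=\nu_-$. Both arguments rule out the symplectic case, but by different mechanisms: the paper traces the restriction of the trivial $S_n$-character, whereas you use the total Kronecker multiplicity. Your route is a bit more computational but has the virtue of producing the explicit formula $\nu_++\nu_--1$ along the way, which makes the role of the indicator trichotomy completely transparent.
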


\begin{proof}
Since $\cla{\downarrow}_{A_n}=\{\la\}_+ +\{\la\}_-$,
  using Lemma~\ref{lem:SA-sum}
\color{black}
we have for $X\in \{S,A\}$:
\[
X^2(\cla){\downarrow}_{A_n} =
X^2(\{\la\}_+) + X^2(\{\la\}_-) + \{\la\}_+  \{\la\}_-.
\]
When the characters $\{\la\}_{\pm}$ are non-real,
we have
\[
1=\gen{\{\la\}_+,\{\la\}_+}=\gen{\{\la\}_+  \{\la\}_-,\{n\}},
\]
while in the real case $\gen{\{\la\}_+  \{\la\}_-,\{n\}}=0$.
Now the trivial character  $\{n\}$ only comes from the restriction
of $[n]$ and $[1^n]$ to $A_n$, and we already know that both
occur only once in $\cla^2$, and that
$[n]$ is a constituent of $S^2(\la)$.
Hence, when $\{\la\}_{\pm}$ are non-real,
$[1^n]$ must occur in $A^2(\cla)$.
Now assume that the characters $\{\la\}_{\pm}$ are real;
by the reasoning above,
at least one of $S^2(\{\la\}_{\pm})$ has to contain $\{n\}$,
but since $\{\la\}_{\pm}$ have the same Frobenius--Schur indicator,
then both $S^2(\{\la\}_{\pm})$ contain $\{n\}$.
Thus, in the real case, $S^2(\cla)$ has to contain both, $[n]$ and $[1^n]$.

Finally, it was already pointed out above
that the characters $\{\la\}_{\pm}$ are non-real
exactly if $\epps_\la = (-1)^{(n-d)/2} = -1$, so we are done.
\end{proof}


For the location of the sign character in one of the two parts
of the Saxl square we then have the following
consequence.

\begin{cor}
Let $k\in \N$, $k>1$, $n=k(k+1)/2$. 
Then the following holds.
\begin{enumerate}
\item[{(1)}]
If $k \not\equiv 2 \mod 4$,
then
$sg(\rho_k,(1^n))=1$, $ag(\rho_k,(1^n))=0$.
\item[{(2)}]
If $k\equiv 2 \mod 4$, then
$sg(\rho_k,(1^n))=0$, $ag(\rho_k,(1^n))=1$.
\end{enumerate}
\end{cor}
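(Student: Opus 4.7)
The plan is to derive this as a direct consequence of Theorem~\ref{thm:GIP} applied to $\la = \rho_k$. Since $\rho_k$ is self-conjugate, the general observation at the start of Section~\ref{sec:sign-and-beyond} gives $\gen{[\rho_k]^2,[1^n]} = 1$, so $sg(\rho_k,(1^n)) + ag(\rho_k,(1^n)) = 1$. Theorem~\ref{thm:GIP} supplies the complementary equation
\[
sg(\rho_k,(1^n)) - ag(\rho_k,(1^n)) = (-1)^{(n-d)/2},
\]
where $d = dl(\rho_k)$. Solving the two-by-two system will immediately identify the part containing $[1^n]$, so everything reduces to computing the parity of $(n-d)/2$.

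First I would determine $d = dl(\rho_k)$. Writing $\rho_k$ as the partition with parts $\la_i = k+1-i$ for $1\le i\le k$, the Durfee condition $\la_i \ge i$ becomes $i\le (k+1)/2$, giving $d = \lceil k/2 \rceil$. Thus $d = k/2$ for even $k$ and $d = (k+1)/2$ for odd $k$.

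Next I would run through the four residues of $k$ modulo $4$, using $n = k(k+1)/2$, and compute $n-d$ in each case. For $k = 4m$ one gets $n-d = 8m^2$; for $k=4m+1$, $n-d = 4m(2m+1)$; for $k=4m+3$, $n-d = 4(m+1)(2m+1)$; and for $k=4m+2$, $n-d = 2(2m+1)^2$. In the first three cases $(n-d)/2$ is even, so the Frobenius--Schur sign is $+1$; in the remaining case $(n-d)/2 = (2m+1)^2$ is odd, so the sign is $-1$. Combined with $sg + ag = 1$ this gives exactly the statement of (1) and (2).

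The only real obstacle is the bookkeeping in the case analysis modulo $4$; there is no genuine difficulty beyond this, since Theorem~\ref{thm:GIP} does all the conceptual work. I would present the computation of $d$ and the four cases in a compact table or inline, and then invoke Theorem~\ref{thm:GIP} to conclude.
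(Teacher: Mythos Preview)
Your proposal is correct and follows essentially the same approach as the paper: both apply Theorem~\ref{thm:GIP} to $\la=\rho_k$, compute $d=dl(\rho_k)=\lfloor (k+1)/2\rfloor$ (equivalently your $\lceil k/2\rceil$), and then determine the parity of $(n-d)/2$. The only cosmetic difference is that the paper splits into the two cases $k$ odd and $k$ even (obtaining $(n-d)/2=\tfrac14(k-1)(k+1)$ and $\tfrac14 k^2$ respectively), which is slightly more economical than your four-case split modulo~$4$.
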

\begin{proof}
The diagonal length of $\rho_k$ is $d=\lfloor \frac{k+1}2 \rfloor$.
Thus, when $k$ is odd, we have
\[
\frac{n-d}2 = \tfrac 14 (k(k+1)-(k+1)) = \tfrac 14 (k-1)(k+1),
\]
which is always even.  
When $k$ is even, we have
\[
\frac{n-d}2 = \tfrac 14 (k(k+1)-k) = \tfrac 14 k^2,
\]
which is even exactly when $k\equiv 0 \mod 4$.
\end{proof}


We want to build on Theorem~\ref{thm:GIP} and determine next the distribution of
$[2,1^{n-2}]$ in the Kronecker square $\cla^2$.

\begin{theorem}\label{thm:lowendconst}
Let $\la\in P(n)$, with~$d=dl(\la)$ and $r_1$ the number of
removable boxes.
\begin{enumerate}
\item
Assume $|\la\cap \la^t|<n-1$. Then
\[
sg(\la,(2,1^{n-2}))=0=ag(\la,(2,1^{n-2})).
\]

\item
Assume $|\la\cap \la^t|=n-1$. Then
\[
\begin{array}{rl}
sg(\la,(2,1^{n-2}))=1, ag(\la,(2,1^{n-2}))=0
&
\text{if } \frac{n-1-d}2 \text{ is even,}\\[7pt]
sg(\la,(2,1^{n-2}))=0, ag(\la,(2,1^{n-2}))=1
&
\text{if } \frac{n-1-d}2 \text{ is odd.}
\end{array}
\]

\item
Assume $|\la\cap \la^t|=n$, i.e., $\la=\la^t$.
Set $\epps_\la=(-1)^{(n-d)/2}$.
Then
\[
\begin{array}{rl}
sg(\la,(2,1^{n-2}))
= \frac{r_1-1}2 =
ag(\la,(2,1^{n-2}))
&\text{if  $r_1$ is odd}\\[7pt]
sg(\la,(2,1^{n-2}))
= \lfloor \frac{r_1-\epps_\la}2 \rfloor,
ag(\la,(2,1^{n-2}))
= \lfloor \frac{r_1+\epps_\la}2 \rfloor
&\text{if  $r_1$ is even.}
\end{array}
\]

\end{enumerate}
\end{theorem}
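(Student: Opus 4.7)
The plan is to compute $sg(\la,(2,1^{n-2})) - ag(\la,(2,1^{n-2}))$ using \Cref{thm:GIP} as a black box, and to compute $sg(\la,(2,1^{n-2})) + ag(\la,(2,1^{n-2})) = g(\la,\la,(2,1^{n-2}))$ by a direct Kronecker calculation, then to solve the resulting $2\times 2$ linear system in each of the three cases.

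\emph{Reducing to the sign character.} First I would write $[2,1^{n-2}] = \mathrm{ind}_{S_{n-1}}^{S_n}[1^{n-1}] - [1^n]$, obtained by tensoring the standard expression for $[n-1,1]$ with the sign. By Frobenius reciprocity, together with \Cref{lem:SA-sum} applied iteratively to $\cla\down_{S_{n-1}} = \sum_B [\la\setminus B]$ (sum over removable boxes $B$ of $\la$), the cross-terms cancel upon restriction to $S_{n-1}$ of $\cla^{(2)} = S^2(\cla) - A^2(\cla)$, giving
$$\gen{\cla^{(2)},[2,1^{n-2}]} = \sum_B \gen{[\la\setminus B]^{(2)},[1^{n-1}]} - \gen{\cla^{(2)},[1^n]}.$$
By \Cref{thm:GIP}, each summand on the right equals $(-1)^{(n-1-dl(\la\setminus B))/2}$ when $\la\setminus B$ is self-conjugate and is zero otherwise, while the subtracted term is $(-1)^{(n-d)/2}$ if $\la=\la^t$ and is zero otherwise.

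\emph{Identifying the contributing boxes.} Any self-conjugate sub-Young-diagram of size $n-1$ sitting inside $\la$ also lies inside $\la^t$, hence inside $\la\cap\la^t$. In case (1), $|\la\cap\la^t|<n-1$ rules out every such $B$; the same containment argument together with the identity $g(\la,\la,(2,1^{n-2})) = g(\la,\la^t,(n-1,1)) = \gen{\cla\down,[\la^t]\down} - \delta_{\la,\la^t}$ forces $sg + ag=0$. In case (2), the unique cell $B_0 \in \la\setminus \la^t$ is automatically an off-diagonal removable corner of $\la$ (because $\la\cap\la^t$ is itself a Young diagram), and $\la\setminus B_0 = \la\cap\la^t$ is self-conjugate with $dl(\la\setminus B_0)=dl(\la)=d$; moreover the only pair $(B,C)$ with $\la\setminus B = \la^t\setminus C$ is the evident one, giving $sg+ag=1$. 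In case (3) with $\la=\la^t$, the relevant $B$ are precisely the (at most one) diagonal removable corners of $\la$; each such box reduces the Durfee length by one and hence contributes $(-1)^{(n-1-(d-1))/2} = \epps_\la$, while the pair-count gives $sg + ag = r_1 - 1$.

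\emph{Solving the system.} In case (1) both sums are zero, so $sg=ag=0$. In case (2) we have $sg+ag=1$ and $sg-ag=(-1)^{(n-1-d)/2}$, yielding the stated dichotomy. In case (3), writing $r_1=2s+t$ with $t\in\{0,1\}$ the number of diagonal removable corners (off-diagonal removable corners of a symmetric partition come in conjugate pairs), we get $sg-ag=(t-1)\epps_\la$; a short parity check using $t\equiv r_1\pmod 2$ matches the floor expressions in the statement (when $r_1$ is odd the difference vanishes, and when $r_1$ is even the splitting by $\epps_\la = \pm 1$ lands exactly on the claimed floors).

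\emph{Main obstacle.} The hard part is the combinatorial bookkeeping in the middle step --- verifying that the single cell in $\la\setminus\la^t$ in case (2) is genuinely a removable corner of $\la$, and correctly tracking how $dl(\la)$ changes when removing a diagonal versus an off-diagonal corner. The conceptual move is the first step, where Frobenius reciprocity with $[2,1^{n-2}] = [1^{n-1}]\uparrow - [1^n]$ parlays the answer for the sign character (\Cref{thm:GIP}) into the answer for its neighbour; beyond this no tools other than \Cref{lem:SA-sum} are required.
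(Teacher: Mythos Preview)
Your proposal is correct and follows essentially the same route as the paper: both arguments restrict to $S_{n-1}$, invoke \Cref{thm:GIP} for the sign constituent in $[\la_B]^{(2)}$, and combine this with the known total multiplicity $g(\la,\la,(2,1^{n-2}))$. Your packaging is slightly cleaner: by working directly with $\cla^{(2)}=S^2(\cla)-A^2(\cla)$ you observe that the cross terms $[\la_B][\la_C]$ cancel upon restriction, so you never need to track them, whereas the paper analyses $S^2$ and $A^2$ separately and must argue in each case which cross products $[\la_B][\la_C]$ contain $[1^{n-1}]$ (namely those with $\la_B=\la_C^t$). The paper cites Dvir for the total multiplicity in cases (1)--(2) and \Cref{prop:smalldepthconstituents} in case (3); your formula $g(\la,\la,(2,1^{n-2}))=g(\la,\la^t,(n-1,1))=\gen{\cla\down,[\la^t]\down}-\delta_{\la,\la^t}$ reaches the same numbers by an equivalent one-line computation.
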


\begin{proof}
First of all, we note that $|\la\cap \la^t|$ is the maximal length
of a constituent in $\cla^2$ (see \cite{Dvir}).
Hence $[2,1^{n-2}]$ can only occur as a constituent in $\cla^2$
if $|\la\cap \la^t| \ge n-1$. This gives~(1).

Furthermore, when $|\la\cap \la^t|=n-1$, then we have (using \cite{Dvir}):
\[
\gen{\cla^2,[2,1^{n-2}]} = \gen{[1]  [1],[1]} =1.
\]
So for (2), we only have to determine whether $[2,1^{n-2}]$ belongs
to $S^2(\cla)$ or $A^2(\cla)$. We keep in mind that $\cla^2$ does
not contain $[1^n]$.
\\
We now consider the restriction of $\cla$ to $S_{n-1}$.
Let again $X\in\{A,S\}$. We have already seen before that
we obtain
\begin{equation}\label{(*)}
 X^2(\cla){\downarrow}_{S_{n-1}} =
\sum_C X^2([\la_C])  + \sum_{C\ne D} [\la_C]  [\la_D],
\end{equation}
where $C,D$ run over the removable boxes of $\la$.
Let $B$ be the box in $\la/(\la\cap \la^t)$.
Now, $[1^{n-1}]$ is a constituent in a product
$[\la_C]  [\la_D]$ (appearing in the sum above)
if and only if $\la_C=\la_D^t$. But if $C=B$,
this cannot hold for any $D\ne C$;
then, $C\ne D$ must be in transpose positions, but then
$\la_C \ne \la_D^t$, by comparing $B$ and its transpose.
So $[1^{n-1}]$ can occur as a constituent
in $X^2(\cla){\downarrow}_{S_{n-1}}$, if and only if
it is in $X^2([\la_B])$.
For $X=S$ ($X=A$, respectively), we know by Theorem~\ref{thm:GIP},
that this is the case
if and only if $\frac12({n-1-dl(\la_C)})$ is even (odd, respectively).
As $C$ is not on the diagonal of $\la$,
 we have that $$\tfrac12({n-1-dl(\la_C)}) = \tfrac12({n-1-d}).$$
Since $[1^n]$ does not appear in $\cla^2$,
$[1^{n-1}]$ can occur as a constituent
in $X^2(\cla){\downarrow}_{S_{n-1}}$,
if and only if $[2,1^{n-2}]$ is a constituent
of $X^2(\cla)$. Hence $[2,1^{n-2}]$ is a constituent
of $S^2(\cla)$ ($A^2(\cla)$, respectively) if and only if
$\frac12({n-1-d})$ is even (odd, respectively), and thus
(2) is proved.

Now we turn to (3), i.e., now $\la=\la^t$. In this case, we have
(see Proposition \ref{prop:smalldepthconstituents})
\[
g(\la,\la,(2,1^{n-2}))=g(\la,\la,(n-1,1))=r_1-1.
\]
As in the previous case, we consider the restriction of
$\cla$ to $S_{n-1}$ and look at \cref{(*)}.

First we consider the case that $r_1$ is odd, i.e., there is a
removable box, $B$ say,  on the diagonal of $\la$.
This time, a product $[\la_C]  [\la_D]$ (appearing in \cref{(*)} above)
can contain a constituent $[1^{n-1}]$, namely, exactly when
$C,D$ are a pair of transpose boxes in $\la$.
As the box $B$ is never involved,
we obtain exactly $\frac{r_1-1}2$ such constituents $[1^{n-1}]$.
In the first sum on the right side of \cref{(*)}, we can have at most
one constituent $[1^{n-1}]$, namely in $X^2([\la_B])$,
and we know it is in $S^2([\la_B])$ ($A^2([\la_B])$, respectively),
when $\frac12({n-1-(d-1)})=\frac12({n-d})$ is even (odd, respectively).
We thus have found $r_1$ constituents $[1^{n-1}]$
in $\cla^2{\downarrow}_{S_{n-1}}$, which can only come from
the part $[1^n]+(r_1-1)[2,1^{n-2}]$ in $\cla^2$;
we also  know from the previous arguments that
$[1^n]$ is in $X^2(\cla)$ if and only if $[1^{n-1}]$
is in $X^2([\la_B])$.
Thus we must have exactly $\frac12({r_1-1})$ constituents
$[2,1^{n-2}]$ in both $S^2(\cla)$ and $A^2(\cla)$, as claimed.

Next we consider the case that $r_1$ is even, i.e., there is no
removable box on the diagonal of $\la$.
Similar as in the previous case, we find exactly $\frac{r_1}2$ products
$[\la_C]  [\la_D]$ with a constituent $[1^{n-1}]$, to pairs of transposed  boxes in $\la$;  lying above these, we find $\frac{r_1}2$ constituents in
$X^2(\cla)$.
In this case no $\la_C$ is symmetric, so none of $X^2([\la_C])$ can contain a constituent $[1^{n-1}]$.
So the one constituent $[1^n]$ in either $S^2(\cla)$ or $A^2(\cla)$ (depending on $\epps_\la$ being 1 or $-1$, respectively), restricts to one of the
constituents $[1^{n-1}]$ in $\sum_{C\ne D} [\la_C]  [\la_D]$,
while all the other such constituents have to come from
$\frac{r_1}2-1$ constituents $[2,1^{n-2}]$ in $S^2(\cla)$ or $A^2(\cla)$,
respectively, and the other part of the Kronecker square has
$\frac{r_1}2$ constituents $[2,1^{n-2}]$.
It is easily seen that this is exactly the assertion in (3) in the case of even $r_1$.
\end{proof}

   \section{Splitting the square: 2-part partitions}\label{sec:2part}

For the Kronecker square of a 2-part partition $\la$ even the
coefficients $g(\la,\la,\mu)$ are difficult to determine
(see the earlier comments on Remmel's formula),
and we have also seen that even in case of partitions of small depth
the formulae are intricate.
So we will focus here on some particularly nice squares.
We first state the decomposition of some
multiplicity-free products of characters to 2-part partitions.
 These follow from the general formulae,
and were known already as part of the easy direction of the
classification conjecture on multiplicity-free products (see \cite{BeBo});
they have also appeared explicitly in some more recent papers.
The fact that only partitions of length at most~4 can appear
is easy to see as a consequence of \cite{Dvir}.  
 
\begin{prop}\label{prop:2partmfKronecker}
 For $k\in \N_0$ and $n=2k+1$, we have that 
\begin{equation}\tag{1}
\-
[k+1,k]^2 = \sum_{\la\in P_4(n)} [\lambda]\:.
\end{equation}
For $k\in \N$ and $n=2k$, we let  $E_4(n)$ and $O_4(n)$ denote
the sets of partitions $\la\in P_4(n)$ into only even parts and only odd parts, respectively. 
Then
\begin{equation}\tag{2}
\-
[k,k]^2 =\sum_{\lambda \in E_4(n)} [\lambda] +
\sum_{\lambda \in O_4(n)}[\lambda]\:.
\end{equation}
 Let $k\in \N$, $n=2k$.  Let  $E_2O_2(n)$ denote
the set of partitions $\la\in P_4(n)$ with two even and two odd parts
(again, 0 is considered an even part).
Then
\begin{equation}\tag{3}
\-
[k,k]  [k+1,k-1] =\sum_{\lambda \in E_2O_2(n)} [\lambda]
\:.
\end{equation}
 \end{prop}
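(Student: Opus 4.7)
The plan is to deduce all three identities from the known Kronecker formulae for products of two-part partitions, together with Dvir's length bound on constituents.

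First, by Dvir's theorem \cite{Dvir} (as recalled in the proof of \Cref{thm:lowendconst}), any constituent $\nu$ of $[\lambda][\mu]$ satisfies $\ell(\nu)\le |\lambda\cap\mu^t|$. A short direct computation gives $(k+1,k)^t=(2^k,1)$, $(k,k)^t=(2^k)$ and $(k+1,k-1)^t=(2^{k-1},1^2)$; in each of the three cases at hand $\lambda\cap\mu^t=(2,2)$, of size $4$, so every constituent $\nu$ lies in $P_4(n)$. This already restricts the support to the correct range.

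Second, for each $\nu\in P_4(n)$ I would compute the multiplicity $g(\lambda,\mu,\nu)$ using the explicit formula of \cite{BaOr} (or equivalently the corrected Rosas formula \cite{Rosas}), which expresses the coefficient as a signed count of certain Yamanouchi/lattice-word type tableaux filling a skew shape. For the specific two-row factors considered here these formulae collapse dramatically: in case (1), every $\nu\in P_4(2k+1)$ contributes exactly one valid tableau, so the coefficient is $1$; in case (2), parity constraints on the parts of $\nu$ force the signed count to vanish unless $\nu\in E_4(2k)\cup O_4(2k)$, in which case the answer is $1$; in case (3), analogous parity conditions cut the support down to $E_2O_2(2k)$.

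Third, a dimension check via the hook length formula provides a useful sanity test, e.g.\ $f((k+1,k))^2=\sum_{\nu\in P_4(2k+1)}f(\nu)$ for case (1); together with Step~2 this pins down multiplicity one everywhere on the claimed support and rules out any unexpected cancellations.

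The main obstacle is the parity bookkeeping in cases (2) and (3): one must track carefully how the BaOr/Rosas formula, itself a signed sum over tableaux, reduces to a $0/1$ outcome dictated entirely by the parities of the parts of $\nu$. Once this parity pattern is extracted, multiplicity-freeness follows, and the three identities fit naturally into the ``easy direction'' of the Bessenrodt--Bowman multiplicity-free classification \cite{BeBo}.
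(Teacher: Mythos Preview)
Your approach is essentially the paper's: the paper gives no detailed proof either, but simply records these as known consequences of the general two-part Kronecker formulae \cite{BaOr,Rosas} and the easy direction of the multiplicity-free classification \cite{BeBo}. Two minor points: the length bound $\ell(\nu)\le 4$ already follows from the elementary inequality $\ell(\nu)\le \ell(\lambda)\ell(\mu)$ (via Schur--Weyl duality), so the sharper Dvir bound is not needed; and the paper notes that (1) can be deduced from (2) by restriction to $S_{2k+1}$, which spares you a separate computation.
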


Notably the square in (2) above was also computed by Manivel \cite{Man10},
who added \emph{``that it would be interesting to understand the splitting of
$[n, n]   [n, n]$ into its symmetric and skew-symmetric parts"}.
We will answer his question and provide some
further square splittings below.

\color{black}
 We further note that (1) can easily be deduced from (2) by restriction.
\color{black}

First we show that all characters to $E_4$-partitions occur in the symmetric part.
\begin{prop}\label{part1}
For $k\in \N$ and $n=2k$, we have
\[
sg((k,k),\la)=1 \quad \text{for all } \lambda \in E_4(n) .
\]
\end{prop}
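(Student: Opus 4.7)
The plan is to proceed by induction on $k$, combined with Ressayre's semigroup property (Proposition~\ref{prop:Ressayre}).

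First, by Proposition~\ref{prop:2partmfKronecker}(2), the Kronecker square $[k,k]^2$ decomposes multiplicity-freely as $\sum_{\lambda \in E_4(n) \cup O_4(n)} [\lambda]$, so for every $\lambda \in E_4(n)$ the multiplicities satisfy $sg((k,k),\lambda) + ag((k,k),\lambda) = 1$. It therefore suffices to establish the lower bound $sg((k,k),\lambda) \geq 1$.

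For the inductive step with $k \geq 5$, let $\lambda = 2\alpha \in E_4(2k)$, so that $\alpha \vdash k$ with $\ell(\alpha) \leq 4$. I would split $\alpha = \beta + \gamma$ componentwise with $\beta \vdash k_1$ and $\gamma \vdash k_2$, each of length at most~$4$, and $k_1, k_2 \geq 1$, $k_1 + k_2 = k$. Such a nontrivial splitting always exists when $k \geq 5$: if $\alpha = (m^j)$ is a rectangle then $k \geq 5$ and $j \leq 4$ force $m \geq 2$, so one may take $\beta = (1^j)$ and $\gamma = ((m-1)^j)$; otherwise $\alpha$ has a strict descent $\alpha_i > \alpha_{i+1}$ at some position $i$, and one peels off $\beta = (1^i)$, leaving the valid partition $\gamma = (\alpha_1-1,\dots,\alpha_i-1,\alpha_{i+1},\dots,\alpha_4)$. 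Writing $\lambda = 2\beta + 2\gamma$ with $2\beta \in E_4(2k_1)$ and $2\gamma \in E_4(2k_2)$, the inductive hypothesis yields $sg((k_1,k_1), 2\beta) > 0$ and $sg((k_2,k_2), 2\gamma) > 0$. Applying Proposition~\ref{prop:Ressayre} to $(k,k) = (k_1,k_1) + (k_2,k_2)$ and $\lambda = 2\beta + 2\gamma$ then gives $sg((k,k),\lambda) > 0$, completing the induction.

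The base cases $k \leq 4$ must be verified directly. For $k = 1$ this is trivial ($S^2[1,1] = [2]$); for $k = 2, 3$ the multiplicity-free decomposition combined with the dimensions of $S^2[k,k]$ and the fact that the trivial character $[n]$ belongs to $S^2[k,k]$ pin the splitting down uniquely. The main obstacle is the case $k = 4$, $\lambda = (2,2,2,2)$, since $\alpha = (1^4)$ admits no nontrivial decomposition and so cannot be reached via semigroup descent. I would handle this exceptional case via restriction to $S_7$: since $[4,4]{\downarrow}_{S_7} = [4,3]$ is a single constituent, Lemma~\ref{lem:SA-res} gives $S^2[4,4]{\downarrow}_{S_7} = S^2[4,3]$, and then Frobenius reciprocity extracts $\langle S^2[4,4],[2^4]\rangle$ from the known splitting of $S^2[4,3]$, which is given by the companion splitting result for $[k+1,k]$-squares.
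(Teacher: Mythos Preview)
Your overall strategy coincides with the paper's: use the multiplicity-freeness of $[k,k]^2$ to reduce to showing $sg((k,k),\la)>0$, and then invoke Ressayre's semigroup property to reduce to a handful of base cases. The paper's decomposition is slightly slicker---it writes any $\la\in E_4(n)$ as a sum of double-columns $(2^j)$, $j\in\{1,2,3,4\}$, so only the four values $sg((j,j),(2^j))$ need checking---but your inductive splitting reaches the same endpoint, with the unique indecomposable case being exactly $\la=(2^4)$.

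The genuine gap is in your treatment of that base case. You compute $sg((4,4),(2^4))$ by restricting to $S_7$, which correctly gives $S^2([4,4]){\downarrow}_{S_7}=S^2([4,3])$, and then you read off $\gen{S^2([4,3]),[2^3,1]}$ from ``the companion splitting result for $[k+1,k]$-squares''. But that companion result (Theorem~\ref{thmk+1ksplit} in the paper) is obtained \emph{by restricting} the splitting of $[k+1,k+1]^2$; in particular, the decomposition of $S^2([4,3])$ is deduced from that of $S^2([4,4])$, which is precisely what you are trying to establish. So the argument is circular as written. The paper handles this base case by a direct (computer) check that $sg((4,4),(2^4))=1$; you would need to do the same, or provide an independent computation of $S^2([4,3])$ that does not go through $[4,4]$. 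Your dimension arguments for $k=2,3$ are fine, but the analogous dimension count for $S^2([4,3])$ (seven constituents summing to $105$) does not uniquely pin down the splitting without further input.
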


\begin{proof}
For $j\in \{1,2,3\}$ we already know and for $j=4$ we check by computer that
$sg((j,j),(2^j))=1$.
Now if $\la\in E_4(n)$, we can decompose $\la$ into a sum of double-columns,
i.e., $\la = \sum_{j=1}^{\la_1/2} \la^{(j)}$,
where $\la^{(j)} \in \{(2),(2^2),(2^3),(2^4)\}$, for all~$j$.
Hence by Proposition~\ref{prop:2partmfKronecker} and the semigroup property stated in Proposition~\ref{prop:Ressayre}, we obtain
$sg((k,k),\la)=1$.
\end{proof}

Next we prove with considerable more work that for all $\la \in O_4(2k)$
we have $sg((k,k),\la)=0$.   This will then give the desired splitting.

\color{black} In this section we deal with characters and modules induced from Young subgroups and wreath product subgroups,  the irreducible characters and modules, and the symmetric powers of all these modules.  
It is thus easier to use module notation (rather than introduce new character notation) throughout this section (in contrast to earlier sections).
\color{black}
  We first recall a couple of results on decomposing induced representations and 
 Kronecker products, which will be essential in what follows.  
 \begin{prop}[\cite{thrall}]
For $k\geq 1$, we have that 
$${\rm ind}_{S_k \wr S_2  }^{S_{2k}}(\mathbb{C})= \bigoplus_{\mu \in P_2(k)}S^{ 2\mu}_\mathbb{C}$$
\end{prop}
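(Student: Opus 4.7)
The plan is to translate via the Frobenius characteristic map $\mathrm{ch}$, which sends an $S_n$-character to a symmetric function of degree $n$, with $\mathrm{ch}(S^{\lambda}_{\mathbb{C}}) = s_{\lambda}$ and $\mathrm{ch}(\mathrm{triv}_{S_n}) = h_n$.  Under this correspondence, induction from a wreath product is computed by plethysm: a standard result (see Macdonald's book, Chapter~I, Appendix~A) gives
\[
\mathrm{ch}\bigl(\mathrm{ind}_{S_k \wr S_2}^{S_{2k}}(\mathbb{C})\bigr) \;=\; h_2[h_k].
\]
Thus the statement reduces to the plethystic identity
\[
h_2[h_k] \;=\; \sum_{\mu \in P_2(k)} s_{2\mu}.
\]

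To prove this identity, I would write $h_2 = \tfrac{1}{2}(p_1^2 + p_2)$ and split $h_2[h_k] = \tfrac{1}{2}\bigl(h_k^2 + p_2[h_k]\bigr)$.  For the first summand, Pieri's rule gives $h_k^2 = \sum_{j=0}^{k} s_{(k+j,\,k-j)}$, each $2$-part partition of $2k$ appearing once.  For the second summand, I would extract the Schur expansion of $p_2[h_k] = h_k(x_1^2, x_2^2, \ldots)$ from the generating function identity
\[
\sum_{k \geq 0} p_2[h_k]\, t^k \;=\; \prod_i \frac{1}{1-t x_i^2} \;=\; H(\sqrt{t})\, H(-\sqrt{t}),
\]
where $H(t) = \sum_k h_k t^k$.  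Expanding the product yields $p_2[h_k] = \sum_{a+b=2k} (-1)^b h_a h_b$, and a telescoping application of the Jacobi--Trudi identity $s_{(a,b)} = h_a h_b - h_{a+1}h_{b-1}$ collapses this to
\[
p_2[h_k] \;=\; \sum_{j=0}^{k} (-1)^{k-j}\, s_{(k+j,\,k-j)}.
\]

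Combining the two expansions and halving, the terms with $k - j$ odd cancel while those with $k-j$ even survive with coefficient $1$, giving
\[
h_2[h_k] \;=\; \sum_{\substack{0 \leq j \leq k \\ k-j \text{ even}}} s_{(k+j,\,k-j)} \;=\; \sum_{\mu \in P_2(k)} s_{2\mu},
\]
upon reindexing $\mu = ((k+j)/2,\,(k-j)/2)$.  The main technical step is the Schur expansion of $p_2[h_k]$; an equivalent and more representation-theoretic route would compute directly the sign $\varepsilon(j) \in \{\pm 1\}$ by which the swap element $\tau = (1,k{+}1)(2,k{+}2)\cdots(k,2k)$ acts on the one-dimensional $(S_k \times S_k)$-invariant line in each $S^{(k+j,\,k-j)}_{\mathbb{C}} \subset M^{(k,k)}_{\mathbb{C}}$, establishing $\varepsilon(j) = (-1)^{k-j}$ via a $2$-quotient / Murnaghan--Nakayama calculation and arriving at the same decomposition.
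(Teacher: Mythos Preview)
Your argument is correct. The translation to symmetric functions via Frobenius characteristic, the identification $\mathrm{ch}\bigl(\mathrm{ind}_{S_k\wr S_2}^{S_{2k}}(\mathbb C)\bigr)=h_2[h_k]$, the Pieri expansion of $h_k^2$, and the telescoping Jacobi--Trudi computation of $p_2[h_k]$ all check out; the final reindexing $\mu=((k+j)/2,(k-j)/2)$ is exactly the bijection between even-parity $j$'s and $P_2(k)$.

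As for comparison: the paper does not give its own proof of this proposition. It is stated as a classical result with a citation to Thrall and then used as input for the subsequent analysis of $S^2(M^\la_{\mathbb C})$ and $S^2(S^{(d,d)}_{\mathbb C})$. Your symmetric-function proof is one of the standard modern routes to Thrall's theorem; the alternative representation-theoretic route you sketch at the end (computing the sign of the swap $\tau$ on the $(S_k\times S_k)$-fixed line in each $S^{(k+j,k-j)}$) is closer in spirit to how the paper itself reasons elsewhere, since Proposition~\ref{youngrul} analyses $S^2(M^\la_{\mathbb C})$ by looking directly at the permutation action on pairs of tableaux. Either route is perfectly acceptable here.
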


\begin{prop}[\cite{BWZ10}]\label{(dd)}
For $ 1\leq k \leq d$ we have that 
\begin{align}\label{formula}
  S^{(d,d)}_\mathbb{C}  \otimes 
    S^{(d+k,d-k) }_\mathbb{C}
    =
    \bigoplus _{i=0}^k S^{(k+i,k,i)P}_\mathbb{C}
    \bigoplus _{i=1}^k S^{(k+1+i,k+1,i)P}_\mathbb{C}
    \end{align}
where  $\gamma P=\{ \mu \mid \mu-\gamma\in O_4(2d)  \text {   or  } \mu-\gamma\in E_4(2d)\}.$ 
\end{prop}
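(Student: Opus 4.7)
The plan is to reduce the calculation to tensor products of Young permutation modules, which admit an explicit orbit decomposition, and then to extract Specht module multiplicities via Young's rule. This parallels the strategy behind the classical formulae of Remmel and Rosas for Kronecker products of two-row characters.

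First, I would use the virtual identity $S^{(a,b)}_\mathbb{C} = M^{(a,b)}_\mathbb{C} - M^{(a+1,b-1)}_\mathbb{C}$ in the character ring of $S_{a+b}$; this is immediate from Young's rule for two-part partitions by M\"obius inversion in dominance order. Applying it to both factors expresses $S^{(d,d)}_\mathbb{C}\otimes S^{(d+k,d-k)}_\mathbb{C}$ as a signed sum of four tensor products of Young permutation modules. Each such product decomposes combinatorially: realising $M^{(a,b)}_\mathbb{C}$ as the permutation module on size-$a$ subsets of $\{1,\ldots,2d\}$, the diagonal $S_{2d}$-orbits on pairs $(A,B)$ with $|A|=a$, $|B|=c$ are parametrised by the intersection size $i=|A\cap B|$, with the stabiliser of a chosen representative being the Young subgroup on the four blocks $A\cap B$, $A\setminus B$, $B\setminus A$, $(A\cup B)^c$. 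This yields $M^{(a,b)}_\mathbb{C}\otimes M^{(c,e)}_\mathbb{C}\cong \bigoplus_i M^{\alpha_i}_\mathbb{C}$ with $\alpha_i=(i,a-i,c-i,2d-a-c+i)$, so the whole expression becomes an alternating sum of four-row Young permutation modules. Young's rule $M^\alpha_\mathbb{C}\cong \bigoplus_\mu K_{\mu,\alpha}\,S^\mu_\mathbb{C}$ then expresses the multiplicity of each $S^\mu_\mathbb{C}$ in the Kronecker product as an explicit alternating sum of Kostka numbers indexed by $\mu$ and the four associated four-part contents.

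The main obstacle is showing that this alternating sum of Kostka numbers collapses to the indicator function of the union $\bigsqcup_{i=0}^k (k+i,k,i)P \;\sqcup\; \bigsqcup_{i=1}^k (k+1+i,k+1,i)P$. A clean approach is a sign-reversing involution on semistandard tableaux of the relevant four-row contents: one pairs tableaux that cancel between adjacent terms of the alternating sum, with fixed points precisely the tableaux satisfying the uniform-parity constraint on $\mu-\gamma$ that defines $\gamma P$. Heuristically, each factor $M^{(a,b)}_\mathbb{C}-M^{(a+1,b-1)}_\mathbb{C}$ picks out tableaux with a prescribed imbalance between their first two rows, and the two independent differences coming from the two factors combine into a single parity constraint on the four-part content.

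An alternative route is to verify the identity inductively in $k$ using the Pieri rule to step between $s_{(d+k,d-k)}$ and $s_{(d+k-1,d-k+1)}$; this converts the alternating sum of Kostka numbers into a telescoping identity which reduces to the base case, namely the decomposition of $[d,d]^2$ given by part~(2) of \Cref{prop:2partmfKronecker}. Either route keeps the combinatorics at the level of permutation-character arithmetic and avoids recursive computation of individual Kronecker coefficients.
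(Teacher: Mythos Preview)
The paper does not prove this proposition at all: it is stated with the citation \cite{BWZ10} and used as a black box in the proof of \Cref{part2}. So there is no proof in the paper to compare your attempt against.

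Your outline is a reasonable route to an independent proof, and is in the spirit of the Rosas/Remmel approach to two-row Kronecker products: express each Specht factor as a difference of permutation modules, decompose the resulting products $M^\alpha\otimes M^\beta$ via the orbit argument you describe, and extract Specht multiplicities by Young's rule. All of these steps are correct and standard.

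However, the substantive work lies entirely in the step you have only sketched: showing that the four-term alternating sum of Kostka numbers collapses to the indicator of the set $\bigcup_i (k+i,k,i)P \cup \bigcup_i (k+1+i,k+1,i)P$. Your paragraph beginning ``Heuristically'' does not constitute an argument, and the claimed sign-reversing involution is not exhibited. Likewise, the inductive alternative via Pieri is asserted to telescope but you have not written down the recursion or checked that the base case $k=0$ (which is $[d,d]^2$, i.e., \Cref{prop:2partmfKronecker}(2)) matches the stated formula under your indexing. Either route can be made to work, but as written the proposal stops exactly where the difficulty begins. If you pursue the involution, you will need to be explicit about which pairs of contents you are cancelling between and why the fixed points are characterised precisely by the parity condition defining $\gamma P$; if you pursue the induction, you should write out $[d,d]\cdot([d+k,d-k]-[d+k-1,d-k+1])$ in terms of the claimed $P$-sets and verify the telescoping directly.
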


We are now ready to begin proving  that no 
 $sg((k,k),\la)=0$ for all $\la \in O_4(2k)$. 
  The first step toward this goal is to understand the symmetric powers of Young permutation modules and their quotients.

\begin{prop}\label{youngrul}
For $\la\in P_2(n)$, we have that 
\begin{align}\label{decompY}
S^2(M^\la_\mathbb{C}) = \bigoplus_{0\leq k \leq \la_2} \bigoplus_{\alpha \in P_2(k)  }
{\rm ind}_{S_{n-2k}\times S_{2k }}^{S_n}  (M^{(\la_1-k,\la_2-k)}_\mathbb{C} \boxtimes  S^{2\alpha}_\mathbb{C} )
\end{align}
and hence the multiplicities  $[S^2(M^\la_\mathbb{C}): S^\mu_\mathbb{C}]$ can be calculated using Young's rule.
\end{prop}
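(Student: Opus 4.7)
The plan is to realise $M^\la_\C$ concretely and analyse the symmetric square as a permutation module on an explicit combinatorial set. Identify $M^\la_\C = \C[X]$, where $X$ is the $S_n$-set of $\la_2$-subsets of $\{1,\ldots,n\}$ (equivalently, cosets of $S_{\la_1}\times S_{\la_2}$). Then $M^\la_\C \otimes M^\la_\C = \C[X\times X]$, and since the swap $\tau$ on tensors corresponds to swapping the two coordinates of $X\times X$, one has $S^2(M^\la_\C) = \C[(X\times X)/\tau]$, the permutation module on unordered pairs (including diagonal pairs) of $\la_2$-subsets.

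I would next decompose $(X\times X)/\tau$ into $S_n$-orbits. A complete invariant of an unordered pair $\{A,B\}$ with $A,B\in X$ is the integer $k := |A\setminus B| = |B\setminus A|$, with $0 \le k\le \la_2$; then $|A\cap B|=\la_2-k$ and $|(A\cup B)^c|=\la_1-k$. For $k=0$ we have $A=B$ and the stabiliser is $S_{\la_1}\times S_{\la_2}$, so this orbit contributes ${\rm ind}^{S_n}_{S_{\la_1}\times S_{\la_2}}(\C) = M^\la_\C$, which is precisely the $\alpha=\emptyset$ term of the claimed formula. For $k\ge 1$ the pointwise stabiliser of $(A,B)$ is $S_{\la_2-k}\times S_k\times S_k\times S_{\la_1-k}$ (permuting $A\cap B$, $A\setminus B$, $B\setminus A$ and $(A\cup B)^c$ independently), and the full stabiliser of the unordered pair $\{A,B\}$ is obtained by additionally allowing a swap of $A\setminus B$ with $B\setminus A$, giving
\[
H_k := \bigl(S_{\la_2-k}\times S_{\la_1-k}\bigr)\times \bigl(S_k\wr S_2\bigr).
\]

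The orbit summand for $k$ is then ${\rm ind}^{S_n}_{H_k}(\C)$. By transitivity of induction through the Young subgroup $S_{n-2k}\times S_{2k}$, this equals
\[
{\rm ind}^{S_n}_{S_{n-2k}\times S_{2k}}\!\Bigl(M^{(\la_1-k,\la_2-k)}_\C \boxtimes {\rm ind}^{S_{2k}}_{S_k\wr S_2}(\C)\Bigr).
\]
Applying Thrall's formula (the proposition quoted just above the statement) to rewrite ${\rm ind}^{S_{2k}}_{S_k\wr S_2}(\C) = \bigoplus_{\alpha\in P_2(k)} S^{2\alpha}_\C$ and summing over $k$ yields the stated decomposition. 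The Young's-rule corollary for the composition multiplicities $[S^2(M^\la_\C):S^\mu_\C]$ follows by expanding each $M^{(\la_1-k,\la_2-k)}_\C$ via Young's rule, applying the Littlewood--Richardson rule (here just Pieri, since $S^{2\alpha}_\C$ is paired with a two-row permutation module) for the outer induction, and summing the resulting contributions.

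The main subtlety is the passage from the ordered orbit (stabiliser $S_{\la_2-k}\times S_k^2\times S_{\la_1-k}$) to the unordered orbit: one must check that the symmetric part (as opposed to the antisymmetric part) of this orbit's contribution to $M^\la_\C\otimes M^\la_\C$ is exactly ${\rm ind}^{S_n}_{H_k}(\C)$. This is because $H_k/(S_{\la_2-k}\times S_k^2\times S_{\la_1-k}) \cong \Z/2$ is generated by the swap $\tau$, so Frobenius reciprocity decomposes the orbit-module into the $\tau$-trivial part (inducing the trivial character of $H_k$, the symmetric part) and the $\tau$-sign part (inducing the non-trivial linear character, the antisymmetric part). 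With this identification in hand, the rest of the argument is routine.
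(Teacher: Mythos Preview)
Your proof is correct and follows essentially the same approach as the paper: both realise $M^\la_\C$ as the permutation module on $\la_2$-subsets (equivalently, second rows of row-standard $\la$-tableaux), decompose the symmetric square into $S_n$-orbits indexed by $k=|A\setminus B|$ (the paper's ``intersection number''), identify the stabiliser of an unordered pair as $(S_{\la_1-k}\times S_{\la_2-k})\times(S_k\wr S_2)$, and then invoke Thrall's formula. Your write-up is somewhat more explicit than the paper's about the transitivity-of-induction step and about why the unordered-orbit module is the symmetric (rather than antisymmetric) summand, but the underlying argument is the same.
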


\begin{proof}
We consider the action of $S_n$ on the   basis
 $
\{	\sts \otimes \stt+\stt \otimes \sts \mid \sts,\stt \in {\sf RStd}(\la)\}.
 $
We define the {\sf intersection number} of a pair of row-standard tableaux as follows 
   $$
  \imath(\sts,\stt) = \la_2- 
   |\{\sts(2,c)\mid 1\leq c \leq \la_2\} \cap \{\stt(2,c)\mid 1\leq c \leq \la_2\}  | $$
and we notice that  
    $\imath(\sts, \stt)= \imath(g(\sts), g(\stt))$ for $g\in S_n$ 
   and so 
   $S^2(M(\la)=\oplus_{0\leq k \leq \la_2}	M_k(\la)$ 
   where 
   $$M_k(\la)= \{\sts \otimes \stt+\stt \otimes \sts \mid 
   \sts,\stt \in {\sf RStd}(\la), 	\imath(\sts, \stt)= k		\}.$$
Each    $M_k(\la)$ is transitive permutation module of $S_{2n}$ with stabiliser given by
$$S_{\la_1-k ,\la_2-k} \times S_{k}\wr S_{2}$$
and so the result follows.  
\end{proof}

\begin{cor}\label{decompY2}
For $d\geq 1$, we have that $S^2(M^{(d+1,d-1}_\mathbb{C}) $ is a submodule of $S^2(M^{(d,d)}_\mathbb{C}) $ and the quotient module is as follows 
\begin{align*} 
S^2(M^{(d,d)}_\mathbb{C}) /  S^2(M^{(d+1,d-1)}_\mathbb{C})  
\cong  
\bigoplus_{0\leq k \leq d
} \bigoplus_{\alpha \in P_2(k)  }
 {\rm ind}_{S_{n-2k}\times S_{2k }}^{S_n}  (S^{(d-k,d-k)}_\mathbb{C} \boxtimes   S^{2\alpha}_\mathbb{C} ).
\end{align*}
\end{cor}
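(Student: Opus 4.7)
The plan is to apply Proposition \ref{youngrul} to both $\la=(d,d)$ and $\la=(d+1,d-1)$ and compare the two decompositions term by term, relying on an elementary Young's rule identity. First I would record the identity: for $0\le k<d$, Young's rule gives
$$M^{(d-k,d-k)}_\C \cong M^{(d+1-k,d-1-k)}_\C \oplus S^{(d-k,d-k)}_\C$$
as $\C S_{2(d-k)}$-modules, since the only $\la \vdash 2(d-k)$ for which the Kostka numbers $K_{\la,(d-k,d-k)}$ and $K_{\la,(d+1-k,d-1-k)}$ differ is $\la=(d-k,d-k)$, where the first is $1$ and the second is $0$. At the boundary $k=d$ we have $M^{(0,0)}_\C = S^{(0,0)}_\C = \C$, and the Young's-rule sum for $(d+1,d-1)$ has no $k=d$ summand at all.

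Next I would apply Proposition \ref{youngrul} separately to each of $(d,d)$ and $(d+1,d-1)$. Since induction from a Young subgroup is exact and every $\C S_n$-module is semisimple, the direct-summand inclusion $M^{(d+1-k,d-1-k)}_\C \hookrightarrow M^{(d-k,d-k)}_\C$ survives tensoring with $S^{2\alpha}_\C$ and inducing up to $S_n$. Summing these inclusions over $0\le k \le d-1$ and $\alpha \in P_2(k)$ produces the required embedding $S^2(M^{(d+1,d-1)}_\C)\hookrightarrow S^2(M^{(d,d)}_\C)$, and the cokernel is the term-by-term quotient
$$\bigoplus_{0\le k\le d}\bigoplus_{\alpha \in P_2(k)} {\rm ind}_{S_{n-2k}\times S_{2k}}^{S_n}\!\bigl(S^{(d-k,d-k)}_\C \boxtimes S^{2\alpha}_\C\bigr),$$
where the $k=d$ summand on the $(d,d)$-side passes through to the quotient in its entirety (treating its missing $(d+1,d-1)$-partner as zero). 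This is exactly the stated decomposition.

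No serious obstacle arises: once the Young's rule identity of the first paragraph is in place, everything else is formal, combining exactness of induction, semisimplicity over $\C$, and the careful bookkeeping of the boundary index $k=d$.
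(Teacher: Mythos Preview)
Your proposal is correct and takes essentially the same approach as the paper: both apply Proposition~\ref{youngrul} to $\la=(d,d)$ and $\la=(d+1,d-1)$ and compare term by term, using the Young's rule identity $M^{(d-k,d-k)}_\C \cong M^{(d+1-k,d-1-k)}_\C \oplus S^{(d-k,d-k)}_\C$ on the first tensor factor (the paper phrases this via the explicit permutation modules $M_k(\la)$ from the proof of Proposition~\ref{youngrul}, but the content is identical). Your bookkeeping at the boundary $k=d$ is exactly what is needed.
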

\begin{proof}
Setting   $\la=(d,d)$,  the righthand-side of \ref{decompY} has $d$ terms (the $M_k(d,d)$ for $0\leq k \leq d$).
Setting   $\la=(d+1,d-1)$,  the righthand-side of \ref{decompY} has  $(d-1)$ terms
(the $M_k(d+1,d-1)$ for $0\leq k \leq d-1$).  
For each $0\leq k \leq d-1$ we have that  $M_k(d+1,d-1)$ is a submodule of $M_k(d,d)$ and the quotient module is isomorphic to 
$$\bigoplus_{\alpha \in P_2(k)  }
 {\rm ind}_{S_{n-2k}\times S_{2k }}^{S_n}  (S^{(d-k,d-k)}_\mathbb{C} \boxtimes   S^{2\alpha}_\mathbb{C} ), 
$$
and the result follows.
\end{proof}
 
   \begin{prop}\label{part2}
 For $\mu \in O_4(n)$, we have that 
 $[S^2(S^{(d,d)}_\mathbb{C}): S^\mu_\mathbb{C}] =0$.

\end{prop}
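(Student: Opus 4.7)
The plan is to compute the multiplicity $[S^2(S^{(d,d)}_\mathbb C):S^\mu_\mathbb C]$ for $\mu\in O_4(n)$ by expressing it as a difference of two explicit non-negative integer counts coming from the permutation-module machinery of Propositions~\ref{youngrul} and~\ref{decompY2}, and then showing that these two counts coincide.

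First I would combine Young's rule, which gives $M^{(d,d)}_\mathbb C\cong S^{(d,d)}_\mathbb C\oplus M^{(d+1,d-1)}_\mathbb C$, with Lemma~\ref{lem:SA-sum} to produce the virtual identity
\[
S^2(S^{(d,d)}_\mathbb C)=\bigl(S^2(M^{(d,d)}_\mathbb C)/S^2(M^{(d+1,d-1)}_\mathbb C)\bigr)-S^{(d,d)}_\mathbb C\otimes M^{(d+1,d-1)}_\mathbb C
\]
in the Grothendieck group.  Corollary~\ref{decompY2} rewrites the quotient as an explicit direct sum of modules induced from $S_{n-2k}\times S_{2k}$, and applying Young's rule once more gives $M^{(d+1,d-1)}_\mathbb C=\bigoplus_{i=1}^d S^{(d+i,d-i)}_\mathbb C$.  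Taking multiplicities of $S^\mu_\mathbb C$ on both sides reduces the claim to proving the identity $A(\mu)=B(\mu)$, where
\[
A(\mu)=\sum_{k=0}^{d}\sum_{\alpha\in P_2(k)}c^{\mu}_{(d-k,d-k),\,2\alpha},\qquad B(\mu)=\sum_{i=1}^{d}\bigl[S^{(d,d)}_\mathbb C\otimes S^{(d+i,d-i)}_\mathbb C:S^\mu_\mathbb C\bigr],
\]
with $c^{\mu}_{\lambda,\nu}$ the Littlewood--Richardson coefficient.

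Next I would unfold both sides combinatorially.  On the $A$-side, the weight $2\alpha$ has at most two parts and $\mu/(d-k,d-k)$ has at most four rows; so a Littlewood--Richardson tableau of shape $\mu/(d-k,d-k)$ and content $2\alpha$ is extremely rigid: row~1 of the skew is entirely $1$'s, rows~2 and~4 are entirely $2$'s, and row~3 is a string of $1$'s followed by a string of $2$'s.  This forces $\mu_3\le d-k\le\mu_2$, and, since $\mu$ has four odd parts, the parity of the break-point in row~3 is determined by the parity of $d-k$.  On the $B$-side, Proposition~\ref{(dd)} makes each tensor product multiplicity-free, indexed by those partitions $\mu-\gamma$ all of whose parts have the same parity.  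Because $\mu\in O_4(n)$ has $\mu_4$ odd, the ``all even'' case is impossible and $\gamma$ must have all even entries; this restricts each of the two families of $\gamma$ in Proposition~\ref{(dd)} to explicit parity classes of $(i,j)$.  Both $A(\mu)$ and $B(\mu)$ thus become cardinalities of finite sets of integer pairs defined by linear and parity inequalities.

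The remaining step is to exhibit an explicit bijection between these two sets.  A natural candidate is to send the pair $(k,a)$ parametrising an LR tableau with $a$ ones in row~3 to the pair $(i,j)$ on the $B$-side with $j=\mu_3-a$ and with $i$ chosen from the first (even-$i$) family or the second (odd-$i$) family of Proposition~\ref{(dd)} according to the parity of $d-k$.  The main obstacle will be checking that the lattice-word and dominance inequalities on the $A$-side correspond precisely to the ``$\mu-\gamma$ is a weakly decreasing sequence of non-negative integers'' inequalities on the $B$-side, case-by-case across the two parity families.  Once this bookkeeping is carried out one obtains $A(\mu)=B(\mu)$, and hence $[S^2(S^{(d,d)}_\mathbb C):S^\mu_\mathbb C]=0$, as claimed.
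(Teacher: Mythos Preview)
Your plan follows the paper's proof almost exactly: the same virtual identity from Young's rule and Lemma~\ref{lem:SA-sum}, the same use of Corollary~\ref{decompY2} for the quotient side and Proposition~\ref{(dd)} for the tensor-product side, and the same LR-tableau analysis showing that the only freedom lies in the number of $2$'s placed in row~3. The one departure is in the final verification: rather than constructing your proposed bijection (which you leave unchecked), the paper simply evaluates both $A(\mu)$ and $B(\mu)$ to the common closed formula
\[
\tfrac12\bigl(\min\{\mu_3-\mu_4,\mu_1-\mu_2\}\,(\mu_2-\mu_3)+\min\{\mu_3-\mu_4,\mu_1-\mu_2\}+(\mu_2-\mu_3)\bigr),
\]
splitting according to the parity of $\mu_2-m$ on the $A$-side and, after reparametrising as $\gamma=2(c+j,c,j)$, according to whether $\gamma_3\in\{0,\gamma_2\}$ on the $B$-side; this direct count avoids the case-checking your bijection would require.
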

\begin{proof}  \color{black}
For $\la,\nu \in P_2(n)$, we have that  $|{\rm SStd}(\la,\nu)|=1$ if $\la \trianglerighteq \nu$ (and is 0 otherwise).  Therefore 
 $$   
[M^{(d,d)}_\mathbb{C} ]
=
[S^{(d,d)}_\mathbb{C} ] + [M^{(d+1,d-1)}_\mathbb{C} ]$$
by Young's rule.  \color{black}
Rearranging this,  we note that  \begin{align*}
\chi_S^2 [S^{(d,d)}_\mathbb{C} ] 
&=
\chi^2_S([M^{(d,d)}_\mathbb{C} ]-[M^{(d+1,d-1)}_\mathbb{C} ])
\\
&=\chi^2_S [M^{(d,d)} _\mathbb{C}]
-
\chi^2_S [M^{(d+1,d-1)}_\mathbb{C} ]
-
[S^{(d,d)}_\mathbb{C}] \otimes  [M^{(d+1,d-1)}_\mathbb{C} ].
\end{align*}
 We need to verify,  for  
 $\mu\in O_4(n)$,  that the following holds 
\begin{equation}\label{alabelforaneqn}
\langle \chi^2_S [M^{(d,d)} _\mathbb{C}]
-
\chi^2_S [M^{(d+1,d-1)}_\mathbb{C} ] \mid [S^\mu_\mathbb{C}]\rangle = 
 \langle 
[S^{(d,d)}_\mathbb{C}] \otimes  [M^{(d+1,d-1)}_\mathbb{C} ]\mid [S^\mu_\mathbb{C}]\rangle  .
\end{equation}
We calculate the lefthand-side of equation (\ref{alabelforaneqn}) using Proposition \ref{decompY2}  and the righthand-side of equation (\ref{alabelforaneqn}) using Proposition \ref{(dd)}.  

 
\bigskip
\noindent{\bf Calculating the lefthand-side of equation  \ref{alabelforaneqn}. }
For $\mu \in O_4(n)$, we will  calculate the multiplicity of any $S^\mu_\mathbb{C}$ in the righthand-side of \ref{decompY2} using the Littlewood--Richardson  rule.  
Namely, we will calculate 
$$
\sum_{0\leq k \leq d} 
\sum_{\alpha\in P_2(k)} |{\sf LR}(\mu \setminus (d-k,d-k) , 2\alpha)|
$$
for $\mu \in O_4(2d)$.   Now, since $ (d-k,d-k)$ and $\alpha$ are both two-row partitions and   $\mu$ is a four-row partition, 
we deduce that  necessary conditions for $ |{\sf LR}(\mu \setminus (d-k,d-k) , 2\alpha)|\neq 0$ are  that  $d-k\geq \mu_3$ and 
 $d\geq \mu_2\geq d-k$.  Therefore we can set $m=d-k$ and  rewrite the above sum as follows 
 $$
\sum_{\mu_3	\leq m \leq \mu_2} 
\sum_{\alpha\in P_2(d-m)} |{\sf LR}(\mu \setminus (m,m) , 2\alpha)|.
$$
For any ${\sf T} \in  {\sf LR}(\mu \setminus (m,m) , 2\alpha)$, we have that 
  $${\sf T}(3,c)=1 \qquad  {\sf T}(4,c)=2 $$
 for all $1\leq c \leq \mu_4$.  Furthermore, 
 $${\sf T}(1,c)=1\qquad {\sf T}(2,c)=2$$
 for all $\mu_2\leq c\leq \mu_1$. 
 At this point,  we split into  two families of tableaux, depending on the parity of 
$\mu_2-m$. 
 We will use without further mention the fact 
  that $\mu_1-\mu_2, \mu_2-\mu_3$, $2\alpha_1$ and $2\alpha_2$ are even numbers and that 
  $\mu_3-\mu_4$ is odd.  

 If $ \mu_2-m$ is even, then we   fill the remaining  entries
 ${\sf T}(3,c)$ with an odd number of $2$s (and an odd number of $1$s).  We note that the total number of $2$s used in filling these boxes must be less than or equal to $\mu_1-\mu_2$, in order to satisfy the semistandard condition.    Thus the total number of such tableaux is given by
\begin{align}\label{sum1}
 \tfrac{1}{2}\min\{\mu_3-\mu_4,\mu_1-\mu_2\} \times  \tfrac{1}{2}(\mu_2-\mu_3+2)
\end{align}
 where there are $\tfrac{1}{2}\min\{\mu_3-\mu_4,\mu_1-\mu_2\} $ choices for the number of $2s$ in the third row and  where the
  value $\mu_3\leq  m\leq \mu_2$ must satisfy the parity condition.  

 If $m-\mu_2$ is odd, then we   fill the remaining entries
 ${\sf T}(3,c)$ with an even number of $2$s (and an even number of $1$s).  
  Similarly to the above, the total number of such tableaux is given by
\begin{align}\label{sum2}
 \tfrac{1}{2}(  \min\{\mu_3-\mu_4,\mu_1-\mu_2\}+2) \times  \tfrac{1}{2}(\mu_2-\mu_3 ).
\end{align}
 Therefore, summing over (\ref{sum1})  and  (\ref{sum2}) we deduce that   the lefthand-side of equation \ref{alabelforaneqn} is equal to 
\begin{align*}    \tfrac{1}{2}
 (\min\{\mu_3-\mu_4,\mu_1-\mu_2\} (\mu_2-\mu_3)
 +
 \min\{\mu_3-\mu_4,\mu_1-\mu_2\}+ (\mu_2-\mu_3))
\end{align*} for $\mu \in O_4(n)$.

 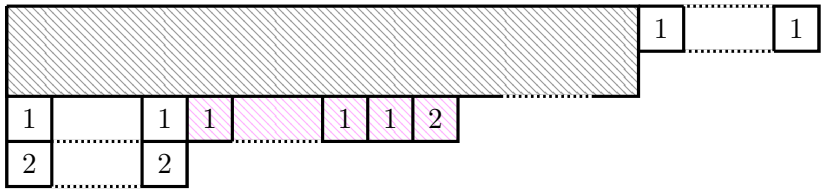
\begin{figure}[ht!]
 $$
 \begin{tikzpicture}[scale=0.6]
   
  \fill[pattern=north west lines,pattern color=magenta!30](4,-3)rectangle (10,-2);

  \draw[very thick,pattern=north west lines,pattern color=gray!80](0,0)--++(-90:2)--++(0:14)--++(90:2)  --(0,0);

  \path(0,0)--++(-90:2)--++(0:14)--++(90:2) coordinate (la1)--(0,0);

 \draw[very thick](la1)--++(0:1) coordinate (dot3)--++(0:2)coordinate (dot4)--++(0:1)--++(-90:1)--++(180:1)
 coordinate (dot5)--++(180:2)
 coordinate (dot6) --++(180:1);

 \draw[very thick](0,0)--++(-90:4*1)
 --++(0:1*1) coordinate (dot1) 
 --++(0:2*1) coordinate (dot2)  
  --++(0:1*1)  
    --++(90:1*1)  
      --++(0:1*1) 
      coordinate (dot7)
            --++(0:2*1) 
      coordinate (dot8)
            --++(0:3*1) 
           --++(90:1*1)
 ;

 \draw[very thick](dot7)--++(180:2) 
       coordinate (dot9)--++(180:2) 
       coordinate (dot10)--++(180:1) 
;

 \path 
 (0,0)--++(-90:4*1)
 --++(0:1*1)  
 --++(0:2*1) 
  --++(0:1*1)  
    --++(90:1*1)  
      --++(0:6*1)      --++(90:1*1)
      --++(0:1) coordinate (X)       --++(0:2) coordinate (Y)         --++(0:1)  --++(90:1)--++(0:4)--++(90:1);

  \draw[line width=1.3,white ,densely dotted] 
 (dot1)--(dot2)
  (dot3)--(dot4)
   (dot5)--(dot6)
      (dot7)--(dot8)
            (dot9)--(dot10) (X)--(Y);

 \clip 
 (0,0)--++(-90:4*1)
 --++(0:1*1)  
 --++(0:2*1) 
  --++(0:1*1)  
    --++(90:1*1)  
      --++(0:6*1)      --++(90:1*1)
      --++(0:1) coordinate (X)       --++(0:2) coordinate (Y)         --++(0:1)  --++(90:1)--++(0:4)--++(90:1);

     \foreach \i in {0,1,3,4,5,7,8,9}
         {	\draw[very thick](\i,-2)--++(-90:4);	}
        \foreach \i in          { 14,15,17,18}
  {	\draw[very thick](\i,0)--++(-90:4);	}
   \foreach \i in {0,1,...,18}
 {	\path(\i+0.5,0)--++(-90:0.5) coordinate (a\i);
 	\path(\i+0.5,0)--++(-90:1.5) coordinate (b\i);
	\path(\i+0.5,0)--++(-90:2.5) coordinate (c\i);
	\path(\i+0.5,0)--++(-90:3.5) coordinate (d\i);	}


 \draw(a14) node {$1$};
  \draw(a17) node {$1$};
    \draw(c0) node {$1$};
      \draw(c3) node {$1$};
        \draw(c4) node {$1$};
    \draw(d0) node {$2$};
        \draw(d3) node {$2$};
        
        \draw(c7) node {$1$};        
        \draw(c8) node {$1$};        
       \draw(c9) node {$2$};                
 
 \end{tikzpicture}
 $$

 \caption{A generic example  of a semistandard tableau  for $\mu_2-m=0 $   
  and $\alpha$   maximal    in the dominance ordering.
We note that  there are an odd number of 2s (and therefore an  odd number of 1s) in the pink region.  
   We note that for a fixed value $m$, the \emph{ only} choices 
 we can make are in the pink region.}
\end{figure}

\begin{figure}[ht!]
$$
  \begin{tikzpicture}[scale=0.6]
   
  \fill[pattern=north west lines,pattern color=magenta!30](4,-3)rectangle (10,-2);

 \draw[pattern=north west lines,pattern color=gray!80](0,0)--++(-90:2)--++(0:13)--++(90:2)  --(0,0);
 
  \draw[very thick](0,0)--++(-90:2)--++(0:14)--++(90:2) coordinate (la1)--(0,0);

  \draw[very thick] (la1)--++(-90:1)--++(180:1);

 \draw[very thick](la1)--++(0:1) coordinate (dot3)--++(0:2)coordinate (dot4)--++(0:1)--++(-90:1)--++(180:1)
 coordinate (dot5)--++(180:2)
 coordinate (dot6) --++(180:1);

 \draw[very thick](0,0)--++(-90:4*1)
 --++(0:1*1) coordinate (dot1) 
 --++(0:2*1) coordinate (dot2)  
  --++(0:1*1)  
    --++(90:1*1)  
      --++(0:1*1) 
      coordinate (dot7)
            --++(0:2*1) 
      coordinate (dot8)
            --++(0:3*1) 
           --++(90:1*1)
 ;

 \draw[very thick](dot7)--++(180:2) 
       coordinate (dot9)--++(180:2) 
       coordinate (dot10)--++(180:1) 
;
       
 \draw[line width=1.3,white ,densely dotted] 
 (dot1)--(dot2)
  (dot3)--(dot4)
   (dot5)--(dot6)
      (dot7)--(dot8)
            (dot9)--(dot10);
 
 \path 
 (0,0)--++(-90:4*1)
 --++(0:1*1)  
 --++(0:2*1) 
  --++(0:1*1)  
    --++(90:1*1)  
      --++(0:6*1)      --++(90:1*1)
      --++(0:1) coordinate (X)       --++(0:1) coordinate (Y)         --++(0:2)  --++(90:1)--++(0:4)--++(90:1);

 \draw[line width=1.3,white ,densely dotted] 
  (X)--(Y);

 \clip 
 (0,0)--++(-90:4*1)
 --++(0:1*1)  
 --++(0:2*1) 
  --++(0:1*1)  
    --++(90:1*1)  
      --++(0:6*1)      --++(90:1*1)
      --++(0:1) coordinate (X)       --++(0:1) coordinate (Y)         --++(0:2)  --++(90:1)--++(0:4)--++(90:1);

        \foreach \i in {0,1,3,4,5,7,8,9}
         {	\draw[very thick](\i,-2)--++(-90:4);	}
        \foreach \i in          {13,14,15,17,18}
  {	\draw[very thick](\i,0)--++(-90:4);	}

     \foreach \i in {0,1,3,4,5,7,8,9}
         {	\draw[very thick](\i,-2)--++(-90:4);	}
        \foreach \i in          {13,14,15,17,18}
  {	\draw[very thick](\i,0)--++(-90:4);	}
   \foreach \i in {0,1,...,18}
 {	\path(\i+0.5,0)--++(-90:0.5) coordinate (a\i);
 	\path(\i+0.5,0)--++(-90:1.5) coordinate (b\i);
	\path(\i+0.5,0)--++(-90:2.5) coordinate (c\i);
	\path(\i+0.5,0)--++(-90:3.5) coordinate (d\i);	}


 \draw(a13) node {$1$};
 \draw(b13) node {$2$};

 \draw(a14) node {$1$};
  \draw(a17) node {$1$};
    \draw(c0) node {$1$};
      \draw(c3) node {$1$};
        \draw(c4) node {$1$};
    \draw(d0) node {$2$};
        \draw(d3) node {$2$};
        
        \draw(c7) node {$1$};        
        \draw(c8) node {$1$};        
       \draw(c9) node {$1$};                
 
 \end{tikzpicture}$$

 $$
  \begin{tikzpicture}[scale=0.6]
   
  \fill[pattern=north west lines,pattern color=magenta!30](4,-3)rectangle (10,-2);

   \draw[pattern=north west lines,pattern color=gray!80](0,0)--++(-90:2)--++(0:13)--++(90:2)  --(0,0);
 
  \draw[very thick](0,0)--++(-90:2)--++(0:14)--++(90:2) coordinate (la1)--(0,0);

  \draw[very thick] (la1)--++(-90:1)--++(180:1);

 \draw[very thick](la1)--++(0:1) coordinate (dot3)--++(0:2)coordinate (dot4)--++(0:1)--++(-90:1)--++(180:1)
 coordinate (dot5)--++(180:2)
 coordinate (dot6) --++(180:1);

 \draw[very thick](0,0)--++(-90:4*1)
 --++(0:1*1) coordinate (dot1) 
 --++(0:2*1) coordinate (dot2)  
  --++(0:1*1)  
    --++(90:1*1)  
      --++(0:1*1) 
      coordinate (dot7)
            --++(0:2*1) 
      coordinate (dot8)
            --++(0:3*1) 
           --++(90:1*1)
 ;

 \draw[very thick](dot7)--++(180:2) 
       coordinate (dot9)--++(180:2) 
       coordinate (dot10)--++(180:1) 
;
       
 \draw[line width=1.3,white ,densely dotted] 
 (dot1)--(dot2)
  (dot3)--(dot4)
   (dot5)--(dot6)
      (dot7)--(dot8)
            (dot9)--(dot10);
 
 \path 
 (0,0)--++(-90:4*1)
 --++(0:1*1)  
 --++(0:2*1) 
  --++(0:1*1)  
    --++(90:1*1)  
      --++(0:6*1)      --++(90:1*1)
      --++(0:1) coordinate (X)       --++(0:1) coordinate (Y)         --++(0:2)  --++(90:1)--++(0:4)--++(90:1);

 \draw[line width=1.3,white ,densely dotted] 
  (X)--(Y);

 \clip 
 (0,0)--++(-90:4*1)
 --++(0:1*1)  
 --++(0:2*1) 
  --++(0:1*1)  
    --++(90:1*1)  
      --++(0:6*1)      --++(90:1*1)
      --++(0:1) coordinate (X)       --++(0:2) coordinate (Y)         --++(0:1)  --++(90:1)--++(0:4)--++(90:1);

     \foreach \i in {0,1,3,4,5,7,8,9}
         {	\draw[very thick](\i,-2)--++(-90:4);	}
        \foreach \i in          {13,14,15,17,18}
  {	\draw[very thick](\i,0)--++(-90:4);	}
   \foreach \i in {0,1,...,18}
 {	\path(\i+0.5,0)--++(-90:0.5) coordinate (a\i);
 	\path(\i+0.5,0)--++(-90:1.5) coordinate (b\i);
	\path(\i+0.5,0)--++(-90:2.5) coordinate (c\i);
	\path(\i+0.5,0)--++(-90:3.5) coordinate (d\i);	}


 \draw(a13) node {$1$};
 \draw(b13) node {$2$};

 \draw(a14) node {$1$};
  \draw(a17) node {$1$};
    \draw(c0) node {$1$};
      \draw(c3) node {$1$};
        \draw(c4) node {$1$};
    \draw(d0) node {$2$};
        \draw(d3) node {$2$};
        
        \draw(c7) node {$1$};        
        \draw(c8) node {$2$};        
       \draw(c9) node {$2$};                
 
 \end{tikzpicture}$$
 
 \caption{
Generic examples of semistandard tableaux for $\mu_2-m=1 $   
  and $\alpha$   maximal  and near maximal in the dominance ordering.
There are an odd number of 2s (and therefore an even number of 1s) in the pink region.  
 Again, we emphasise  that for a fixed value $m$, the \emph{ only} choices 
 we can make are in the pink region.}
\end{figure}
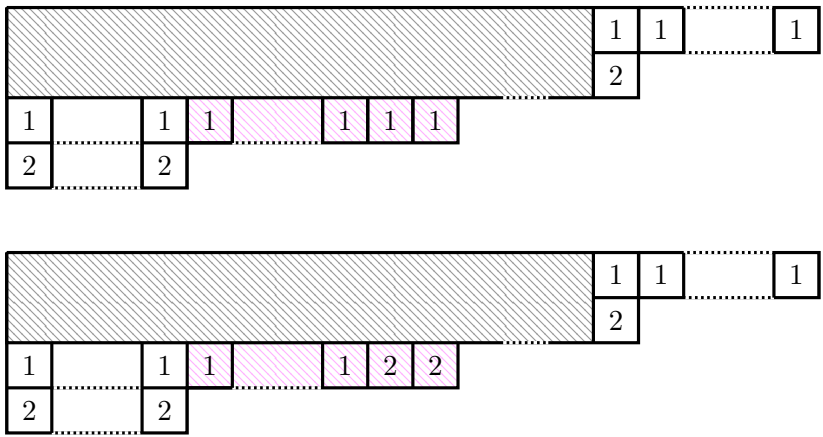

\bigskip
\noindent{\bf Calculating the righthand-side of equation \ref{alabelforaneqn}. }
We first remark that if  $\mu \in \gamma P$ then $\mu$  has 4 non-zero rows, whereas $\gamma$ only has 3 rows.  Therefore $\mu \setminus \gamma$ has an odd fourth row; hence  all its rows are odd and $(\mu -\gamma) \in O_4(n)$.
This implies that $\gamma$ must have only even rows.  
We note that precisely one of the two summands in \ref{formula} has $\gamma_2$ even.  In particular, for $k$ even the righthand sum is zero (since $\gamma_2=k+1$ is odd) and 
 for $k$ odd the lefthand sum is zero (since $\gamma_2=k$ is odd).  
Moreover, 
we are summing over all $S^{(d,d)}_\mathbb{C}\otimes S^{(d+k,d-k)}_\mathbb{C} $ for  $1\leq k \leq d$ and we notice  that the non-zero term in the $k$th even case can be rewritten as follows 
$$    \sum_{0\leq i \leq k}  [S^{ (k+i,k,i)P}_\mathbb{C}] = 
 [S^{(k,k)P}_\mathbb{C}]+ [S^{(2k,k,k)P}_\mathbb{C}] +  \sum_{0<i <k}  [S^{(k+i,k,i)P}_\mathbb{C}] 
   $$
 where the  final sum on the righthand-side is equal to the non-zero term in the $(k-1)$th case, namely  
 $$    \sum_{i=1}^{k-1} [S^{((k-1)+1+i,(k-1)+1,i)P}_\mathbb{C}].
   $$
 Thus, it remains to understand the non-empty even partitions $\gamma=2(c+j,c,j)$    such that $\mu -\gamma  $ is itself a partition; we denote this set by 
   $$D_{\mu}=\{\gamma=2(c+j,c,j)\mid \gamma \neq \varnothing  \text { and } 
  \mu -\gamma \text{ is a partition}\}.$$
 By the above and \ref{formula}, we have that 
\begin{equation}\label{subsitute}
\langle  [S^{(d,d)}_\mathbb{C}\otimes M^{(d+1,d-1)}_\mathbb{C}] \mid [S^\mu_\mathbb{C}] \rangle 
=
2|\{\gamma\in D_{\mu}\mid     \gamma_3\not \in \{0,\gamma_2\}\}|
+
|\{\gamma\in D_{\mu}\mid   \gamma_3 \in\{0, \gamma_2\}\}|
.\end{equation}
We have that $\beta=\mu-\gamma$ is a partition if and only 
\begin{itemize}
\item $\beta_1-\beta_2\geq 0$ which is equivalent to $j\leq \tfrac{1}{2}(\mu_1-\mu_2)$; and  
\item  $\beta_3-\beta_4\geq 0$ which is equivalent to $j\leq \tfrac{1}{2}(\mu_3-\mu_4)$; and  
\item $\beta_2-\beta_3\geq 0$ which is equivalent to $c \leq  \tfrac{1}{2}(\mu_2-\mu_3)+j$.
\end{itemize}
Putting the first two equalities together with the fact that $ (c+j,c,j)$ is itself a partition,  we deduce that 
$0\leq j\leq  \tfrac{1}{2}\min\{\mu_3-\mu_4,\mu_1-\mu_2\}$. 
Putting the final equality together with the fact that $ (c+j,c,j)$ is itself a partition,  we deduce that $j\leq c \leq  \tfrac{1}{2}(\mu_2-\mu_3)+j$.  
Thus we have that 
\begin{align*}
2|\{\gamma\in D_{\mu}\mid  \gamma_3 \not \in \{0, \gamma_2\}				\}|
&=2|\{2(c+j,c,j)\in D_{\mu}\mid 0\neq  j \neq c\}|
\\
&= 
2\times (\tfrac{1}{2}\min\{\mu_3-\mu_4,\mu_1-\mu_2\}\times \tfrac{1}{2}(\mu_2-\mu_3))
\end{align*}
and setting $j=0$ or $j=c$ respectively, we obtain 
$$
 |\{\gamma\in D_{\mu}\mid   \gamma_3 =0\}|
= \tfrac{1}{2}(\mu_2-\mu_3) ,
\qquad
 |\{\gamma\in D_{\mu}\mid   \gamma_3 =\gamma_2\}|=
  \tfrac{1}{2}\min\{  \mu_1-\mu_2, \mu_3-\mu_4\} .
 $$ 
Substituting these three terms  into the righthand-side of \ref{subsitute}, we obtain
 \begin{align*}    \tfrac{1}{2}
 (\min\{\mu_3-\mu_4,\mu_1-\mu_2\} (\mu_2-\mu_3)
 +
 \min\{\mu_3-\mu_4,\mu_1-\mu_2\}+ (\mu_2-\mu_3))
\end{align*}
as required.  
\end{proof}

\bigskip

Thus putting together Propositions \ref{part1} and  \ref{part2} we immediately  obtain the following rather nice
splitting of the square $[k,k]^2$:

\begin{theorem}\label{thmkksplit}
For $k\in \N$ and $n=2k$, we have
\[
\-
S^2([k,k]) =\sum_{\lambda \in E_4(n)} [\lambda] \;,\quad
A^2([k,k]) =\sum_{\lambda \in O_4(n)}[\lambda]\:.
\]
\end{theorem}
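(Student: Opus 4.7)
The plan is to assemble the theorem as an immediate consequence of the three results already in hand, namely Proposition~\ref{prop:2partmfKronecker}(2), Proposition~\ref{part1}, and Proposition~\ref{part2}, together with the defining relation $[k,k]^2 = S^2([k,k]) + A^2([k,k])$ from equation~(\ref{jhhj}).

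More precisely, I would argue as follows. By Proposition~\ref{prop:2partmfKronecker}(2), the Kronecker square decomposes multiplicity-freely as
\[
[k,k]^2 = \sum_{\la \in E_4(n)}[\la] + \sum_{\la \in O_4(n)}[\la],
\]
so for every constituent $[\la]$ of $[k,k]^2$ we have $sg((k,k),\la)+ag((k,k),\la) = 1$. On the one hand, Proposition~\ref{part1} shows $sg((k,k),\la)=1$ for every $\la \in E_4(n)$, which forces $ag((k,k),\la)=0$ for all such $\la$. On the other hand, Proposition~\ref{part2} shows $sg((k,k),\mu) = 0$ for every $\mu \in O_4(n)$, which forces $ag((k,k),\mu)=1$ for all such $\mu$. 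All remaining partitions of $n$ contribute zero to $[k,k]^2$ and hence zero to both $S^2([k,k])$ and $A^2([k,k])$. Combining these observations yields precisely the claimed identities.

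There is no real obstacle at this stage: the combinatorial content has been absorbed into Propositions~\ref{part1} and~\ref{part2}, and the splitting claim is now a formal bookkeeping step. One should simply verify that the three inputs are applied consistently (in particular that $E_4(n)\cap O_4(n) = \varnothing$, which is automatic since $n=2k$ forces the parities of the parts in $E_4(n)$ and $O_4(n)$ to be distinguishable), and then read off the decomposition.
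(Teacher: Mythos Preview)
Your proposal is correct and matches the paper's own argument essentially verbatim: the paper simply states that the theorem follows immediately by putting together Propositions~\ref{part1} and~\ref{part2} (with Proposition~\ref{prop:2partmfKronecker}(2) implicit as the source of the multiplicity-free decomposition of $[k,k]^2$). Your write-up merely makes the bookkeeping explicit, which is fine.
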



The above theorem has an immediate consequence in algebraic combinatorics.  We recall that $C_k=\frac{1}{k+1}{2k \choose k}$ is the $k$-th Catalan number,
and $f(\al)  =[\al](\id )$ is the number of standard Young tableaux of shape~$\al$.

\begin{cor}{\rm
 Given  $\al $ a partition, 
 we define $$s(\al)=\begin{cases}
1 &\text{ if }\alpha\in E_4(n)\\
-1 &\text{ if }\alpha\in O_4(n)\\
0 &\text{ otherwise}. 
\end{cases}$$ 
We have that 
\begin{align}\label{catalan}
C_k = \sum_{\al \in P(2k)} s(\al) f(\al)  .
\end{align}
}
\end{cor}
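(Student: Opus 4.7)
The plan is to read off the identity directly from \Cref{thmkksplit} by evaluating the character identity $\chi^{(2)} = \chi_S - \chi_A$ from equation~(\ref{ppppp2}) at the identity element. Setting $\chi = [k,k]$, \Cref{thmkksplit} gives
\[
\chi_S - \chi_A = \sum_{\lambda \in E_4(2k)} [\lambda] - \sum_{\lambda \in O_4(2k)}[\lambda] = \sum_{\alpha \in P(2k)} s(\alpha)\,[\alpha],
\]
so evaluating at $\id \in S_{2k}$ produces the right-hand side $\sum_{\alpha \in P(2k)} s(\alpha) f(\alpha)$ of \eqref{catalan}.

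For the left-hand side, I use that $\chi^{(2)}(\id) = \chi(\id^2) = \chi(\id)$, which for $\chi = [k,k]$ is simply $f((k,k))$. It is a classical fact (via the hook length formula applied to the $2 \times k$ rectangle, or equivalently the reflection principle / ballot problem) that the number of standard Young tableaux of shape $(k,k)$ is the $k$-th Catalan number:
\[
f((k,k)) = \frac{(2k)!}{\prod_{(i,j)\in Y(k,k)} h_{ij}} = \frac{(2k)!}{(k+1)!\,k!} = \frac{1}{k+1}\binom{2k}{k} = C_k.
\]
Combining the two evaluations yields \eqref{catalan}.

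Since \Cref{thmkksplit} is the substantive content, no further obstacle arises; the corollary is truly a combinatorial shadow of the representation-theoretic splitting, obtained by specialising the virtual character $[k,k]_S - [k,k]_A$ to the identity conjugacy class.
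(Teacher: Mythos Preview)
Your proof is correct and follows essentially the same approach as the paper: evaluate $S^2([k,k])-A^2([k,k])=[k,k]^{(2)}$ at the identity, using \Cref{thmkksplit} for the right-hand side and $f((k,k))=C_k$ for the left. Your additional justification of $f((k,k))=C_k$ via the hook length formula is a harmless elaboration of what the paper states without proof.
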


\begin{proof}
This follows from Theorem~\ref{thmkksplit}  by
evaluating
$S^2([k,k])-A^2([k,k]) = [k,k]^{(2)}$
at the identity, since $C_k=f{(k,k)}$.
\end{proof}

\begin{example}The fourth Catalan number can be calculated as follows
\begin{align*}
C_4&=
14=1+ 20+ 14+ 56+14-35 - 56
\\&=f({\Yvcentermath1\Yboxdim{4pt}\gyoung(;;;;;;;;)})+f({\Yvcentermath1\Yboxdim{4pt}\gyoung(;;;;;;,;;)})
+f({\Yvcentermath1\Yboxdim{4pt}\gyoung(;;;;,;;;;)})
+f({\Yvcentermath1\Yboxdim{4pt}\gyoung(;;;;,;;,;;)})
+f({\Yvcentermath1\Yboxdim{4pt}\gyoung(;;,;;,;;,;;)})
-f({\Yvcentermath1\Yboxdim{4pt}\gyoung(;;;;;,;,;,;)})
-f({\Yvcentermath1\Yboxdim{4pt}\gyoung(;;;,;;;,;,;)}).
\end{align*}
\end{example}

The splitting of the square $[k,k]^2$ immediately implies the splitting
of $[k,k-1]^2$ by restriction; this splitting is explicitly formulated in the
following result.

\begin{theorem}\label{thmk+1ksplit}
For $k\in \N_0$ and $n=2k+1$,
we let  $E_3O_1(n)$ and $E_1O_3(n)$ denote
the sets of partitions $\la\in P_4(n)$
into three even and one odd parts, and one even and three odd parts,
respectively. 
Then we have
\[
\-
S^2([k+1,k]) = \sum_{\la\in E_3O_1(n)} [\lambda]\:, \quad 
A^2([k+1,k]) = \sum_{\la\in E_1O_3(n)} [\lambda] \:.
\]
\end{theorem}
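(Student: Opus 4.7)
The plan is to derive Theorem~\ref{thmk+1ksplit} directly from Theorem~\ref{thmkksplit} via restriction, exactly as the authors hint. I would first observe that the rectangular partition $(k+1,k+1)$ has a unique removable corner (at position $(2,k+1)$), so the standard branching rule gives $[k+1,k+1]\downarrow_{S_{2k+1}} = [k+1,k]$. Since restriction commutes with the operations $g \mapsto g$ and $g \mapsto g^2$ appearing in Lemma~\ref{lem:SA-values}, this single-constituent restriction immediately yields
\[
S^2([k+1,k+1])\downarrow_{S_{2k+1}} = S^2([k+1,k]), \qquad A^2([k+1,k+1])\downarrow_{S_{2k+1}} = A^2([k+1,k]).
\]

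Next I would apply Theorem~\ref{thmkksplit} and restrict term by term. For $\mu \in E_4(2k+2)$, removing any corner changes precisely one even part of $\mu$ into an odd part while leaving the other parts untouched, so every constituent of $[\mu]\downarrow_{S_{2k+1}}$ lies in $E_3O_1(2k+1)$. Conversely, given $\nu \in E_3O_1(2k+1)$ with its unique odd part in row $i$, the only way to add a single box to $\nu$ and obtain a partition all of whose parts are even is to add the box in row $i$; the row above $\nu_i$ (if any) is even and hence strictly greater than the odd number $\nu_i$, so the resulting $\mu$ is automatically a valid partition. This gives a bijection between pairs $(\mu,c)$, where $\mu \in E_4(2k+2)$ and $c$ is a removable corner of $\mu$, and partitions $\nu \in E_3O_1(2k+1)$. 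Summing yields the claimed decomposition of $S^2([k+1,k])$.

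The argument for $A^2$ is entirely analogous, with $O_4(2k+2)$ in place of $E_4(2k+2)$: removing a corner from a partition with all odd parts converts exactly one odd part into an even part, and the corresponding bijection lands in $E_1O_3(2k+1)$. There is no genuine obstacle once Theorem~\ref{thmkksplit} is in hand; the only mild subtlety is the parity bookkeeping in the two bijections, which is immediate once one notes that an even part strictly dominates any odd part lying below it.
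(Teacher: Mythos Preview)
Your argument is correct and matches the paper's intended approach: the paper simply remarks that Theorem~\ref{thmkksplit} ``immediately implies the splitting of $[k,k-1]^2$ by restriction'' without spelling out the bijection, and you have supplied exactly those details (restricting from $(k+1,k+1)$ rather than $(k,k)$, which is only a shift of index). The parity bookkeeping you describe is precisely the content of the restriction, and your verification that adding a box to the unique part of the ``wrong'' parity always yields a valid partition (because adjacent parts of different parity are automatically strictly ordered) is the only point that needs checking.
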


There are some further closely related cases that
have nice decompositions.
The decomposition of the Kronecker square $[k+1,k-1]^2$
(which is not multiplicity-free)
into its irreducible constituents
can be obtained from the general 2-part formulae,
but is also easily derived from the special formulae for
$[k,k]^2$ and $[k,k]  [k+1,k-1]$.

For $\la\in P_4(n)$,
we denote by $d(\la)$ the number of different parts of $\la$;
note that here we are also taking 0 into account, so this may be
different from the number $r_1$ of removable boxes.

\begin{prop}\label{prop:2partalmostmfKronecker}
Let $k\in \N$, $n=2k$.  Let  $E_2O_2'(n)$ (and $E_2O_2'(n)$, respectively)
denote
the set of partitions $\la\in P_4(n)$ with two even and two odd parts,
where the two odd parts (the two even parts, respectively)
are different (again, 0 is considered as an even part).
Then
\[
\-
[k+1,k-1]^2 =\sum_{\lambda \in E_4(n)\cup O_4(n)} (d(\la)-1) [\lambda]
+\sum_{\lambda \in E_2O_2'(n)}[\lambda]
+\sum_{\lambda \in E_2'O_2(n)}[\lambda]\:.
\]
Setting $\delta_\la=1$ if $\la$ has exactly two odd parts, and $\delta_\la=0$ otherwise, we can formulate equivalently:
\[
g((k+1,k-1),(k+1,k-1),\la) = d(\la)-1-\delta_\la .
\]
\end{prop}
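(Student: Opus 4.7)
The plan is to derive the decomposition of $[k+1,k-1]^2$ by restricting the known decomposition of $[k+1,k]^2$ from $S_{2k+1}$ down to $S_{2k}$. By the branching rule applied to the two removable boxes of $(k+1,k)$,
\[
[k+1,k]{\downarrow}_{S_{2k}} \;=\; [k,k]+[k+1,k-1],
\]
and since restriction is a ring homomorphism on characters,
\[
([k+1,k]^2){\downarrow}_{S_{2k}} \;=\; [k,k]^2 \;+\; 2[k,k][k+1,k-1] \;+\; [k+1,k-1]^2.
\]
The terms $[k,k]^2$ and $[k,k][k+1,k-1]$ are given by parts~(2) and~(3) of \cref{prop:2partmfKronecker}, so it remains only to expand $([k+1,k]^2){\downarrow}_{S_{2k}}$ and subtract.

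By \cref{prop:2partmfKronecker}(1) we have $[k+1,k]^2=\sum_{\la\in P_4(2k+1)}[\la]$, and by the branching rule (in reverse) the multiplicity of $[\mu]$ in this restriction equals the number of addable boxes of $\mu$ whose addition keeps the result inside $P_4$. Regarding $\mu\in P_4(2k)$ as the $4$-tuple $(\mu_1,\mu_2,\mu_3,\mu_4)$ padded with zeros, these positions correspond bijectively to the indices $1\le i\le 4$ at which a strict descent $\mu_{i-1}>\mu_i$ occurs (with the convention $\mu_0=\infty$), and their number equals the number of distinct values in $(\mu_1,\mu_2,\mu_3,\mu_4)$---that is, $d(\mu)$ in the notation of the statement. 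After subtraction, the coefficient of $[\mu]$ in $[k+1,k-1]^2$ is $d(\mu)$ diminished by~$1$ when $\mu\in E_4\cup O_4$ and by~$2$ when $\mu\in E_2O_2$.

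Since $n=2k$ is even, every $\mu\in P_4(2k)$ has an even number of odd entries in its 4-tuple, so $P_4(2k)=E_4\sqcup E_2O_2\sqcup O_4$ as a disjoint union. Thus the coefficient above is $d(\mu)-1$ for $\mu\in E_4\cup O_4$ and $d(\mu)-2$ for $\mu\in E_2O_2$, which is exactly $d(\mu)-1-\delta_\mu$ as in the equivalent formulation. To match the explicit form displayed in the statement, a short case-check on whether the two odd (resp.\ two even) parts of a partition in $E_2O_2$ coincide shows that for $\mu\in E_2O_2$ the number of sets among $\{E_2O_2',\,E_2'O_2\}$ containing $\mu$ equals $d(\mu)-2$: the four sub-cases (neither pair, only odd pair, only even pair, both pairs coinciding) yield $(d(\mu),\text{count})=(4,2),(3,1),(3,1),(2,0)$ respectively. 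The only genuine bookkeeping is this last case analysis; everything else is immediate from the branching rule and \cref{prop:2partmfKronecker}.
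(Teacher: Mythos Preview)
Your proof is correct and follows precisely the approach the paper indicates (without spelling out): the text preceding \cref{prop:2partalmostmfKronecker} remarks that the decomposition ``is also easily derived from the special formulae for $[k,k]^2$ and $[k,k]\cdot[k+1,k-1]$,'' and your restriction argument from $[k+1,k]^2$ on $S_{2k+1}$ is exactly how one combines those two formulae with \cref{prop:2partmfKronecker}(1). The paper gives no further details, so your write-up in fact supplies the omitted computation; the identification of the restriction multiplicity with $d(\mu)$ and the short case analysis on $E_2O_2$ are both correct.
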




The splitting of the square $[k+1,k-1]^2$ into its symmetric and
alternating part is given in the following result.

\begin{theorem}\label{thmk+1k-1}
For $k\in \N$, $n=2k$, we have
\[
\begin{array}{rcl}
S^2([k+1,k-1])
&=&
\dstyle
\sum_{\lambda \in E_4(n)} (d(\la)-1)[\lambda]
+\sum_{\lambda \in E_2O_2'(n)}[\lambda] \\[10pt]
A^2([k+1,k-1])
&=&
\dstyle
\sum_{\lambda \in O_4(n)}(d(\la)-1)[\lambda]
+\sum_{\lambda \in E_2'O_2(n)}[\lambda]\:.
\end{array}
\]
\end{theorem}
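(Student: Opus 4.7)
My plan is to deduce the splitting of $[k+1,k-1]^2$ from the known splitting of $[k+1,k+1]^2$ (given by Theorem~\ref{thmkksplit} with $k$ replaced by $k+1$) via the two-step branching rule. Restricting $[k+1,k+1]$ from $S_{2k+2}$ successively to $S_{2k+1}$ and $S_{2k}$ yields
\[
[k+1,k+1]{\downarrow}_{S_{2k}} = [k,k]+[k+1,k-1],
\]
since $(k+1,k+1)$ has a unique removable box in row~$2$ and $(k+1,k)$ has removable boxes in both rows. Applying Lemma~\ref{lem:SA-sum} with $U=S_{2k}$, $\chi=[k+1,k+1]$ and $X\in\{S,A\}$ gives
\[
X^2([k+1,k+1]){\downarrow}_{S_{2k}} = X^2([k,k]) + X^2([k+1,k-1]) + [k,k]\,[k+1,k-1],
\]
so $X^2([k+1,k-1])$ is a difference of known quantities: the two $X^2$-summands decompose explicitly by Theorem~\ref{thmkksplit}, and $[k,k]\,[k+1,k-1]=\sum_{\lambda\in E_2O_2(2k)}[\lambda]$ by Proposition~\ref{prop:2partmfKronecker}(3).

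The main work is to decompose $\sum_{\mu\in E_4(2k+2)}[\mu]{\downarrow}_{S_{2k}}$ (for $X=S$) and $\sum_{\mu\in O_4(2k+2)}[\mu]{\downarrow}_{S_{2k}}$ (for $X=A$). By branching, the multiplicity of $[\alpha]$ in the first sum counts pairs $(\mu,\mathsf{T})$ with $\mu\in E_4(2k+2)$, $\alpha\subset\mu$ of colength~$2$, and $\mathsf{T}$ a standard tableau of shape $\mu/\alpha$. Since $\ell(\mu)\le 4$ forces $\ell(\alpha)\le 4$, and $|\alpha|=2k$ forces $\alpha$ to have an even number of odd parts, every contributing $\alpha$ lies in exactly one of $E_4(2k)$, $E_2O_2(2k)$, $O_4(2k)$. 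For $\alpha\in E_4$, the two added boxes must form a horizontal domino (otherwise an odd part is introduced in $\mu$), and the count equals $d(\alpha)$: one for each row $i$ with $\alpha_{i-1}>\alpha_i$, plus the option of a new row of length~$2$ when $\ell(\alpha)<4$. For $\alpha\in O_4$, changing all four parts to even parity is impossible with only two boxes, so the count is zero. For $\alpha\in E_2O_2$, the only option is to add one box to each of the two odd rows; this yields a vertical-domino skew shape with a single SYT when the two odd parts coincide, and a pair of disconnected boxes with two SYTs when they differ, i.e.\ when $\alpha\in E_2O_2'$. The analysis for $O_4(2k+2)$ is identical after interchanging the roles of even and odd parts.

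Subtracting the contributions of $X^2([k,k])$ and $[k,k]\,[k+1,k-1]$ from the restriction counts above yields multiplicity $d(\alpha)-1$ on $E_4$ (resp.\ $O_4$) partitions, multiplicity~$1$ on $E_2O_2'$ (resp.\ $E_2'O_2$) partitions, and $0$ elsewhere, which is exactly the claim. The main obstacle is the casework for $\alpha\in E_2O_2$, in particular verifying that the resulting $\mu$ is always a partition with at most four parts and that the two-SYT vs.\ one-SYT dichotomy tracks $E_2O_2'$ correctly; this reduces to the parity remark that an odd part and an adjacent distinct even part always differ by at least~$1$, so no serious difficulty arises. A consistency check against the unrefined decomposition in Proposition~\ref{prop:2partalmostmfKronecker} then confirms that $S^2+A^2=[k+1,k-1]^2$.
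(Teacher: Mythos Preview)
Your argument is correct and essentially the same as the paper's: both isolate $X^2([k+1,k-1])$ via Lemma~\ref{lem:SA-sum} applied to a restriction, using Theorem~\ref{thmkksplit} and Proposition~\ref{prop:2partmfKronecker}(3) as the known inputs. The only cosmetic difference is that the paper restricts in one step from $S_{2k+1}$ (invoking the intermediate Theorem~\ref{thmk+1ksplit} for $[k+1,k]$, itself obtained from Theorem~\ref{thmkksplit} by restriction), whereas you restrict in two steps directly from $S_{2k+2}$; your two-box branching count is just the composite of the paper's two one-box counts.
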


\begin{proof}
We want to use the splitting results for
$[k,k]^2$ and $[k+1,k]^2$ in Theorem~\ref{thmkksplit} and Theorem~\ref{thmk+1ksplit}, as well as the product formula
for $[k,k]  [k+1,k]$ in Proposition~\ref{prop:2partalmostmfKronecker}(3).

For the restriction of $S^2([k+1,k])$ to $S_{2k}=S_n$ we obtain
by Theorem~\ref{thmk+1ksplit} on the one hand
\[
S^2([k+1,k])\down_{S_n} =
\sum_{\mu\in E_3O_1(n+1)}\sum_{A\in \Rem(\mu)} [\mu_A].
\]
On the other hand, by \cref{lem:SA-res} we have    that 
\[
S^2([k+1,k])\down_{S_n} =
S^2([k,k]) + S^2([k+1,k-1]) + [k,k]  [k+1,k-1] .
\]
We know the decomposition of all terms appearing here except for that of $S^2([k+1,k-1])$,
thus we can now determine the multiplicities of all constituents of $S^2([k+1,k-1])$.
First we note that only constituents $\la\in E_4(n)\cup E_2O_2(n)$ can occur.
For $\la \in E_4(n)$, we obtain
\[
\gen{S^2([k+1,k-1]),[\la]} =
|\{(\mu,A)\in E_3O_1(n+1)\times \Rem(\mu) \mid \mu_A=\la \}|-1 = d(\la) -1.
\]
Now take $\la \in E_2O_2(n)$; let $d_{\text{odd}}(\la)$ be the number of different odd parts of $\la$. Then we obtain
\[
\gen{S^2([k+1,k-1]),[\la]} =
|\{(\mu,A)\in E_3O_1(n+1)\times \Rem(\mu) \mid \mu_A=\la \}|-1 = d_{\text{odd}}(\la) -1.
\]
Hence
\[
\begin{array}{rcl}
S^2([k+1,k-1])
&=&
\dstyle
\sum_{\lambda \in E_4(n)} (d(\la)-1)[\lambda] +
\sum_{\lambda \in E_2O_2(n)} (d_{\text{odd}}(\la) -1)[\lambda]
\\[10pt]
&=&
\dstyle
\sum_{\lambda \in E_4(n)} (d(\la)-1)[\lambda] +
\sum_{\lambda \in E_2O_2'(n)}[\lambda],
\end{array}
\]
which proves our claim on $S^2([k+1,k-1])$.
As we already know the decomposition of $[k+1,k-1]^2$ into its constituents,
we immediately obtain that also the stated formula for  $A^2([k+1,k-1])$ is correct.
\end{proof}


We end the section with some further results on the splitting of squares
 to arbitrary 2-part partitions.

First we consider the distribution of hooks in the square of
characters to arbitrary 2-part partitions; note that only hook partitions
of length at most~4 can appear.

\begin{prop}\label{prop:2part-hooks}
Let $n\in \N$, $\la\in P(n)$ with $\ell(\la)=2$.
Then the hook constituents appearing in $\cla^2$
distribute as follows into the symmetric and alternating part.
\begin{enumerate}
\item
For $n\ge 3$, $\la=(n-1,1)$, we have
\[
sg(\la,(n))=1=sg(\la,(n-1,1)), \; ag(\la,(n-2,1^2))=1.
\]
\item
For $n=2k$ and $\la=(k,k)$, we have
\[
sg(\la,(n))=1, \; ag(\la,(n-3,1^3))=1.
\]
\item
For $n\ge 4$, $\la\ne (n-1,1),(k,k)$, we have
\[
sg(\la,(n))=1=sg(\la,(n-1,1), \; ag(\la,(n-2,1^2))=1=ag(\la,(n-3,1^3)).
\]
\end{enumerate}
\end{prop}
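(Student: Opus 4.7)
The plan is to split into the three cases of the statement and in each case reduce to results already in the paper. The common first step is Dvir's length bound (invoked above in the proof of Theorem~\ref{thm:lowendconst}): any constituent of $\cla^2$ has length at most $|\la\cap\la^t|=\min(n-k,2)+\min(k,2)\le 4$ for $\la=(n-k,k)$, so only the four hooks $(n)$, $(n-1,1)$, $(n-2,1^2)$ and $(n-3,1^3)$ can appear.

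Case (1) will be immediate from Theorem~\ref{thm:MW}(1) applied with $k=1$: the range $0\le m\le 2$ produces exactly the three stated multiplicities, and the length bound rules out $(n-3,1^3)$. For case (2), I will read off everything directly from Theorem~\ref{thmkksplit}: padding each candidate hook with zeros to a $4$-tuple, only $(2k,0,0,0)$ has all even parts (so lies in $E_4(n)$) and only $(2k-3,1,1,1)$ has all odd parts (so lies in $O_4(n)$), while $(n-1,1)$ and $(n-2,1^2)$ have mixed parities and belong to neither set.

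Case (3), for $2\le k<n-k$, is where the work lies. Since $\la$ has $r_1=2$ removable boxes and is not a hook, the multiplicities at $(n)$, $(n-1,1)$ and $(n-2,1^2)$ drop straight out of Theorem~\ref{thm:smalldepthconst}, giving $sg=1$, $sg=1$ and $ag=1$ respectively, with the other splits zero. For the last hook I first compute its total multiplicity $b_3$ in $\cla^2$ via Proposition~\ref{prop:smalldepthconstituents}: with $r_1=2$ the formula collapses to $b_3=r_2+r_{21}-1$, and a direct inspection of the hook lengths of $(n-k,k)$ yields $r_2+r_{21}=2$ in both subcases, namely $n\ge 2k+2$ (two linear $2$-hooks, no non-linear $3$-hook) and $n=2k+1$ (one $2$-hook in the second row plus a single L-shaped $3$-hook at the corner $(1,k)$). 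Hence $b_3=1$. To decide the split I apply Proposition~\ref{prop:hooks} to the non-hook $\la$: $\sum_\mu(sg(\la,\mu)-ag(\la,\mu))=0$ over all hooks $\mu$. Substituting the four already-known values forces $sg(\la,(n-3,1^3))-ag(\la,(n-3,1^3))=-1$, and combining with $sg+ag=b_3=1$ and non-negativity pins down $sg=0$ and $ag=1$.

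The only mildly technical point is the hook-length bookkeeping needed to verify $r_2+r_{21}=2$ in case (3); every other step is a direct invocation of an earlier result.
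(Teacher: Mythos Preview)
Your proof is correct and follows essentially the same strategy as the paper, with only cosmetic differences in which earlier results are invoked for the easy cases: you quote Theorem~\ref{thm:MW} for case~(1) and Theorem~\ref{thmkksplit} for case~(2), whereas the paper cites Proposition~\ref{prop:smalldepth} and Propositions~\ref{prop:hooks}--\ref{prop:evenhooks} respectively. For the substantive case~(3) your argument coincides with the paper's (Theorem~\ref{thm:smalldepthconst} for depth~$\le 2$, then the hook-sum identity of Proposition~\ref{prop:hooks} to pin down $(n-3,1^3)$), but you are more explicit in verifying $b_3=1$ via Proposition~\ref{prop:smalldepthconstituents}; your hook-length bookkeeping for $r_2+r_{21}=2$ in the two subcases $n=2k+1$ and $n\ge 2k+2$ is accurate. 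Note that Proposition~\ref{prop:evenhooks}, which the paper also cites, is in fact not needed once Theorem~\ref{thm:smalldepthconst} has handled $(n-2,1^2)$, so your omission of it is a small simplification.
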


\begin{proof}
By Theorem~\ref{thm:smalldepthconst} we know that $[n]$
always appears in $S^2(\cla)$ with multiplicity~1, and $[n-1,1]$
appears in $S^2(\cla)$ with multiplicity~1 if $\la\ne (k,k)$
(and is not a constituent of $\cla^2$ otherwise).
\\
(1) has already been stated in Proposition~\ref{prop:smalldepth}.
\\
(2) and (3)
Since only $(n-2,1^2)$ and $(n-3,1^3)$ need to be considered,
both assertions follow from Propositions~\ref{prop:hooks} and~\ref{prop:evenhooks}.
\end{proof}


By the general formulae we know the multiplicities of the 2-part constituents
in the square of a character to a 2-part partition; the next result tells us that none of these 2-part constituents appear in the alternating part.
For the proof we modify the proof of the special case
contained in \cite[Theorem 1.1(a)]{W-2019};
in particular, we also apply the crucial result proved in
\cite[Lemma 2.1]{W-2019}.

\begin{theorem}\label{thm:2part-A2-no2parter}
Let $n\in \N$ and $\la,\mu\in P(n)$ such that $\ell(\la),\ell(\mu) \le 2$.
Then we have
\[
ag(\la,\mu)=0.
\]
\end{theorem}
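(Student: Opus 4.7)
If $\ell(\la) = 1$ then $\la = (n)$, $A^2([\la]) = 0$, and the claim is immediate. If $\ell(\mu) = 1$ then $\mu = (n)$, and since $[\la]$ is real-valued, the Frobenius--Schur argument recalled in \Cref{sec:prelim} gives $\langle A^2([\la]), [n]\rangle = 0$. Hence we may assume $\la = (n-a,a)$ and $\mu = (n-b,b)$ with $a, b \ge 1$.

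The plan is to write $[\la]$ as a virtual combination of Young permutation characters via Young's rule, $[\la] = [M^{(n-a,a)}_\C] - [M^{(n-a+1,a-1)}_\C]$, and then apply the virtual-character identity
\[
A^2(\chi_1 - \chi_2) = A^2(\chi_1) + S^2(\chi_2) - \chi_1\chi_2,
\]
which is an immediate consequence of \Cref{lem:SA-values}. This gives
\[
A^2([\la]) = A^2(M^{(n-a,a)}_\C) + S^2(M^{(n-a+1,a-1)}_\C) - [M^{(n-a,a)}_\C] \cdot [M^{(n-a+1,a-1)}_\C].
\]
Each of the three summands will then be decomposed by an induced-representation analysis. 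The summand $S^2(M^{(n-a+1,a-1)}_\C)$ is handled directly by \Cref{youngrul}. For $A^2(M^\nu_\C)$, a parallel orbit analysis on pairs of row-standard $\nu$-tableaux -- using the sign character of the swapping $S_2$ in each $S_k \wr S_2$-block instead of the trivial character used in \Cref{youngrul} -- yields
\[
A^2(M^\nu_\C) = \bigoplus_{k=1}^{\nu_2} {\rm ind}_{S_{\nu_1-k}\times S_{\nu_2-k}\times (S_k \wr S_2)}^{S_n}\bigl(\mathbf{1}\boxtimes\mathbf{1}\boxtimes\epsilon_k\bigr),
\]
where $\epsilon_k$ is trivial on $S_k \times S_k$ and $-1$ on the swap. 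A plethystic calculation then identifies ${\rm ind}_{S_k \wr S_2}^{S_{2k}}(\epsilon_k)$ with $\bigoplus_{\alpha \in P_2(k),\,\alpha_1 > \alpha_2} S^{(2\alpha_1-1,\,2\alpha_2+1)}_\C$, namely the sum of Specht modules indexed by two-part partitions of $2k$ with two distinct odd parts.

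The final step is the combinatorial identity
\[
\langle A^2(M^{(n-a,a)}_\C), [\mu]\rangle + \langle S^2(M^{(n-a+1,a-1)}_\C), [\mu]\rangle = \langle [M^{(n-a,a)}_\C] \cdot [M^{(n-a+1,a-1)}_\C], [\mu]\rangle.
\]
Applying Frobenius reciprocity and the two-row Littlewood--Richardson rule (which collapses to a Pieri-type count) to each side reduces this to a finite sum of comparable Kostka-type numbers. The main obstacle is the careful bookkeeping for each value of $k$: one has to track how the contributions from two-part partitions of $2k$ with two even parts (from $S^2$) and with two distinct odd parts (from $A^2$) assemble to exactly the multiplicity of $[\mu]$ in the Mackey decomposition of the product on the right. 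This matches \cite[Lemma 2.1]{W-2019} essentially verbatim, and the structural reason behind the cancellation is that by the known two-row Kronecker formulae together with \Cref{thm:smalldepthconst}, every two-part constituent of $[\la]^2$ must already reside in $S^2([\la])$.
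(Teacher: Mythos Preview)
Your outline is essentially the paper's strategy in different clothing: both expand $[\la]=\pi_a-\pi_{a-1}$ (your $M^{(n-a,a)}_\C$ carries the permutation character $\pi_a$), apply the virtual-character identity to obtain $A^2([\la])=A^2(\pi_a)+S^2(\pi_{a-1})-\pi_a\pi_{a-1}$, and then reduce to \cite[Lemma~2.1]{W-2019}. Your orbit decompositions for $A^2(M^\nu_\C)$ and $S^2(M^\nu_\C)$ are correct, with one slip: ``two \emph{distinct} odd parts'' is false when $k$ is odd (for $k=3$ the induction contains $S^{(3,3)}_\C$); your parametrisation $(2\alpha_1-1,2\alpha_2+1)$ with $\alpha_1>\alpha_2$ is right, but the parenthetical description is not.

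There is, however, a genuine gap at the final step. \cite[Lemma~2.1]{W-2019} is stated for the pairing $\langle\pi_j,\cdot\rangle$ and only in the range $l\le j$; it does not give $\langle[\mu],\cdot\rangle$ for arbitrary $\mu=(n-b,b)$ ``essentially verbatim''. Writing $[\mu]=\pi_b-\pi_{b-1}$, you would need the lemma at $j=b$ and $j=b-1$, and when $b\le a$ these lie outside its stated range. The paper closes this with a positivity argument you have omitted: since $\pi_j=\sum_{k\le j}\chi_k$ and each $\langle\chi_k,A^2([\la])\rangle\ge 0$, it suffices to show $\langle\pi_j,A^2([\la])\rangle=0$ for the single value $j=\lfloor n/2\rfloor\ge a$, which \emph{is} covered by the lemma, and then every individual term must vanish. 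Without this step, or an independent direct Littlewood--Richardson count that you do not actually carry out, your appeal to \cite{W-2019} is incomplete.

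Finally, your closing sentence (``every two-part constituent of $[\la]^2$ must already reside in $S^2([\la])$'') is exactly the statement being proved; \Cref{thm:smalldepthconst} only handles constituents of depth at most~$2$ and cannot serve as the ``structural reason'' for the general cancellation.
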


\begin{proof}
If $\ell(\la)=1$  the result is trivial.
So from now on we will assume that $\ell(\la)=2$.
For $k$ with $0\le 2k\le n$ we set $\chi_k=[n-k,k]$
and $\chi_k^A=A^2([n-k,k])$.
The proof of our assertion is now equivalent to
showing that for all $1\le l \le n/2$ and $0\le k\le n/2$:
\[
\gen{\chi_k,\chi_l^A}=0.
\]

For $0\le j\le n/2$, we let $\pi_j$ be the character to
the permutation character of the natural action of $S_n$ on
$j$-element subsets of  $\{1,\ldots,n\}$.
By \cite{Saxl} we know that $\pi_j$ decomposes as
\[\pi_j = \sum_{k=0}^j \chi_k \:.\]
Now for all $k\le j$,
$\gen{\pi_j,\chi_l^A}\ge \gen{\chi_k,\chi_l^A} \ge 0$,
hence if we can show that $\gen{\pi_j,\chi_l^A}=0$ for some $j$,
then $\gen{\chi_k,\chi_l^A}=0$ for all $k\le j$.

We now set out to prove $\gen{\pi_j,\chi_l^A}=0$ for all $l,j$ such that
$1\le l\le j\le n/2$
(of course, $j=\lfloor n/2 \rfloor$ would be sufficient, but the
argument does not become easier);
this will then imply $\gen{\chi_k,\chi_l^A}=0$ for all $k\le n/2$.
Using $\chi_l=\pi_l-\pi_{l-1}$ and the explicit formula for $\chi_l^A$,
this amounts to showing for $l\le j \le n/2$:
\[
0=\sum_{g\in S_n} \pi_j(g) (\pi_l(g)^2 +\pi_{l-1}(g)^2-2\pi_l(g)\pi_{l-1}(g)-\pi_l(g^2)+\pi_{l-1}(g^2)).
\]
Now, as observed in \cite{W-2019}, the character values of the permutation characters
on $g^2$ can be rewritten using suitable character values on $g$.
Namely, if $\pi_l^{\{2\}}$ is the permutation character to the action of $S_n$
on unordered pairs of $l$-element subsets of
$\{1,\ldots,n\}$, then
\[
\pi_l(g^2)=2 \pi_l^{\{2\}}(g)-\pi_l(g)^2 + 2\pi_l(g) \:.
\]
Using this, our rephrased aim now is to show:
\[
0=\textstyle\sum_{g\in S_n} \pi_j(g) \left(\pi_l(g)^2 -\pi_l(g)\pi_{l-1}(g)-\pi_l^{\{2\}}(g)
+\pi_{l-1}^{\{2\}}(g)-\pi_l(g)+\pi_{l-1}(g)\right).
\]
This is equivalent to seeing that
\[
I_{j,l} := \gen{\pi_j,\pi_l^2 -\pi_l\pi_{l-1}-\pi_l^{\{2\}}
+\pi_{l-1}^{\{2\}}-\pi_l+\pi_{l-1}}=0.
\]
From the decomposition of the $\pi_i$'s, we obtain immediately (using $1\le l\le j$):
\[
I_{j,l} = \gen{\pi_j,\pi_l^2 -\pi_l\pi_{l-1}-\pi_l^{\{2\}}
+\pi_{l-1}^{\{2\}}}-1.
\]
Now we use \cite[Lemma 2.1]{W-2019}; this says that for $1 \le l \le j\le n/2$ we have
\[
\gen{\pi_j,\pi_l^2 -\pi_l\pi_{l-1}-\pi_l^{\{2\}}
+\pi_{l-1}^{\{2\}}} =1 .
\]
Thus we conclude
that  $
I_{j,l} = 0 \; \text{for } 1 \le l \le j\le n/2,
$
and hence our result is proved.
\end{proof}

\begin{rem}{\rm
We note that the analogous question for partitions of width at most~2
(i.e., with largest part at most~2)
is much less interesting as all constituents in  $[\la]^2$, for $\la$ of width at most~2,
have length at most~4. Hence width~2 constituents can only occur for $n\le 8$, and we
can answer the question by simple computations.
Indeed, from the results on the constituents to $(1^n)$ and $(2,1^{n-2})$,
we know when they occur as constituents in $A^2(\cla)$; 
there are no further exceptional cases.
In summary,
$ag(\la,\mu)=0$ for $\la,\mu$ of width at most~2, except for
$(\la,\mu)$ being one of
$((2,1),(1^3))$, $((2,1^2),(2,1^2))$, $((2^2),(1^4))$, $((2^2,1),(2,1^3))$.
Note that the second case is the only one of a width~2 partition $\la$
with $ag(\la,\la)>0$; this is \cite[Theorem 1.1(b)]{W-2019}.
}
\end{rem}

\begin{rem}
It would be very interesting if one could obtain a complete description of the $S^2([\la])$ and $A^2([\la])$  for  $\la\in P_2(n)$.  
\end{rem}

    \section{Irreducible and homogeneous symmetric and antisymmetric products}

The proof of the following theorem utilises many of our earlier results, including   the analysis of small depth and hook cases (which in turn depended on our introduction of 2-modular techniques).

\begin{theorem}
Any symmetric product $S^2([\la])$ for $\la \in P(n)$ is (reducible and) inhomogeneous unless $\la$ is a linear partition.  
Any anti-symmetric product $A^2([\la])$ for $\la \in P(n)$ is (reducible and) inhomogeneous unless $\la=(n), (n-1,1)$, $(2^2)$, or  $(3^2)$  (up to conjugation).  

We have that $A^2([n-1,1]) = [n-2,1^2]$ is irreducible for all $n\ge 3$; furthermore 
  $A^2([2^2])=[1^4]$ and 
$A^2([3^2])=[3,1^3]$.
\end{theorem}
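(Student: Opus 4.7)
The plan is to split the theorem into the symmetric statement, the three irreducible antisymmetric squares, and the residual antisymmetric inhomogeneities, with the last of these closed by a short induction on $n$.

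For the symmetric half, observe that every irreducible complex character of $S_n$ is rational, so its Frobenius--Schur indicator equals $1$; equivalently, from Section~\ref{sec:prelim}, $[n]$ is a constituent of every $S^2([\lambda])$ with multiplicity exactly $1$. If $\lambda$ is non-linear then $f(\lambda) \geq 2$, so $\dim S^2([\lambda]) = \binom{f(\lambda)+1}{2} \geq 3$; this forces a second constituent, and $S^2([\lambda])$ is reducible and inhomogeneous.

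The three irreducible antisymmetric squares drop straight out of our earlier formulas: Proposition~\ref{prop:smalldepth} gives $A^2([n-1,1]) = [n-2,1^2]$, while Theorem~\ref{thmkksplit} gives $A^2([2^2]) = [1^4]$ and $A^2([3^2]) = [3,1^3]$ via $O_4(4) = \{(1^4)\}$ and $O_4(6) = \{(3,1^3)\}$. The ``up to conjugation'' clause is handled by the identity $A^2([\lambda^t]) = A^2([\lambda])$, which follows at once from Lemma~\ref{lem:SA-values} together with $[\lambda^t](g) = \mathrm{sgn}(g)\cdot[\lambda](g)$ and $\mathrm{sgn}(g^2) = 1$. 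For the remaining non-exceptional $\lambda$ I proceed by shape. Non-exceptional hooks $(n-k,1^k)$ with $2 \leq k \leq n-3$ yield both $[n-2,1^2]$ and $[n-3,1^3]$ in $A^2$ directly from Theorem~\ref{thm:MW} (the values $m=2$ and $m=3$ both lie in the allowed range $0 \leq m \leq 2\min(k,n-k-1)$). Two-part non-hooks $(a,b)$ with $a > b \geq 2$ give the same pair of constituents by Proposition~\ref{prop:2part-hooks}, while $\lambda = (k,k)$ with $k \geq 4$ follows from Theorem~\ref{thmkksplit} since $|O_4(2k)| \geq 2$. Non-exceptional rectangles $(a^b)$ with $a, b \geq 3$ are covered by Proposition~\ref{rectangleresult}, which places $[n-3,1^3]$ in $A^2([a^b])$ together with depth-$\geq 4$ contributions whose non-vanishing is forced by the dimension inequality $\binom{f(a^b)}{2} > \binom{n-1}{3}$.

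The residual case consists of $\lambda$ with $\ell(\lambda) \geq 3$, $\lambda_2 \geq 2$, neither two-part nor rectangular (and WLOG $\lambda_1 \geq \ell(\lambda)$, using $A^2([\lambda^t]) = A^2([\lambda])$); this is closed by induction on $n$. Here $r_1(\lambda) \geq 2$, so Theorem~\ref{thm:smalldepthconst} places $[n-2,1^2]$ into $A^2([\lambda])$; to finish, it is enough to find any further constituent, which must then have depth $\geq 3$ because $[n-2,1^2]$ is the only possible depth-$\leq 2$ constituent in any antisymmetric square. I pick a removable box $B$ of $\lambda$ with $\lambda_B$ non-linear and non-exceptional---a short case check shows this is always possible, the only borderline shapes being $(3,3,1)$ and $(3,2,2)$, each of which admits such a $B$. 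The inductive hypothesis then gives a constituent $[\nu]$ of $A^2([\lambda_B])$ with $\nu_1 \leq n-4$; Lemma~\ref{lem:SA-sum} places $[\nu]$ inside $A^2([\lambda]){\downarrow}_{S_{n-1}}$, and by the branching rule $[\nu]$ can arise only from a constituent $[\mu]$ of $A^2([\lambda])$ with $\mu_1 \leq n-3$, as required. The main obstacle will be the (finicky but finite) verification of the good box $B$ at the borderline shapes together with the base cases $n \leq 7$ at which the induction grounds.
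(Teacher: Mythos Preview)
Your proof is correct in spirit and covers all the cases, but it takes a genuinely different route from the paper in two places, and leaves one small gap.

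For rectangles $(a^b)$ with $a,b\ge 3$, the paper does not use dimensions: it fixes a long-cycle class $\alpha=(a+b,1^{\ldots})$ or $(a+b+1,1^{\ldots})$ (chosen to have odd parts), observes via Murnaghan--Nakayama that $[a^b]$ vanishes on this class (no hook of the right length) whence $A^2([a^b])$ vanishes there too, while $[ab-3,1^3]$ does not vanish; hence $A^2([a^b])\neq [ab-3,1^3]$. Your dimension inequality $\binom{f(a^b)}{2}>\binom{ab-1}{3}$ is certainly true and gives the same conclusion, but you assert it without proof; a one-line justification (say via the hook-length formula and a crude bound, or by noting that already the restriction $A^2([a^{b-1},a-1])$ has dimension far exceeding $\binom{ab-2}{3}+\binom{ab-2}{2}$) would close this.

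For the residual case your induction via Lemma~\ref{lem:SA-sum} is sound and the borderline check at $(3,3,1)$, $(3,2,2)$ is complete. The paper instead avoids induction entirely: since $\la$ is not a hook, Proposition~\ref{prop:hooks} gives $\gen{A^2(\cla),\chi_{\text{hook}}}=\gen{S^2(\cla),\chi_{\text{hook}}}$, and the latter is at least $1+(r_1-1)+\binom{r_1-1}{2}=\binom{r_1}{2}+1$ by Theorem~\ref{thm:smalldepthconst}; as $[n-2,1^2]$ contributes only $\binom{r_1}{2}$ hook constituents to $A^2(\cla)$, there must be a second hook constituent. This is shorter and showcases the $2$-modular machinery of Section~\ref{sec:rel-to-dec}, whereas your inductive argument is more self-contained (it does not need $\chi_{\text{hook}}$ or Proposition~\ref{prop:hooks}) at the cost of the removable-box case analysis. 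Both are valid; the paper's argument is a natural payoff of its earlier development, while yours would be the route of choice in a context without those tools.
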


\begin{proof}
That the  listed products  have the stated form can be easily checked by hand for the $n=4$ and $6$ cases and 
$A^2([n-1,1]) = [n-2,1^2]$ follows from    \cref{prop:smalldepth}.    
We now turn to the main part of the theorem, verifying that these are the only homogeneous products.  We begin with the symmetric case.  
Assume that  $\la$ is a non-linear partition.  
  For all $\la \in P(n)$, the trivial partition always labels   a constituent of $S^2([\la])$ with multiplicity equal to 1.
    Thus we need only note that 
 the  degree of the character of the  symmetric square is $\tfrac{1}{2}(\chi^2(e)+\chi(e))>1$ and so there must be some other, non-isomorphic, constituent  as required.

We now turn to the harder case of the anti-symmetric Kronecker products.  
If $\la  \in P(n)$ and $n\leq 9$ we can check the result by hand, and so we now assume 
   that $n>9$ and $\la$ is not of the form $\la=(n-1,1)$, $(2^2)$,  $(3^2)$, or  a linear partition.
    If $\la$ is a hook partition   then  $A^2([\la])$  contains all constituents of the form $(n-m,1^m)$ for $m\equiv 2,3 $ modulo $4$ (by Theorem \ref{thm:MW}) and so the result follows.   
If $  \la\in P_2(n)$ and $\la \neq (n/2,n/2)$, then the result follows from Proposition \ref{prop:2part-hooks}(iii) and if $\la=(n/2,n/2)$ then the result follows from Theorem \ref{thmkksplit}.

We now consider the case that  $\la=(a^b)$ is a rectangle for some $a,b>2$.  We already know that $\langle A^2[(a^b)] \mid [ab-3,1^3]\rangle =1$ and so it suffices to show that 
$A^2[(a^b)] \neq [ab-3,1^3]$.  To do this, we need only find a conjugacy class on which the characters do not coincide.  
We set $\alpha=(a+b,1^{ab-a-b})$ if $a+b$ is odd and $\alpha=(a+b+1,1^{ab-a-b-1})$ if $a+b$ is even.  
Let $g\in S_{ab}$ be of cycle type $\alpha$.  
Since all parts of $\alpha$ are odd, we have that $\chi(g)=\chi(g^2)$ for any character $\chi$ of $S_n$.  
All hooks of $(a^b)$ are strictly smaller than $a+b$ and so,   by the Murnaghan--Nakayama rule,  
 $ [(a^b)] (g) =0=  	 [(a^b)] (g^2)	$ and this implies that $A^2[(a^b)](g)=0$.  
 On the other hand, $3<a+b, a+b+1<ab-3$ and so 	 $[ab-3,1^3](g)=[ab-3-|\alpha|,1^3](1^{ab-|\alpha|})\neq 0$, again by the Murnaghan--Nakayama rule.  Thus the characters do not agree, as required.

We now assume that $\la$ is not of any one of the above forms (so $\la$ has at least 2 removable nodes and is not a hook or 2-line partition).   
We set $r_1=|{\rm Rem}(\la)|\geq 2$, by assumption.  
We have that    
$A^2(\la)$ contains $[n-2,1^2]$ with multiplicity ${r_1 \choose 2}\geq1$ by \cref{thm:smalldepthconst}.  
We will show that 
 $\langle A^2([\la]) \mid \chi_{\rm hook}\rangle > {r_1 \choose 2}$ 
and hence deduce that $A^2([\la])$ has another, distinct, hook constituent;  the result will then follow.  
Since $\la$ is not itself a hook, we can put together 
\Cref{prop:hooks,thm:smalldepthconst} and hence deduce that 
\begin{align*} 
\langle A^2([\la]) \mid \chi_{\rm hook}\rangle=
&		\langle S^2([\la]) \mid \chi_{\rm hook}\rangle
= 1+ (r_1-1)+  {\textstyle{r_1-1 }\choose 2}  + \dots 
\end{align*}
 where $\dots$ denotes the contribution from hook constituents of depth greater than 2.  Now, we have that 
 $$
  1+ (r_1-1)+  {\textstyle{r_1-1 }\choose 2} =
  \frac{2r_1+(r_1^2-3r_1+2)}{2}
  =
    \frac{ r_1^2-r_1+2}{2}
    >
        \frac{ r_1^2-r_1 }{2}= {r_1 \choose 2}
  $$as required.  
\end{proof}

\begin{rems}{\rm
In \cite{MM}, Malle and Magaard consider
the symmetric and alternating parts of the tensor squares
of irreducible modules for the alternating groups in all characteristics,
and they classify when these are irreducible.
}\end{rems}


  \section{Multiplicity-free symmetric and alternating parts}\label{sec:mf-parts}

Multiplicity-free Kronecker products have been
classified in \cite{BeBo}; fortunately, the
classification of the multiplicity-free Kronecker squares
is much easier, and we recall this here.

\begin{prop}\cite[Proposition 4.1]{BeBo}  \label{prop:squares}
Let $\lambda$ be a partition of~$n$.
Then $[\la]^2$ is multiplicity-free
if and only if $\la$ is one of the following (up to conjugation):
$$(n), \; (n-1,1), \;  (\left\lceil\frac n2 \right\rceil , \left\lfloor  \frac n2 \right\rfloor)\:.$$
  \end{prop}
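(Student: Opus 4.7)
The plan has two directions.

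\textbf{Sufficiency.} Each listed family has its Kronecker square already computed in the paper. $[n]^2=[n]$ is trivial; summing the two halves of \Cref{prop:smalldepth} gives $[n-1,1]^2=[n]+[n-1,1]+[n-2,2]+[n-2,1^2]$ for $n\ge 4$; and \Cref{prop:2partmfKronecker}(1)--(2) writes $[\lceil n/2\rceil,\lfloor n/2\rfloor]^2$ as a sum of pairwise distinct irreducibles indexed by $P_4(n)$ or $E_4(n)\cup O_4(n)$. All three decompositions are manifestly multiplicity-free.

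\textbf{Necessity.} Let $\la\in P(n)$ be off the list (up to conjugation); the main tool is \Cref{prop:smalldepthconstituents}, giving the multiplicities $a_2=r_2+r_1(r_1-2)$ of $[n-2,2]$ and $b_2=(r_1-1)^2$ of $[n-2,1^2]$, where $r_1$ is the number of removable boxes and $r_2$ the number of $2$-hooks. I split on $r_1$. If $r_1\ge 3$, then $b_2\ge 4$ and we are done. If $r_1=2$, write $\la=(a^p,b^q)$ with $a>b\ge 1$ and $p,q\ge 1$; enumerating positions of hook length $2$ gives $r_2=[p\ge 2]+[q\ge 2]+[a\ge b+2]+[b\ge 2]$, so $a_2\ge 2$ unless $r_2\le 1$, and a short case analysis shows that $r_2\le 1$ forces $\la$ (up to conjugation) to be $(n-1,1)$ or $(\lceil n/2\rceil,\lfloor n/2\rfloor)$. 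If $r_1=1$, then $\la=(a^p)$ is a rectangle; the subcase $\min(a,p)\le 2$ places $\la$ (up to conjugation) on the list.

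The remaining case, and the main obstacle, is a true rectangle $\la=(a^p)$ with $a,p\ge 3$. Here $r_2=2$, $r_3=3$, $r_{21}=1$, and combining \Cref{prop:smalldepthconstituents} with \Cref{rectangleresult} shows that every depth-$\le 3$ constituent of $[\la]^2$ appears with multiplicity exactly one; a genuine depth-$\ge 4$ argument is needed. My plan is to establish the base case $(3,3,3)$ by direct computation in $\Irr(S_9)$, exhibiting a specific depth-$4$ irreducible $[\mu]$ that appears in both $S^2([(3,3,3)])$ and $A^2([(3,3,3)])$, so that $[\mu]$ sits in $[(3,3,3)]^2$ with multiplicity at least two. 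To propagate to $\la=(a^p)$ with $a,p\ge 3$, I would apply \Cref{prop:Ressayre} and an anti-symmetric analogue (which should follow from Ressayre's geometric proof verbatim) to both $S^2$ and $A^2$ separately: writing $\la=(3^3)+\eta$ with $\eta$ a suitable rectangle, one obtains a constituent of both $S^2([\la])$ and $A^2([\la])$, hence a repeated constituent of $[\la]^2$. The subtlest points are verifying the common-constituent step in the base case and the availability of the $ag$-semigroup; should either fail, the fallback is to invoke the depth-$4$ formulae of Vallejo \cite{V} (alluded to before \Cref{prop:smalldepthconstituents}) to identify an explicit depth-$4$ constituent of multiplicity $\ge 2$ in $[(a^p)]^2$ directly.
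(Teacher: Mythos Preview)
The paper does not supply its own proof of this proposition; it is quoted as a known result from \cite[Proposition~4.1]{BeBo}. So there is no in-paper argument to compare against directly, though the proof of \Cref{prop:mfsym-and-alt} just below gives strong hints about how \cite{BeBo} handles the rectangle case.

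Your sufficiency direction and your necessity analysis for $r_1\ge 2$ are correct and clean; the enumeration of $r_2\le 1$ shapes with two removable boxes does land exactly on the listed partitions up to conjugation. The genuine gap is in the rectangle case $\la=(a^p)$ with $a,p\ge 3$. Three problems: first, $(a^p)-(3^3)$ is a partition only when $p=3$, so your semigroup step already fails at the level of writing $\la=(3^3)+\eta$ once $p\ge 4$; second, \Cref{prop:Ressayre} is stated (and, to my knowledge, only proved) for $sg$, so invoking an $ag$-analogue is speculative; third, even granting both semigroups, you would also need the auxiliary $\eta$ to have a constituent common to $S^2([\eta])$ and $A^2([\eta])$, which fails for instance when $\eta$ is a single row or column.

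The detour through $sg$ and $ag$ is unnecessary. The well-known monotonicity for \emph{ordinary} Kronecker coefficients --- if $g(\alpha,\alpha,\gamma)>0$ then $g(\la+\alpha,\la+\alpha,\nu+\gamma)\ge g(\la,\la,\nu)$ --- suffices, and this is precisely what the paper invokes in the proof of \Cref{prop:mfsym-and-alt}. A direct computation (also recorded there) gives $g((3^3),(3^3),(5,2^2))=2$; for $(a^3)$ with $a\ge 4$ write $(a^3)=(3^3)+((a-3)^3)$ and apply monotonicity. For general $(a^p)$ with $a,p\ge 3$, pass to the transpose (using $g(\la,\la,\nu)=g(\la^t,\la^t,\nu)$) to reach a three-row rectangle, then add $((a-3)^p)$ back. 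This avoids all three issues above; your Vallejo fallback would also work but is heavier than needed.
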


Obviously, in the cases arising above, both $S^2(\cla )$ and $A^2(\cla )$ are multiplicity-free,
and we have seen in all cases how the Kronecker square
decomposes into the symmetric and alternating part. 
In fact we find:

\begin{prop} \label{prop:mfsymalt}
Let $n\in \N$.
\begin{enumerate}
\item[{(1)}]
If $\la$ (or its conjugate) is one of the partitions
\[
(n), \; (n-1,1), \;  (\left\lceil\frac n2 \right\rceil , \left\lfloor  \frac n2 \right\rfloor)
\]
then $S^2(\cla)$ is multiplicity-free.

\item[{(2)}]
If $\la\in P(n)$ is such that $S^2(\cla)$ is multiplicity-free then $\la$ (or its conjugate) is a rectangle or one of the partitions
\[
(n-1,1), \text{or } (k+1,k) \: \text{with } n=2k+1.
\]

\item[{(3)}]
If $\la$ (or its conjugate) is one of the partitions
\[
\begin{array}{l}
\dstyle
(n), \; (n-1,1), \;  (n-2,2), \; (n-2,1^2), \;
(\left\lceil\frac n2 \right\rceil , \left\lfloor  \frac n2 \right\rfloor),
\\[5pt]
\text{or one of the exceptional partitions } (5,3), (3^3),
\end{array}
\]
then $A^2(\cla)$ is multiplicity-free (or zero).

\item[{(4)}]
If $\la\in P(n)$ is such that $A^2(\cla)$ is multiplicity-free then
\color{black} $\la$ has at most 2 removable nodes.
\end{enumerate}
\end{prop}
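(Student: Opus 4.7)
The plan is to prove the contrapositive: if $\la$ has at least three removable nodes, then $A^2([\la])$ fails to be multiplicity-free. The key observation is that \Cref{thm:smalldepthconst} already gives us, essentially for free, an explicit formula for the multiplicity of the depth-2 constituent $[n-2,1^2]$ in $A^2([\la])$ in terms of the number $r_1$ of removable boxes of $\la$.

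More precisely, recall from \Cref{thm:smalldepthconst} that for $n\ge 4$ we have
\[
ag(\la,(n-2,1^2)) = b_{2,A} = \binom{r_1}{2}.
\]
If $r_1\ge 3$, then $\binom{r_1}{2}\ge 3$, so $[n-2,1^2]$ appears in $A^2(\cla)$ with multiplicity at least~3; in particular $A^2(\cla)$ is not multiplicity-free. This immediately yields the contrapositive.

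It remains only to check the boundary cases $n\le 3$, where \Cref{thm:smalldepthconst} does not apply. This is trivial: every partition $\la\in P(n)$ with $n\le 3$ has at most two removable boxes (the maximum being attained by $(2,1)$), so the conclusion holds vacuously.

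I expect no real obstacle here, since the entire argument reduces to a direct application of the formula $b_{2,A}=\binom{r_1}{2}$ already established in \Cref{thm:smalldepthconst}. The only point worth emphasising is that, in contrast to part (2) of the proposition, part (4) gives only a necessary condition and not a full classification; indeed the bound $r_1\le 2$ is far from sufficient, as one can see by comparing with the explicit list in part~(3).
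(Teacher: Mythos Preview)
Your proof is correct and follows exactly the same approach as the paper, which simply states that parts~(2) and~(4) are immediate consequences of \Cref{thm:smalldepthconst}. You have merely spelled out the details: the formula $b_{2,A}=\binom{r_1}{2}$ forces a repeated constituent $[n-2,1^2]$ once $r_1\ge 3$, and the small cases $n\le 3$ are vacuous.
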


\begin{proof}
Parts (1) and (3): By the comments above and the results in this section,
we have already seen that for $(n), \; (n-1,1),
(\left\lceil\frac n2 \right\rceil , \left\lfloor  \frac n2 \right\rfloor)$
(and their conjugates) the Kronecker square is multiplicity-free,
and for the partitions $(n-2,2)$ and $(n-2,1^2)$ (and their conjugates)
the alternating part of the square is multiplicity-free.
The case $\la=(5,3)$ follows from Theorem~\ref{thm:depth3},
and the remaining case $\la=(3^3)$ was checked by direct
computation. 
%
%
%
%
%

Parts (2) and (4) are an immediate consequence of Theorem~\ref{thm:smalldepthconst}.
\end{proof}

 We are now able to classify the situations in which both
the symmetric and   alternating parts
are multiplicity-free:

\begin{theorem} \label{prop:mfsym-and-alt}
Let $n\in \N$, $\la\in P(n)$.
Then both $S^2(\cla)$ and $A^2(\cla)$ are multiplicity-free if and only if
$\la$ (or its conjugate) is one of the partitions
\[
(n), \; (n-1,1), \;  (\left\lceil\frac n2 \right\rceil , \left\lfloor  \frac n2 \right\rfloor).
\]
\end{theorem}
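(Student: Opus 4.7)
The ``if'' direction is immediate: for each $\lambda$ in the stated list, \cref{prop:squares} gives that the full Kronecker square $[\lambda]^2$ is multiplicity-free, so both $S^2([\lambda])$ and $A^2([\lambda])$ are multiplicity-free.

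For the converse, suppose that both $S^2([\lambda])$ and $A^2([\lambda])$ are multiplicity-free. By \cref{prop:mfsymalt}(2), $\lambda$ (up to conjugation) must be a rectangle $(a^b)$, or $(n-1,1)$, or $(k+1,k)$ with $n=2k+1$. The last two families (with their conjugates $(2,1^{n-2})$ and $(2^k,1)$) are already in the target list. Among rectangles, the linear shapes $(n)=(n^1)$ and $(1^n)$ lie in the list, and for $n=2k$ so do $(k,k)$ and its conjugate $(2^k)$. The only remaining case to exclude is therefore a rectangle $\lambda=(a^b)$ with $a\geq 3$ and $b\geq 3$.

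For such $\lambda$, the plan is to exhibit a constituent of $S^2([\lambda])$ with multiplicity at least two. By \cref{rectangleresult}, the depth-$\leq 3$ part of $S^2([\lambda])$ is $[ab]+[ab-2,2]+[ab-3,3]$, each appearing with multiplicity one, so the desired repetition must occur at depth $\geq 4$. The base case $\lambda=(3,3,3)$ is handled by a direct character computation in $S_9$ using \cref{lem:SA-values} together with the explicit decomposition of $[3,3,3]^2$: one verifies that a specific depth-4 constituent of $S^2([3,3,3])$ occurs with multiplicity two. For general $\lambda=(a^b)$ with $a,b\geq 3$, I would extend the branching argument underlying \cref{rectangleresult} by one further step, restricting $S^2([\lambda])$ to the Young subgroup $S_{n-4}\times S_4$ via \cref{lem:SA-sum,lem:SA-prodgroups}, and then isolating the $[\mu_0]\times[\alpha]$-isotypic component for a well-chosen $\mu_0\vdash n-4$ of small depth and $\alpha\vdash 4$. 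A natural target candidate is a constituent of the form $[ab-4,2^2]$ (or another depth-4 shape supported by the rectangular restriction), whose multiplicity one can read off via a Littlewood--Richardson count of pairs $(\mu,\nu)\in P_{(a^b)}(4)^2$ contributing to the chosen isotypic summand.

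The principal obstacle is the combinatorial bookkeeping at depth~4. At depth~3 the proof of \cref{rectangleresult} succeeds because only the single pair $(\mu,\nu)=((2,1),(2,1))$ contributes non-trivially to the cross-term supplied by \cref{lem:SA-sum}; at depth~4 several distinct pairs contribute simultaneously, and the symmetric-square and cross-term contributions must be carefully balanced to avoid spurious cancellation. The hard part is to verify that these contributions combine to a multiplicity strictly greater than one rather than reducing to exactly one; once the correct target constituent is identified, the remaining calculation is a direct (albeit intricate) Littlewood--Richardson count mirroring the proof of \cref{rectangleresult}, with the genuine repetition arising from two inner decompositions $(\mu,\nu)$ both producing the same outer constituent of $S^2([\lambda])$.
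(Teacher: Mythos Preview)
Your ``if'' direction and the reduction to rectangles $(a^b)$ with $a,b\ge 3$ match the paper exactly. The divergence --- and the gap --- is in how you handle those remaining rectangles.

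You propose to locate a specific depth-$4$ constituent of $S^2([\la])$ with multiplicity $\ge 2$ by extending the branching computation of \cref{rectangleresult} one step further. You explicitly acknowledge that the bookkeeping is ``intricate'' and that ``the hard part is to verify that these contributions combine to a multiplicity strictly greater than one''; you do not carry this out. That is a genuine gap: for a general rectangle nothing in your proposal actually establishes the required repeated constituent, and there is no guarantee that the target you name, $[ab-4,2^2]$, even has multiplicity $\ge 2$ in $S^2$ rather than splitting $1+1$ across $S^2$ and $A^2$.

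The paper's argument is much shorter and avoids this difficulty entirely. The observation you are missing is that you do not need to pin down \emph{which} part carries the repetition: it is enough that the full square $[\la]^2$ contain some constituent with multiplicity at least~$3$, since then by pigeonhole one of $S^2,A^2$ already has multiplicity $\ge 2$. For $\la=(3^3)$ one simply computes $\gen{S^2([3^3]),[5,2^2]}=\gen{[3^3]^2,[5,2^2]}=2$. For larger rectangles (taking $\ell(\la)\ge 4$ after conjugating), set $\tilde\la=\la\cap(3^4)\in\{(3^3,1),(3^3,2),(3^4)\}$; in each of these three small cases $[\tilde\la]^2$ has a constituent of multiplicity~$3$, and the monotonicity of Kronecker coefficients then forces a constituent of multiplicity $\ge 3$ in $[\la]^2$. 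No depth-$4$ restriction analysis is needed.

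So your reduction is correct, but your proposed endgame is both unfinished and unnecessarily hard; the route through ``multiplicity $\ge 3$ in the full square plus monotonicity'' closes it in a few lines.
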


\begin{proof}
We already know that for the partitions $\la$ listed in the assertion the
symmetric and alternating part of $\cla^2$ are multiplicity-free.

Conversely, we now assume that both parts  of $\cla^2$ are multiplicity-free,
but that $\la$
is not of one of the listed forms.
 By Proposition~\ref{prop:mfsymalt},
we only have to consider rectangular partitions $\la$
of diagonal length at least~3.
 
 Since $\la$ is not a hook, the proof of \cite[Proposition 4.1]{BeBo}
 immediately shows that $\cla^2$ has a constituent of multiplicity at least~3, unless possibly $(3^3)\subseteq \la$.  Thus (at least) one of $S^2(\cla)$ and $A^2(\cla)$  has a constituent with multiplicity greater than or equal to 2.  
 
For $\la = (3^3)$, the product $[\la]^2$ does not contain any constituent with multiplicity equal to 3, however we can directly compute $\gen{S^2([3^3]),[5,2^2]} = \gen{([3^3])^2,[5,2^2]} =2$.

We can now assume that $\la \supset (3^3)$ and $\ell(\la)\ge 4$.
 Set $\tilde\la = \la \cap (3^4)$; then $\tilde\la=(3^3,j)$, $j\in \{1,2,3\}$, and in all these cases $[\tilde\la]^2$ has a constituent of multiplicity~3,
 hence by the monotonicity of Kronecker coefficients,
 $\cla^2$ has a constituent of multiplicity at least~3.
\end{proof}


Computational data 
lead to the following stronger classification
conjecture.

\begin{conj}\label{prop:mfsym-and-alt2}
The partitions listed in Proposition~\ref{prop:mfsymalt}(1) and (3)
are all the partitions
where the symmetric or the alternating part of the square is multiplicity-free.
\end{conj}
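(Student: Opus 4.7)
The plan is to split the conjecture into its two one-sided classifications and attack each by sharpening the corresponding partial statement in Proposition~\ref{prop:mfsymalt}.

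For the symmetric part: by Proposition~\ref{prop:mfsymalt}(2), if $S^2([\la])$ is multiplicity-free then $\la$ (up to conjugation) is a rectangle, $(n-1,1)$, or $(k+1,k)$. The latter two already appear in list~(1), and among rectangles $(a^b)$ with $\min(a,b)\leq 2$ one has (up to conjugation) $\la\in\{(n),(k,k)\}$, both in list~(1) by Theorem~\ref{thmkksplit}. It remains to rule out every rectangle $(a^b)$ with $a,b\geq 3$. The base case $\la=(3^3)$ is already dispatched in the proof of Theorem~\ref{prop:mfsym-and-alt}, which gives $\gen{S^2([3^3]),[5,2^2]}=2$. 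For $\la=(a^b)\supsetneq (3^3)$ with $a,b\geq 3$ I would aim to exhibit an explicit $\mu$ with $sg(\la,\mu)\geq 2$, by combining Ressayre's semigroup property (Proposition~\ref{prop:Ressayre}) with this base case.

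For the alternating part: by Proposition~\ref{prop:mfsymalt}(4), $\la$ has at most two removable nodes, so (up to conjugation) $\la$ is either a rectangle $(a^b)$ or a two-step shape $(a^p,c^r)$ with $a>c\geq 1$. Among rectangles, only $(n)$, $(k,k)$ and $(3^3)$ should give a multiplicity-free $A^2$; the remaining rectangles $(a^b)$ with $a,b\geq 3$ and $(a,b)\neq (3,3)$ would be ruled out by checking $(3^4)$ directly and propagating to larger rectangles via monotonicity. For two-step partitions I would split by shape. Two-row partitions $(n-k,k)$ with $k\geq 3$ are handled by Theorem~\ref{thm:depth3}, which shows $[n-4,3,1]$ appears in $A^2([n-3,3])$ with multiplicity $2$ for $n\geq 9$; larger $k$ would follow by an induction using \Cref{lem:SA-sum}. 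Hook partitions $(n-k,1^k)$ with $2\leq k\leq n-3$ are covered directly by Theorem~\ref{thm:MW}, whose double-hook multiplicity-$2$ entries are explicit. The remaining genuine fat hooks $(a^p,c^r)$ with $p,r\geq 2$ and $c\geq 2$ are reduced via restriction and \Cref{lem:SA-sum} to the rectangular and two-row cases, with finitely many small exceptions checked directly.

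The main obstacle is propagating multiplicity bounds: Ressayre's semigroup property preserves only positivity, so from $sg((3^3),(5,2^2))=2$ one does not directly obtain $sg((a^b),\nu)\geq 2$ for some $\nu$ and some larger rectangle. I would try to circumvent this by combining the ordinary Kronecker monotonicity argument (already used in the proof of Theorem~\ref{prop:mfsym-and-alt} to deduce $g(\la,\la,\mu)\geq 3$ whenever $\la\supsetneq (3^3)$ and $\ell(\la)\geq 4$) with sign information from the class function $[\la]^{(2)}=\chi_S-\chi_A$ studied in Section~\ref{sec:rel-to-dec}: when $g(\la,\la,\mu)\geq 3$ and additionally $\gen{[\la]^{(2)},[\mu]}\geq 1$ (respectively $\leq -1$), one immediately obtains $sg(\la,\mu)\geq 2$ (respectively $ag(\la,\mu)\geq 2$). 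Evaluating $[\la]^{(2)}$ at a class of cycle type $(2\alpha_1,2\alpha_2,\dots)$ via the Murnaghan--Nakayama rule should make such sign control feasible for generic rectangles and fat hooks, after which a bounded number of small-size exceptions can be cleared by direct computation.
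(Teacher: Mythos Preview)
This statement is a \emph{conjecture} in the paper, not a theorem: the paper offers no proof, only computational evidence and the remark (following the conjecture) that Theorem~\ref{thm:MW} already excludes further hooks. The paper proves only the weaker Theorem~\ref{prop:mfsym-and-alt}, where \emph{both} parts are required to be multiplicity-free. So there is no ``paper's own proof'' to compare against; your proposal is an attempt to settle an open problem.

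As a strategy, your outline is reasonable but has genuine gaps that are not merely matters of writing things out.

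\textbf{Symmetric side.} You correctly isolate the obstacle: Ressayre's semigroup property (Proposition~\ref{prop:Ressayre}) gives only positivity, so it cannot transport $sg((3^3),(5,2^2))=2$ to larger rectangles. Your proposed workaround---combine Kronecker monotonicity $g(\la,\la,\mu)\ge 3$ with a sign bound $\gen{[\la]^{(2)},[\mu]}\ge 1$ to force $sg(\la,\mu)\ge 2$---is formally correct, but the two conditions are needed for the \emph{same} $\mu$. Monotonicity (as in the proof of Theorem~\ref{prop:mfsym-and-alt}) produces a specific $\mu$ inherited from the $(3^3)$ or $(3^4)$ case; there is no mechanism linking that $\mu$ to any sign information about $\gen{[\la]^{(2)},[\mu]}$. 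Your suggestion to evaluate $[\la]^{(2)}$ on classes of doubled cycle type via Murnaghan--Nakayama amounts to controlling values of $[\la]$ on $2$-singular classes, which involves removing $2$-rim-hooks from a rectangle; this does not obviously yield uniform sign control for a $\mu$ you can also match to the monotonicity argument. This is the heart of the problem and your proposal does not resolve it.

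\textbf{Alternating side.} The reduction for genuine fat hooks $(a^p,c^r)$ via restriction and Lemma~\ref{lem:SA-sum} is not as clean as you suggest. Restricting $A^2([\la])$ to $S_{n-1}$ produces, in addition to the $A^2([\la_B])$ terms, cross-products $[\la_B][\la_C]$; a multiplicity appearing downstairs in one of these products says nothing about multiplicities in $A^2([\la])$ itself. You would need an argument that isolates a constituent of $A^2([\la])$ of multiplicity $\ge 2$ \emph{upstairs}, and the restriction formula alone does not give this. Similarly, for two-row $\la=(n-k,k)$ with $k\ge 4$, the inductive step via Lemma~\ref{lem:SA-sum} runs into the same cross-term issue. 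Finally, ``finitely many small exceptions checked directly'' presupposes a uniform bound beyond which your reductions bite; no such bound is established.

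In short: your plan identifies the right case division and the right obstacle, but the proposed mechanism for propagating multiplicity $\ge 2$ (in both directions) is speculative, and the restriction arguments do not control multiplicities in the way you need. The conjecture remains open.
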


\begin{rems}
{\rm
(1)
If the conjecture holds, then a multiplicity-free symmetric part $S^2(\cla)$
implies that $\cla^2$ is multiplicity-free, while a
multiplicity-free alternating part $A^2(\cla)$
implies that the coefficients in $\cla^2$ are at most~2.

(2) From Theorem~\ref{thm:MW} we already know that no further
hooks can appear in the classification.

%
}\end{rems}


  \section{Splitting the square: Refining the Saxl Conjecture}\label{sec:Saxl-refined}


As we want to discuss refinements of the Saxl conjecture,
we first formally recall the conjecture:

\begin{conj}\label{conj:Saxl}(Saxl's conjecture.)
For any $k\in \N$, the Kronecker square  $[\rho_k]^2$ contains \emph{ all } $[\la] \in \Irr(S_n)$ as constituents.
\end{conj}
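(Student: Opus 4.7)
The plan is to attack Saxl's conjecture via its sharper form (\emph{Conjecture C} of this paper), which locates every irreducible of $S_n$ inside the symmetric part $S^2([\rho_k])$ rather than merely inside the full square $[\rho_k]^2$. By Theorem D (\Cref{cor:rho}) we have $sg(\la,\rho_k)=ag(\la,\rho_k)$ for every $\la\ne\rho_k$ and $sg(\rho_k,\rho_k)=1$, so the symmetric refinement already implies the original conjecture (with the bonus $g(\la,\la,\rho_k)\ge 2$ whenever $\la\ne\rho_k$). The whole problem thus reduces to the positivity of $sg(\la,\rho_k)$ for each $\la\ne\rho_k$.

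First I would collect the pieces already supplied by the paper for specific strata. Hook and double-hook constituents are handled by \Cref{prop:hooks,prop:evenhooks} together with the hook square splitting \Cref{thm:MW}; sign and near-sign constituents by \Cref{thm:GIP,thm:lowendconst}; small-depth constituents (depth $\le 3$) by \Cref{thm:smalldepthconst,thm:depth3}; small-width strata follow by the involution $\la\mapsto\la^t$. The engine for propagation to interior $\la$ would be the semigroup property of \Cref{prop:Ressayre}: if $sg(\al,\be)>0$ at rank $n_1$ and $sg(\la',\mu')>0$ at rank $n_2$ with $n_1+n_2=n$, then $sg(\al+\la',\be+\mu')>0$, so positivity can be bootstrapped from smaller $n$ whenever compatible splittings of both $\la$ and $\rho_k$ are available.

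The complementary angle, and the reason the present paper's machinery is especially relevant here, is the 2-projective bridge of \Cref{sec:rel-to-dec}. \Cref{lem:basicobs,prop:basicobs} say that $[\la]^{(2)}=S^2([\la])-A^2([\la])$ is pinned down by 2-modular data, and since $\rho_k$ is a 2-core the character $[\rho_k]$ is 2-projective (this is the content of Theorem D). One would try to design, for each $\la$, a virtual 2-projective character $\Phi_\la$ with $\gen{\Phi_\la,[\rho_k]}>0$ but $\gen{\Phi_\la,[\mu]}\le\gen{[\la]^2,[\mu]}$ for all $\mu$ in the 2-block of $\rho_k$; such a $\Phi_\la$ would certify $sg(\la,\rho_k)>0$ via a pairing in the spirit of \Cref{lem:basicobs}, and would simultaneously sharpen the linear relations between 2-decomposition numbers noted after \Cref{prop:basicobs}.

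The main obstacle is, unsurprisingly, that $\rho_k$ does not decompose as a sum $\rho_{k_1}+\rho_{k_2}$ of smaller staircases, so the semigroup argument alone cannot propagate the conjecture from smaller $n$ to larger $n$. One must either replace it by a positivity transfer allowing near-staircase summands with controlled correction terms, or solve the 2-projective design problem above uniformly in $\la$. I expect the hard core of any genuine proof to be a \emph{generic} $\la$ that is neither a hook, nor a near-rectangle, nor admits a splitting matching any splitting of $\rho_k$; for such $\la$ a new construction---presumably of 2-projective virtual characters built from induced or tensored modules attached to $\la$, combined with block-orthogonality as in \Cref{cor:2blockS_n,cor:2blockS_n-smallweight}---appears to be required, which is precisely why Saxl's conjecture has resisted all previous approaches.
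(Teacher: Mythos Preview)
The statement you are attempting to prove is Saxl's \emph{conjecture}; the paper does not prove it and explicitly presents it as open. There is therefore no paper proof to compare your proposal against, and your own text is not a proof either but a strategy sketch that concludes by acknowledging the core obstruction remains unresolved. That is a legitimate thing to write, but it should not be framed as a proof proposal.

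Beyond this, your reduction contains a genuine error. You write that the problem ``reduces to the positivity of $sg(\la,\rho_k)$ for each $\la\ne\rho_k$'', but $sg(\la,\rho_k)$ is the multiplicity of $[\rho_k]$ in $S^2([\la])$, whereas Conjecture~C and Saxl's conjecture concern $sg(\rho_k,\la)$ and $g(\rho_k,\rho_k,\la)$, the multiplicity of $[\la]$ in $S^2([\rho_k])$ and in $[\rho_k]^2$ respectively. The symmetric Kronecker coefficient is \emph{not} symmetric in its two arguments (for instance $sg((n-1,1),(n-2,1^2))=0$ while $sg((n-2,1^2),(n-1,1))=1$ by \Cref{prop:smalldepth}), so Theorem~D, which controls $sg(\la,\rho_k)-ag(\la,\rho_k)$, says nothing directly about $sg(\rho_k,\la)$. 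Conjecture~C implies Saxl trivially because $g(\rho_k,\rho_k,\la)=sg(\rho_k,\la)+ag(\rho_k,\la)\ge sg(\rho_k,\la)$; no appeal to Theorem~D is needed for that implication. Much of the subsequent toolkit you assemble (\Cref{prop:hooks,prop:evenhooks,thm:smalldepthconst}) likewise gives information about constituents of $S^2([\la])$ for varying $\la$, not about constituents of the fixed square $S^2([\rho_k])$, so the strategy as written is aimed at the wrong quantity.
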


As mentioned in the introduction,
there is a conjecture (not restricted to triangular numbers)
due to Heide, Saxl, Tiep and Zalesski \cite{HSTZ},
which says that
for any $n\ne 2,4,9$ there is some character $\psi\in\Irr(S_n)$
such that $\psi^2$ contains all $\chi \in \Irr(S_n)$ as constituents.
The Saxl conjecture suggests a special candidate for triangular numbers $n$.
Apart from computational results for (relatively) small $k$,
there are already many contributions towards this conjecture that
confirm that several families of constituents occur in the square
(see e.g.\ \cite{B-spinSaxl, BB, I-2015, PP, PPV}).


We have seen above that we may even refine the Saxl conjecture
for certain constituents, as we have found the location (or multiplicities)
of some constituents
in the symmetric or alternating part, respectively.
For example, apart from the constituents at the extreme ends, we had found
in Corollary~\ref{cor:rho} that $[\rho_k]$ is
a constituent of the symmetric part of~$[\rho_k]^2$.

Of course, we cannot expect that $A^2([\rho_k])$
contains all irreducible $\chi\in \Irr(S_n)$
as it does not contain the trivial character.
But computational data suggest that while
both characters $S^2([\rho_k])$ and $A^2([\rho_k])$
do not contain all irreducible
characters as constituents,
the number of missing irreducible characters is (perhaps surprisingly)
small.

By Theorem~\ref{thm:GIP}, $S^2([\rho_k])$ does not have $[1^n]$ as a constituent
when $k\equiv 2 \mod 4$.
Also, we know that $A^2([\rho_k])$ never contains $[n],[n-1,1]$ and $[n-2,2]$,
and it does not contain $[1^n]$ when $k\not\equiv 2 \mod 4$.

Based on computational data we suggest the following
strengthening of Saxl's conjecture:

\begin{conj} {\bf (Refinement of the Saxl Conjecture)}
The symmetric part  $S^2([\rho_k])$ of the square $[\rho_k]^2$
contains all irreducible characters $[\la]$ of $S_n$ as constituents,
except for the character $[1^n]$ when $k\equiv 2 \mod 4$.

The alternating part  $A^2([\rho_k])$ of the square $[\rho_k]^2$
contains almost all irreducible characters $[\la]$ of $S_n$ as constituents,
 with the only missing characters being $[n]$, $[n-1,1]$ and $[n-2,2]$, and  furthermore $[1^n]$ when $k\not\equiv 2 \mod 4$,
 and in addition $[2^3]$ when $k=3$.
\end{conj}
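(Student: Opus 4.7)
The plan is to reduce the conjecture to Saxl's conjecture itself together with a sharp control of the virtual character $[\rho_k]^{(2)}$ defined by $g\mapsto [\rho_k](g^2)$. For $\la\in P(n)$ set $N_\la = g(\rho_k,\rho_k,\la)$ and $D_\la = \gen{[\rho_k]^{(2)},[\la]}$; then by (\ref{ppppp2}) one has $sg(\rho_k,\la) = \tfrac 12(N_\la+D_\la)$ and $ag(\rho_k,\la) = \tfrac 12(N_\la-D_\la)$, so both parts of $[\rho_k]^2$ contain $[\la]$ as soon as $N_\la > |D_\la|$. I would therefore first analyse $D_\la$ as completely as possible, and then combine the result with lower bounds on $N_\la$ coming from the existing literature on Saxl constituents.

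The first step is a closed description of $[\rho_k]^{(2)}$. Since $\rho_k$ is a 2-core, $[\rho_k]$ is the unique character of its 2-block and vanishes on 2-singular classes. Moreover, for 2-regular $g$ the elements $g$ and $g^2$ have the same cycle type, so $[\rho_k]^{(2)} - [\rho_k]$ is supported on 2-singular classes; on such a class $g$, the value $[\rho_k](g^2)$ is accessible by the Murnaghan--Nakayama rule applied to the squared cycle type. The aim is to turn this into a formula, ideally a plethystic or Young-rule expansion for $[\rho_k]^{(2)}$, from which a manageable bound $|D_\la|\leq B(k)$ follows with $B(k)$ growing at most polynomially in~$k$, and a small combinatorial set $\Sigma_k$ of ``exceptional'' partitions $\la$ with $D_\la\neq 0$ is identified.

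For $\la \notin \Sigma_k$ the refined conjecture collapses to Saxl's original conjecture, and is already known on the many families of Saxl constituents found in \cite{B-spinSaxl, I-2015, PPV}: hooks, near-rectangles, and partitions generated from these via the semigroup property of \cref{prop:Ressayre}. For $\la\in\Sigma_k$ one needs genuine lower bounds $N_\la > |D_\la|$. The plan is to restrict $X^2([\rho_k])$ to $S_{n-1}$ using \cref{lem:SA-res}: each summand of $[\rho_k]{\downarrow}_{S_{n-1}}$ is obtained by removing one of the $k$ corner boxes of $\rho_k$ and is again a near-staircase, so an inductive argument, combined with the semigroup property, should yield multiplicities exceeding $B(k)$. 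The exceptional list in the statement then drops out: $[n],[n-1,1],[n-2,2]$ are excluded from $A^2$ by \cref{thm:smalldepthconst}, the behaviour of $[1^n]$ is exactly \cref{thm:GIP} and its corollary, and the single sporadic case $[2^3]$ at $k=3$ is a finite check.

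The principal obstacle is the first step: turning the qualitative observation that $[\rho_k]^{(2)}-[\rho_k]$ is supported on 2-singular classes into a quantitative bound on $D_\la$ sharp enough to be beaten by the available lower bounds on $N_\la$. Without such a bound, one cannot rule out near-cancellations between $N_\la$ and $D_\la$ that would cause some $[\la]$ to appear only in $S^2([\rho_k])$ or only in $A^2([\rho_k])$, contrary to the conjecture. A secondary difficulty is that the refinement is strictly stronger than Saxl, so any progress on it either presupposes further Saxl-type results or produces them as a by-product; a combined attack exploiting both sides of this dependency seems necessary.
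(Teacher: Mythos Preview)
The statement you are attempting to prove is a \emph{conjecture} in the paper, not a theorem; the paper offers no proof. It is explicitly introduced with the words ``Based on computational data we suggest the following strengthening of Saxl's conjecture,'' and the surrounding discussion only records what is known about a handful of specific constituents (the extremes $[n]$, $[1^n]$, $[n-1,1]$, $[n-2,2]$, and $[\rho_k]$ itself) together with the computational evidence. There is therefore nothing to compare your proposal against.

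As to the proposal itself: you have correctly identified that it cannot be a proof. Your own final two paragraphs say as much. The refined conjecture is strictly stronger than Saxl's conjecture, which remains open; any argument that established the refinement would in particular establish Saxl, and your plan explicitly presupposes Saxl-type positivity results that do not yet exist in the required generality. Concretely, your reduction $sg(\rho_k,\la)>0$ and $ag(\rho_k,\la)>0$ iff $N_\la>|D_\la|$ is correct, but the step ``turn the support condition on $[\rho_k]^{(2)}-[\rho_k]$ into a quantitative bound $|D_\la|\le B(k)$ beaten by known lower bounds on $N_\la$'' is precisely the hard content, and you give no mechanism for it. The Murnaghan--Nakayama observation tells you where $[\rho_k]^{(2)}$ is supported, not how large its inner products with arbitrary $[\la]$ can be, and there is no reason to expect a polynomial bound uniform in $\la$. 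What you have written is a reasonable outline of a research programme, not a proof; and the paper's authors, who formulated the conjecture, evidently did not see how to carry it out either.
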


As stated earlier, Saxl's conjecture is a refinement in the case of triangular numbers $n$ of a more general conjecture due to Heide, Saxl, Tiep and Zalesski \cite{HSTZ}, which says that for any $n\ne 2,4,9$  
there is some $\la\in P(n)$ such that $\cla^2$ contains all irreducible characters of $S_n$ as constituents.

In fact, the computational data suggest that also this ``HSTZ''-conjecture has a strengthening; remember that $[1^n]$ is a constituent of $\cla^2$ if and
only if $\la$ is symmetric, and then we know its position in $S^2(\cla)$ or
$A^2(\cla)$ from Theorem~\ref{thm:GIP}.



\begin{conj}{\bf (Refinement of the HSTZ-Conjecture)}
For any $n\ge 10$ there is some $\la \in P(n)$
such that the symmetric part  $S^2(\cla)$ of the square $\cla^2$
contains all irreducible characters of $S_n$ as constituents,
except possibly for the character $[1^n]$.


For any $n\ge 10$ there is some $\mu \in P(n)$ such that the
 alternating part  $A^2(\cmu)$ of the square $\cmu^2$
contains almost all irreducible characters of $S_n$ as constituents,
 with the only missing characters being $[n]$, $[n-1,1]$ and $[n-2,2]$, and  furthermore possibly $[1^n]$.


Furthermore,
for any $n\ge 10$  there is always a symmetric partition $\la$ that is optimal
for both $S^2(\la)$ and  $A^2(\la)$ (in the sense above).

Also, for a symmetric partition $\la$ that is optimal for $A^2(\cla)$,
i.e., missing out only the 3 or 4 stated constituents,
the square $\cla^2$ contains all irreducible characters.
\end{conj}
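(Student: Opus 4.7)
The plan is to reduce the conjecture to a finite base case together with an asymptotic construction: verify a starting range (say $10 \le n \le N_0$) by direct computation, then for each $n > N_0$ exhibit an explicit candidate partition whose Kronecker square has the required symmetric and alternating coverage. The most natural candidates are symmetric partitions close to the staircase $\rho_k$: when $n=k(k+1)/2$ is triangular one hopes $\rho_k$ itself works, and when $n$ is not triangular one takes $\rho_k$ perturbed by a small self-conjugate shape so that the resulting $\la$ is still symmetric and has diagonal length $dl(\la)$ of the parity dictated by \cref{thm:GIP} (so that the unique occurrence of $[1^n]$ in $\cla^2$ lands in the part that needs it).

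For the triangular case, I would build directly on the refined Saxl conjecture stated just above. Granting that refinement, $\rho_k$ is simultaneously optimal for $S^2([\rho_k])$ and $A^2([\rho_k])$ up to the listed exceptions. The fourth assertion of the conjecture---that a symmetric $\la$ which is $A^2$-optimal forces $\cla^2$ to contain every irreducible---is then immediate: the unique $[1^n]$ missing from $A^2(\cla)$ must appear in $S^2(\cla)$ by \cref{thm:GIP} together with $\la=\la^t$, while the omissions $[n], [n-1,1], [n-2,2]$ from $A^2$ are supplied by $S^2(\cla)$ for every non-linear $\la$ via \cref{thm:smalldepthconst}. Thus parts three and four reduce to producing, for each $n \ge 10$, a single symmetric $\la$ that is $A^2$-optimal.

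For non-triangular $n = k(k+1)/2 + r$, I would transport coverage from $\rho_k$ to $\la = \rho_k \cup \mu$ or $\la = \rho_k + \mu$ (with $\mu=\mu^t$ small) using the semigroup property \cref{prop:Ressayre}; this already handles all constituents of bounded depth and, by monotonicity, recovers the non-exceptional irreducibles once the base case is established. For the ``boundary'' constituents near $[n]$ and $[1^n]$ that the semigroup argument cannot reach directly, the 2-modular machinery of \cref{sec:rel-to-dec}, and in particular the block-weighted identities in \cref{cor:2blockS_n,cor:2blockS_n-smallweight}, would constrain the splitting of any irreducible $[\mu]$ between $S^2(\cla)$ and $A^2(\cla)$: a systematic absence of a specific $[\mu]$ from one part propagates as a linear relation among 2-decomposition numbers which can be ruled out case by case using the known projective characters $\Phi^\nu$.

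The principal obstacle is that Saxl's conjecture itself is open, so the triangular-case input is conditional; worse, the sharpness claim (that \emph{exactly} the listed characters are missing from $A^2([\rho_k])$) requires simultaneously showing positivity for the vast majority of $[\mu]$ and vanishing for the short exceptional list. The vanishing statements for $[n],[n-1,1],[n-2,2]$ in $A^2$ are handled by \cref{thm:smalldepthconst}, and the $[1^n]$ case by \cref{thm:GIP}, but the positivity requirement appears to demand genuinely new non-vanishing theorems for antisymmetric Kronecker coefficients, which currently exist only for hooks, rectangles, and the $2$-part case $[k,k]$ treated in \cref{thmkksplit}. Closing this gap---in particular ruling out sporadic missing irreducibles from $A^2(\cla)$ for large $n$---looks to me like the deepest part of the problem.
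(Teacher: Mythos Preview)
The statement you are addressing is a \emph{conjecture} in the paper, not a theorem; the paper offers no proof and presents it purely as a strengthening of the HSTZ conjecture suggested by computational data. So there is nothing to compare your proposal against.

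Your proposal is a strategy sketch rather than a proof, and you correctly identify the main obstruction yourself: the triangular-case input depends on the refined Saxl conjecture, which is open, and the positivity side (showing that $A^2(\cla)$ contains \emph{all} irreducibles except the short list) requires nonvanishing results for antisymmetric Kronecker coefficients that do not currently exist in the generality needed. A few more specific points: the semigroup property in \cref{prop:Ressayre} is stated only for the \emph{symmetric} coefficients $sg(\la,\mu)$, so it does not directly transport $A^2$-coverage as you suggest; and monotonicity of ordinary Kronecker coefficients does not descend to $sg$ and $ag$ separately, so the step ``by monotonicity, recovers the non-exceptional irreducibles'' is not justified. The reduction of the fourth assertion to \cref{thm:GIP} and \cref{thm:smalldepthconst} is correct as far as it goes, but that assertion is still conditional on the existence of an $A^2$-optimal symmetric $\la$, which is exactly the hard part. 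In short, your outline isolates the right difficulties but does not close any of them, which is consistent with the conjecture's status in the paper.
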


We note explicitly that not every symmetric partition $\la$
with the property that $\cla^2$ contains all irreducible
characters as its constituents is optimal for both $S^2(\cla)$
and $A^2(\cla)$. For example, $(4^2,2^2)$ is
(in the sense above) not optimal for both parts of its square,
but $(5,3,2,1^2)$ and $(6,2,1^4)$ both are ``doubly optimal''.

    \bigskip 
\noindent{\bf Acknowledgments.}
 This research was conducted over  several enjoyable summers 
      spent by the second author in  Hannover, made possible by funding from the 
 Alexander von Humboldt Foundation    and EPSRC early career grant EP/V00090X/1.   
The second author is  grateful to 
 Harm Derksen for making available his code for computing (anti-)symmetric Kronecker coefficients and to Christian Ikenmeyer for 
 teaching him how to use it.  
 We also thank Mike Zabrocki for providing the outline of an alternative  proof of  Theorem B, due to 
  John Stembridge.

\medskip


\end{document}